\newtheorem{thm}{Theorem}[section]
\newtheorem*{thm*}{Theorem}
\newtheorem{lemma}[thm]{Lemma}
\newtheorem{proposition}[thm]{Proposition}
\newtheorem{corollary}[thm]{Corollary}
\newtheorem{claim}[thm]{Claim}
\newtheorem*{claimstar}{Claim}
\theoremstyle{definition}
\newtheorem{definition}[thm]{Definition}
\newtheorem{parg}[thm]{}
\theoremstyle{remark}
\newtheorem{remark}[thm]{Remark}
\newtheorem{example}[thm]{Example}
\newcommand{\la}{\longrightarrow}
\newcommand{\ma}{\mathcal}
\newcommand{\ph}{\varphi}
\newcommand{\pr}{\mathbb{P}}
\newcommand{\ol}{\mathcal{O}}
\newcommand{\Z}{\mathbb{Z}}
\newcommand{\Q}{\mathbb{Q}}
\newcommand{\R}{\mathbb{R}}
\newcommand{\C}{\mathbb{C}}
\newcommand{\N}{\ma{N}_1}
\newcommand{\Sing}{\operatorname{Sing}}
\newcommand{\Pic}{\operatorname{Pic}}
\newcommand{\NE}{\operatorname{NE}}
\newcommand{\Exc}{\operatorname{Exc}}
\newcommand{\Chow}{\operatorname{Chow}}
\newcommand{\Lo}{\operatorname{Locus}}
\newcommand{\codim}{\operatorname{codim}}
\newcounter{property}
\begin{document}
\title{Quasi elementary contractions of Fano manifolds}
\author{Cinzia Casagrande}
\email{cinzia.casagrande@unipv.it}
\address{Dipartimento di Matematica\\Universit\`a di Pisa\\largo B.\
Pontecorvo 5\\56127 Pisa - Italy}
\curraddr{Dipartimento di Matematica\\Universit\`a di Pavia\\via 
Ferrata 1\\27100 Pavia - Italy}
\classification{14J45, 14E30}
\keywords{Fano varieties, Mori theory, Picard number}
\shortauthors{C.\ Casagrande}
\begin{abstract}
Let $X$ be a smooth complex Fano variety. We study ``quasi
 elementary'' contractions of fiber type of $X$, 
which are a natural generalization of elementary contractions of fiber
 type. If $f\colon X\to Y$ is such a contraction, then the Picard
 numbers satisfy
$\rho_X\leq\rho_Y+\rho_F$, where 
$F$ is a general fiber of $f$. 
We show that if $\dim Y\leq 3$  and $\rho_Y\geq 4$,
 then $Y$ is smooth and Fano; if moreover $\rho_Y\geq 6$, then $X$
 is a product. This yields sharp bounds on $\rho_X$ when $\dim X=4$ and $X$
 has a quasi elementary contraction of fiber type, 
and other applications in higher
 dimensions.  
\end{abstract}
\maketitle
\section{Introduction}\label{intro}
A Fano manifold is a smooth complex projective variety $X$ with ample
anticanonical class. These varieties have a rich structure, and play an
important role in higher dimensional geometry under the viewpoint of
Mori theory, because they appear as fibers of Mori contractions of
fiber type of smooth projective varieties.

After \cite{campana,KoMiMo}
we know that 
$X$ is rationally connected and simply connected.
Moreover smooth Fano
varieties of dimension $n$ form a limited family, so they 
have only a finite number of possible topological types.
However in general very little is known
about their topological invariants.

In particular we consider here the second Betti number $b_2$ of $X$, 
which coincides with the Picard  number
$\rho_X$. 

Recall that a Del Pezzo surface $S$ has $\rho_S\leq 9$. Fano $3$-folds
have been classified by Iskovskikh, Mori, and Mukai, see
\cite{fanoEMS} and references therein. Thus
we know that a Fano
$3$-fold $X$ has $\rho_X\leq 10$. In fact, more is true: as soon as
$\rho_X\geq 6$, $X$ is a product of a Del Pezzo surface with $\pr^1$
\cite[Theorem 2]{morimukai}.

Starting from dimension $4$, we do not have a bound on $\rho_X$.
 The known examples with largest Picard number are
just products of Del Pezzo surfaces with Picard number
$9$, which gives $\rho_X=\frac{9}{2}n$. 

Optimistically one could think that Fano varieties with large Picard
number are simpler, maybe a product of lower dimensional
varieties. This would yield a linear bound (in the dimension $n$)
for $\rho_X$, in fact
one could expect precisely $\rho_X\leq\frac{9}{2}n$ (see \cite[p.\
122]{debarre}).  

This is actually what happens in the toric case: if $X$ is a smooth
toric Fano variety of dimension~$n$, then $\rho_X\leq 2n$, and
equality holds if and only if $n$ is even and $X$ is
$(S)^{\frac{n}{2}}$, $S$ the blow-up of $\pr^2$ in three non collinear
points (see
\cite{vertices}). 

Let us also recall that there is a class of Fano varieties for which a
stronger linear bound on $\rho_X$ is expected. These are Fano
varieties which do not contain curves of anticanonical degree $1$,
e.g.\ Fano varieties of index $\geq 2$. In this case it is expected
that $\rho_X\leq n$, with equality only for $(\pr^1)^n$. This is a
generalization of a conjecture by Mukai, and has been proved in
dimension $n\leq 5$ and in the toric case, see
\cite{mukai,occhettaGM,vertices}. 

\medskip

A strategy in this direction is to look for a contraction $f\colon
X\to Y$ of fiber
type on $X$, and try to bound $\rho_X$ in terms of $\rho_Y$ and
$\rho_F$, where $F$ is a general fiber. There are (at least) two
difficulties in this approach. First, $Y$ may not be Fano, so that
we do not know how to bound $\rho_Y$. Second, surely
$F$ is Fano, but in general $\rho_F$ is much smaller than
$\rho_X-\rho_Y$
(for instance any Del Pezzo surface $S$ admits a
contraction $S\to\pr^1$ with general fiber $\pr^1$). This problem
could be avoided by considering only elementary contractions of fiber
type, for which $\rho_X=1+\rho_Y$. 
However this is not very satisfactory, because we do not
necessarily expect a Fano variety with large Picard number to have an
elementary contraction of fiber type (think of a product of Del Pezzo
surfaces).

This was our motivation to introduce the notion of ``quasi
elementary''  contraction of fiber type. This is a
contraction of fiber type as above, such that if $i\colon
F\hookrightarrow X$ is a general fiber, then the image of $i_*\colon
\N(F)\to\N(X)$ contains all numerical classes of curves contracted by $f$
(see Definition \ref{quasiel}). This implies that
$\rho_X\leq\rho_Y+\rho_F$.
In particular
any elementary contraction of fiber type is quasi elementary.
If a contraction is a smooth morphism (such as a projection $Y\times
F\to Y$), then it is
quasi elementary, see Lemma \ref{smoothness}.

We study several properties of
quasi elementary contractions of smooth Fano varieties, in particular
when the target has small dimension. The following is our main result.
\begin{thm}
\label{result}
Let $X$ be a smooth complex
Fano variety, $f\colon X\to Y$
a quasi elementary contraction of fiber type, and $F$ a general fiber.
\begin{enumerate}[$(i)$]
\item
Suppose that $\dim Y=2$. 
Then $Y$ is a smooth Del Pezzo surface, $f$ is equidimensional, and
$\rho_X\leq\rho_Y+\rho_F\leq 9+\rho_F$.

If moreover $\rho_Y\geq 3$, then
$X\cong Y\times F$ and $f$ is the projection
on
the first factor. 
\item
Suppose that $\dim Y=3$. Then $\rho_X\leq
\rho_Y+\rho_F\leq
10+\rho_F$.

If moreover $\rho_Y\geq 4$, then $Y$ is smooth and Fano.

If $\rho_Y\geq 6$, then $X\cong S\times W$ and $Y\cong S\times\pr^1$,
where $S$ is a Del Pezzo surface, and $W$ a smooth Fano variety 
 with a quasi elementary contraction onto $\pr^1$.
\end{enumerate}
\end{thm}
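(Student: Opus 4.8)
The plan is to prove this by induction on $\dim Y$, treating the three assertions of each part as successive refinements and using the key numerical inequality $\rho_X\le\rho_Y+\rho_F$ that comes for free from the quasi elementary hypothesis. The overall architecture is to first settle the surface case $(i)$ directly, then bootstrap the $3$-fold case $(ii)$ by finding an elementary contraction on $Y$ and studying the induced structure on $X$.

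\medskip

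\textbf{The surface case $(i)$.} First I would show $Y$ is a smooth Del Pezzo surface. Since $f$ is a contraction of fiber type from a Fano manifold, $Y$ is normal and $\Q$-factorial with rational singularities, and a general fiber $F$ is Fano hence rationally connected; one should be able to see $Y$ is rationally connected and, being a normal surface with mild singularities and $\rho_Y$ bounded, is in fact a Del Pezzo surface. The smoothness and equidimensionality I expect to follow from the fact that fibers of $f$ cannot be too degenerate: a general fiber has class in the image of $i_*\colon\N(F)\to\N(X)$, and any jumping of fiber dimension over a surface would produce curves in the boundary whose classes escape this image, contradicting the quasi elementary condition. Given equidimensionality and $Y$ smooth, the bound $\rho_X\le\rho_Y+\rho_F\le 9+\rho_F$ is immediate from $\rho_Y\le 9$ for Del Pezzo surfaces. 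For the rigidity statement when $\rho_Y\ge 3$, the plan is to show every fiber is isomorphic to $F$ and that the family is trivial; concretely, I would argue that $f$ has no multiple fibers and is a smooth morphism, so by Lemma \ref{smoothness} combined with the large Picard number forcing the monodromy to be trivial, $X$ splits as $Y\times F$. The point where $\rho_Y\ge 3$ enters is that it rules out the non-split elementary contractions (conic bundles and the like) that exist for $\rho_Y\le 2$.

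\medskip

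\textbf{The $3$-fold case $(ii)$.} The bound $\rho_X\le\rho_Y+\rho_F\le 10+\rho_F$ rests on showing $\rho_Y\le 10$, which I expect to deduce from the classification of Fano $3$-folds once $Y$ is known to be Fano; but to avoid circularity I would instead establish the Fano-ness of $Y$ under $\rho_Y\ge 4$ first. The strategy here is to pick an extremal ray of $Y$ and pull back its contraction to $X$: because $f$ is quasi elementary, a contraction $g\colon Y\to Z$ lifts to a contraction of $X$, and the composite $X\to Z$ is again quasi elementary of fiber type with target of dimension $\le 2$, so part $(i)$ applies to the general fiber of $X\to Z$. Running this for all extremal rays of $Y$ and using that $\rho_Y\ge 4$ forces enough independent fiber-type contractions should show $-K_Y$ is ample, i.e.\ $Y$ is smooth Fano. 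For the final structure statement when $\rho_Y\ge 6$, I would invoke the Mori--Mukai result \cite{morimukai} that a Fano $3$-fold with $\rho\ge 6$ is $S\times\pr^1$ for a Del Pezzo surface $S$; then $Y\cong S\times\pr^1$, and I would pull this splitting back through $f$. The induced contraction $X\to S$ has general fiber a Fano variety $W$ mapping to $\pr^1$, and a descent argument (again using the quasi elementary condition to control how classes of $X$ distribute over the two factors of $Y$) should give $X\cong S\times W$ with $W\to\pr^1$ quasi elementary.

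\medskip

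The main obstacle I anticipate is the transition from numerical splitting to an actual product decomposition, i.e.\ upgrading $\rho_X=\rho_Y+\rho_F$ to $X\cong Y\times F$ (or $X\cong S\times W$). The numerical equality only says the classes of fiber curves and base curves span $\N(X)$ independently; producing a genuine isomorphism requires showing the family of fibers is isotrivial \emph{and} that the resulting fibration is a trivial bundle, which is where one must rule out twisting and monodromy. I expect the key leverage to be the large Picard number together with the Fano condition on $X$: a nontrivial family or a nontrivial bundle structure would introduce curve classes or deformations incompatible with $-K_X$ ample and with the quasi elementary constraint on $i_*\N(F)$. Making this rigorous — rather than merely numerical — is the technical heart of the argument and is where I would spend most of the effort.
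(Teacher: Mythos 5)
Your skeleton --- settle the surface case first, then bootstrap the threefold case by composing contractions of $Y$ with $f$ and invoking Mori--Mukai when $\rho_Y\geq 6$ --- is indeed the paper's architecture, but at each of the places where the real work happens you either assert the conclusion or propose a mechanism that fails. In part $(i)$: rationality of $Y$ plus a bound on $\rho_Y$ does not make $Y$ Del Pezzo (Hirzebruch surfaces $\mathbb{F}_n$, $n\geq 2$, are rational with $\rho=2$ and not Fano); the paper gets Fano-ness by showing every elementary contraction of $Y$ is a Mori contraction (Remark \ref{fano}), and for the birational ones this requires lifting them to $X$ and proving via Theorem \ref{secondo} that the lifting is a smooth blow-up, so that $\psi$ is a point blow-up. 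More seriously, your mechanism for the product decomposition when $\rho_Y\geq 3$ --- ``$f$ is a smooth morphism with trivial monodromy, hence $X\cong Y\times F$'' --- is false: $\pr_{\pr^2}(\ol^{\oplus(n-2)}\oplus\ol(1))$ (Example \ref{firstex}) is Fano, fibers over the simply connected $\pr^2$ by a smooth morphism with trivial monodromy, and is not a product. Trivial monodromy only yields the numerical equality $\rho_X=\rho_Y+\rho_F$, never the biregular splitting. The paper's actual proof reduces to $Y$ the blow-up of $\pr^2$ at two points, pins down all extremal rays of $\NE(X)$ outside $\NE(f)$ (using Theorem \ref{face2} and the nonexistence of suitable small contractions), shows via Wi{\'s}niewski's length/fiber-dimension inequality that the contraction $h\colon X_0\to W_0$ of the third lifted ray has all fibers of dimension $2$, and concludes with an intersection computation ($1=l^2=d\,(\,\widetilde{l}\,)^2$) that the induced finite map $X_0\to\pr^2\times W_0$ has degree $1$. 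You correctly flag this as the technical heart, but ``Fano plus quasi elementary rules out twisting'' is not an argument --- the twisted examples above are Fano and quasi elementary.

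In part $(ii)$ the gap is larger still: you argue (at best) that $-K_Y$ is ample and then write ``i.e.\ $Y$ is smooth Fano'', conflating ampleness with smoothness. A priori $Y$ is only factorial with isolated canonical singularities, and ampleness of $-K_Y$ does not remove them; proving smoothness is the bulk of the paper's section 6. The paper argues by contradiction: if $Y$ is singular with $\rho_Y\geq 4$, one blows down chains of smooth curves to reach a singular Fano $Y_1$ with $\rho_{Y_1}=1$, shows (Lemma \ref{canonical}) that its singularities must be terminal, and then uses Namikawa's smoothing together with Jahnke--Radloff's comparison of Mori cones (Lemma \ref{conodiMori}) to transport the conic-bundle structure, guaranteed by Mori--Mukai for the smooth $\rho=4$ deformation, back to $Y_4$ --- contradicting Claim \ref{final}, which says $Y_4$ has no contraction onto a surface. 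None of this smoothing-theoretic input appears in your plan, and there is no visible substitute for it. Finally, even your Fano-ness argument only runs over fiber-type extremal rays of $Y$ (those you can compose with $f$ and feed into part $(i)$); the birational rays, which are the generic case, require Theorem \ref{secondo} (they are blow-ups of smooth curves in the smooth locus), Proposition \ref{divisor} (a type $(2,0)$ ray forces $\rho_Y\leq 3$, hence cannot exist when $\rho_Y\geq 4$), and Claim \ref{2,1fano}, and are untouched by your argument.
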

\noindent In the case $\dim X=3$, $(i)$ has been shown in 
\cite[Proposition~8]{morimukai} under the more general assumption
that $f\colon X\to Y$ is an equidimensional contraction onto a surface.
See section 7 (p.\ \pageref{firstex}) for examples
concerning the sharpness of the statement.

The following are some applications
to Fano $4$-folds and $5$-folds.
\begin{corollary}\label{4fold}
Let $X$ be a smooth complex Fano 4-fold.

\smallskip

\noindent If $X$ has a non trivial
 quasi elementary contraction
of fiber type, then $\rho_X\leq 18$, with
equality if and only if $X\cong S_1\times S_2$, $S_i$ Del Pezzo
surfaces with $\rho_{S_i}=9$.

\smallskip

\noindent If $X$ has  an elementary contraction onto a surface $S$ and
 $\rho_X\geq 4$, then $X\cong \pr^2\times S$.

\smallskip

\noindent If $X$ has an elementary contraction onto a threefold and $\rho_X\geq
7$, then either $X\cong\pr^1\times\pr^1\times S$, or
$X\cong\mathbb{F}_1\times S$, $S$ a Del Pezzo surface.
\end{corollary}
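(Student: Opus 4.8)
The plan is to deduce all three assertions from Theorem \ref{result} together with two elementary inputs: on a Fano $4$-fold any non-trivial contraction onto a lower-dimensional base is automatically of fiber type, and every elementary contraction of fiber type is quasi elementary (as recorded after Definition \ref{quasiel}). Throughout I will use that Picard numbers are additive on products, $\rho_{A\times B}=\rho_A+\rho_B$, that a Fano surface is a del Pezzo surface with $\rho\le 9$, that a $1$-dimensional fiber-type base has $\rho=1$ and a $1$-dimensional general fiber is $\pr^1$, and the explicit list of low Picard number del Pezzo surfaces ($\rho=1$ gives $\pr^2$; $\rho=2$ gives exactly $\pr^1\times\pr^1$ and $\mathbb{F}_1$). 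Given these, each assertion becomes a case analysis in the dimension of the base followed by bookkeeping with Picard numbers.

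For the first assertion I would let $f\colon X\to Y$ be a non-trivial quasi elementary contraction of fiber type, so $1\le\dim Y\le 3$, and split according to $\dim Y$. If $\dim Y=1$ then $\rho_Y=1$ and the general fiber $F$ is a Fano $3$-fold, so $\rho_F\le 10$ and $\rho_X\le\rho_Y+\rho_F\le 11$. If $\dim Y=3$ then $F\cong\pr^1$, hence $\rho_F=1$, and Theorem \ref{result}$(ii)$ gives $\rho_X\le 10+1=11$. The remaining case $\dim Y=2$ is the only one producing large $\rho_X$: Theorem \ref{result}$(i)$ yields $\rho_X\le\rho_Y+\rho_F\le 9+9=18$, both $Y$ and $F$ being del Pezzo surfaces. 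Equality forces $\rho_X=\rho_Y+\rho_F$ with $\rho_Y=\rho_F=9$; since then $\rho_Y\ge 3$, the second part of Theorem \ref{result}$(i)$ gives $X\cong Y\times F$, i.e.\ $X\cong S_1\times S_2$ with $\rho_{S_i}=9$. Conversely such a product is Fano with $\rho_X=18$, and its projection is a smooth morphism, hence quasi elementary by Lemma \ref{smoothness}, which settles the characterization of the extremal case.

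For the second assertion, an elementary contraction $f\colon X\to S$ onto a surface is of fiber type and quasi elementary with $\rho_X=1+\rho_S$, so $\rho_X\ge 4$ forces $\rho_S\ge 3$; Theorem \ref{result}$(i)$ then gives $X\cong S\times F$, and comparing $\rho_X=\rho_S+\rho_F$ with $\rho_X=1+\rho_S$ forces $\rho_F=1$, whence $F\cong\pr^2$ and $X\cong\pr^2\times S$. For the third assertion, an elementary contraction $f\colon X\to Y$ onto a threefold is of fiber type and quasi elementary with $\rho_X=1+\rho_Y$, so $\rho_X\ge 7$ forces $\rho_Y\ge 6$; Theorem \ref{result}$(ii)$ then gives $X\cong S\times W$ and $Y\cong S\times\pr^1$ with $S$ del Pezzo and $W$ a Fano surface admitting a quasi elementary contraction onto $\pr^1$. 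Since $\dim W=2$, $W$ is a del Pezzo surface, and combining $\rho_X=\rho_S+\rho_W$, $\rho_Y=\rho_S+1$ and $\rho_X=1+\rho_Y$ yields $\rho_W=2$; by the classification $W\cong\pr^1\times\pr^1$ or $W\cong\mathbb{F}_1$, giving $X\cong\pr^1\times\pr^1\times S$ or $X\cong\mathbb{F}_1\times S$.

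I expect the only step needing genuine care, as opposed to routine bookkeeping, to be the equality analysis in the first assertion: one must verify that the bound $18$ is reached only in the surface-base case (the other two bases giving at most $11$), and then extract the actual product decomposition from Theorem \ref{result}$(i)$ rather than from the numerical equality alone — this is precisely where the hypothesis $\rho_Y\ge 3$ is indispensable. The remaining difficulty is purely organizational, namely keeping the Picard number identities $\rho_X=1+\rho_Y$, $\rho_X=\rho_S+\rho_W$ and $\rho_Y=\rho_S+1$ consistent so as to pin down $\rho_W=2$ in the last assertion.
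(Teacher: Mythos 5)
Your proof is correct and takes essentially the same approach as the paper: both deduce all three statements from Theorem \ref{result}, parts $(i)$ and $(ii)$, combined with the identity $\rho_X=1+\rho_Y$ in the elementary case, additivity of $\rho$ on products, and the classification of Del Pezzo surfaces with $\rho\leq 2$. The only difference is one of detail: you spell out the case analysis over $\dim Y\in\{1,2,3\}$ in the first assertion (where bases of dimension $1$ and $3$ give $\rho_X\leq 11$), which the paper compresses into calling that statement a direct consequence of Theorem \ref{result} $(i)$.
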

\begin{corollary}\label{5fold}
Let $X$ be a smooth complex Fano 5-fold.

\smallskip

\noindent If $X$ has two distinct
elementary contractions of fiber type, then
$\rho_X\leq 12$.

\smallskip

\noindent If $X$ has an elementary contraction onto $Y$ with $\dim Y\leq 3$,
then $\rho_X\leq 11$.

\smallskip

\noindent Suppose that $X$ has an elementary contraction 
$f\colon X\to Y$ with $\dim Y=4$. If $X$ has another elementary
contraction $\ph$ of type (3,0), (4,0), (4,1), 
or such that $f(\Exc(\ph))=Y$, then $\rho_X\leq 12$.
\end{corollary}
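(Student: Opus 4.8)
The plan is to reduce every statement to a single application of Theorem~\ref{result}, whose numerical content I isolate as follows. For \emph{any} quasi elementary contraction of fiber type $g\colon X\to Z$ of a Fano manifold $X$ onto a threefold, one has $\rho_Z\leq 10$: by Theorem~\ref{result}$(ii)$ either $\rho_Z\leq 5$, or $\rho_Z\geq 6$ and $Z\cong S\times\pr^1$ with $S$ a Del Pezzo surface, whence $\rho_Z=\rho_S+1\leq 10$. Likewise, a quasi elementary contraction of fiber type onto a surface has $\rho_Z\leq 9$ by Theorem~\ref{result}$(i)$. These two facts are the engine for everything below.

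The second statement follows directly. Since $f$ is elementary, $\rho_X=1+\rho_Y$, and I split on $\dim Y\in\{1,2,3\}$. If $\dim Y\leq 2$ then $\rho_Y\leq 9$ (trivially for a point or a curve, by the engine for a surface), so $\rho_X\leq 10$. If $\dim Y=3$ then $f$, being elementary, is quasi elementary of fiber type onto a threefold, so the engine gives $\rho_Y\leq 10$ and hence $\rho_X=1+\rho_Y\leq 11$.

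For the first and third statements the common device is to contract the two relevant extremal rays simultaneously. In the first statement let $R_1,R_2\subset\ov{\NE}(X)$ be the rays of the two elementary contractions $f_1,f_2$; if either target has dimension $\leq 3$ the second statement already gives $\rho_X\leq 11$, so I may assume $\dim Y_1=\dim Y_2=4$, i.e.\ both $f_i$ are conic bundle type with $\pr^1$ general fibers. In the third statement I set $R_1=R_f$ and $R_2=R_\ph$. In either case I form the contraction $g\colon X\to Z$ of the smallest face $\tau\subseteq\ov{\NE}(X)$ containing $R_1$ and $R_2$. Since the two rays are distinct and one of them is of fiber type (always in case one, and $R_f$ in case three), $g$ is of fiber type and factors through each of $f_1,f_2$ (resp.\ through $f$ and $\ph$). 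I then aim to prove that $\tau$ is a $2$-dimensional face with $\dim Z=3$, so that $\rho_X-\rho_Z=2$; the general fiber $G$ of $g$ is then a Del Pezzo surface carrying curves of both classes $R_1,R_2$, so the image of $i_*\colon\N(G)\to\N(X)$ contains every $g$-contracted class and $g$ is quasi elementary. The engine now yields $\rho_Z\leq 10$, whence $\rho_X=\rho_Z+2\leq 12$.

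The hard part is exactly the claim that $g$ contracts a $2$-dimensional face onto a threefold and is quasi elementary. Controlling $\dim Z$ amounts to bounding $\dim G$: factoring $g$ as $X\xrightarrow{f_1}Y_1\xrightarrow{p}Z$ one sees $\dim G=\dim(p\text{-fiber})+1$, so I must rule out $p$ having fibers of dimension $\geq 2$, which would give $\dim Z\leq 2$ and leave $\rho_X-\rho_Z$ uncontrolled. This is where the hypotheses must be used: in the first statement the fact that \emph{both} $f_i$ are conic bundles should force the relative cone $\ov{\NE}(X/Z)$ to be $2$-dimensional; in the third statement the constraints on the type of $\ph$ --- $(3,0)$, $(4,0)$, $(4,1)$, or $f(\Exc(\ph))=Y$ --- are precisely what should guarantee that contracting $R_\ph$ alongside $R_f$ drops the dimension of the base by exactly one, keeping $\tau$ two-dimensional and $Z$ a threefold. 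Establishing these dimension counts case by case, together with the quasi elementariness of $g$, is the technical heart of the argument.
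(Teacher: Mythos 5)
Your reduction of the second statement is correct and complete: it is exactly the paper's argument ($\rho_X=1+\rho_Y$ plus Theorem~\ref{result}). But for the first and third statements your proposal is a plan, not a proof, and the step you defer --- that the smallest face $\tau$ containing $R_1$ and $R_2$ is $2$-dimensional, that $Z$ is a threefold, and that $g$ is quasi elementary --- is precisely the hard content, which the paper does \emph{not} obtain by elementary face manipulation. It is supplied by Corollary~\ref{appl} (for the first statement) and Corollary~\ref{cor2} (for the divisorial case of the third), both of which rest on Theorems~\ref{face} and \ref{face2}; those in turn require the covering-family/quasi-unsplit machinery of \cite{unsplit} (constructing a family of very free curves in the general fiber of $f$, pushing it to $W$, and taking the Chow-type quotient). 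There is no a priori reason why the face spanned by two extremal rays should be $2$-dimensional --- the paper's author even remarks after Theorem~\ref{face} that it is unknown whether the numerical hypotheses there can be dropped --- so ``should force'' cannot be left as an assertion; without invoking this machinery (or Corollaries~\ref{appl}, \ref{cor1}, \ref{cor2}, which you never use) the argument does not close.

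For the third statement there is a further structural problem: when $\ph$ is of type $(3,0)$, $(4,0)$ or $(4,1)$, your strategy cannot work even in principle. Every curve with class in $R_\ph$ lies in $\Exc(\ph)$, and when $f(\Exc(\ph))\subsetneq Y$ the general fiber of $f$ --- hence the general fiber $G$ of any fiber-type contraction factoring through $f$ --- is disjoint from $\Exc(\ph)$. So $\N(G,X)$ cannot contain $R_\ph$, the composite $g$ is \emph{not} quasi elementary, and your ``engine'' does not apply. The paper handles these cases by a different mechanism: an irreducible component $F_0$ of a large fiber of $\ph$ maps under $f$ either onto $Y$ (forcing $\rho_Y=1$) or onto a prime divisor $D$ with $\dim\N(D,Y)=1$, and then Proposition~\ref{divisor} forces $\rho_Y\leq 3$; this is Corollary~\ref{cor1} and yields the much stronger bound $\rho_X\leq 4$ for these types. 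Only the remaining case $f(\Exc(\ph))=Y$ with $\ph$ divisorial goes through the Theorem~\ref{face}/Theorem~\ref{result} route (Corollary~\ref{cor2}), and the fiber-type case of $f(\Exc(\ph))=Y$ reduces to the first statement. So your uniform ``contract both rays to a Del Pezzo fibration over a threefold'' scheme both omits the key technical input and misidentifies the geometry in the birational cases.
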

\noindent 
 Finally we give an application to Fano varieties with two quasi
elementary contractions.
\begin{corollary}
\label{appl}
Let $X$ be a smooth complex Fano variety of dimension $n$. Let $f_1\colon X\to
Y_1$ and $f_2\colon X\to Y_2$ be two quasi elementary contractions
of fiber type
with $\NE(f_1)\cap\NE(f_2)=\{0\}$. Let $F_i$ be a general fiber of
$f_i$.
Then $\dim F_1+\dim F_2\leq n$, moreover:
\begin{enumerate}[$\bullet$]
\item $\dim F_1+\dim F_2=n$ implies $\rho_X\leq\rho_{F_1}+\rho_{F_2}$
\item $\dim F_1+\dim F_2=n-1$ implies $\rho_X\leq\rho_{F_1}+\rho_{F_2}+1$
\item $\dim F_1+\dim F_2=n-2$ implies $\rho_X\leq\rho_{F_1}+\rho_{F_2}+9$
\item if  $\dim F_1+\dim F_2=n-3$  and
$f_2$ is elementary, then
  $\rho_X\leq\rho_{F_1}+11$. 
\end{enumerate}
\end{corollary}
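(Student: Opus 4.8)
Throughout write $\sigma_i:=\NE(f_i)$, let $V_i\subseteq\N(X)$ be its linear span, and let $i\colon F_i\hookrightarrow X$ be a general fibre; thus $\dim V_i=\rho_X-\rho_{Y_i}$ and, by quasi elementarity, $\dim V_i\leq\rho_{F_i}$. The whole proof revolves around a single \emph{combined contraction} of $X$. First I would extract the easy consequences of $\sigma_1\cap\sigma_2=\{0\}$. Since $\sigma_i$ is a face of $\NE(X)$ we have $\sigma_i=\NE(X)\cap V_i$, whence $V_1\cap\sigma_2=\sigma_1\cap\sigma_2=\{0\}$ and symmetrically; consequently no curve in a fibre of $f_1$ is contracted by $f_2$, so $f_2|_{F_1}\colon F_1\to Y_2$ is finite. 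This gives at once $\dim F_1\leq\dim Y_2$, i.e.\ $\dim F_1+\dim F_2\leq n$; equivalently $g:=(f_1,f_2)\colon X\to Y_1\times Y_2$ is finite onto its $n$-dimensional image, since a curve in a fibre of $g$ would lie in $\sigma_1\cap\sigma_2$.

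Set $k:=n-\dim F_1-\dim F_2\geq 0$. Because $X$ is Fano every subcone of $\NE(X)$ is $K_X$-negative, so the smallest face $\tau$ containing $\sigma_1\cup\sigma_2$ is contractible: let $h\colon X\to T$ be the associated contraction, through which both $f_i$ factor as $h=\psi_i\circ f_i$ with $\psi_i\colon Y_i\to T$. I expect $h$ to be quasi elementary of fibre type, with general fibre $F_h$, and the two numerical facts on which everything rests are
\begin{enumerate}[$(A)$]
\item $\dim T=k$, and
\item $\rho_X=\rho_T+\dim(V_1+V_2)$.
\end{enumerate}
I expect $(A)$ and $(B)$ to be the main obstacle. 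For $(A)$ the inequality $\dim T\leq k$ is easy: finiteness of $f_2|_{F_1}$ forces $F_h$ to contain $F_1$ and a fibre of $f_2$ meeting it in a finite set, so their tangent spaces are in direct sum and $\dim F_h\geq\dim F_1+\dim F_2$. The reverse amounts to showing that $(f_1,f_2)|_{F_h}\colon F_h\to\psi_1^{-1}(t)\times\psi_2^{-1}(t)$ is \emph{surjective}; here one must use that $F_h$ is Fano with $\NE(F_h)=\sigma_1+\sigma_2$, so that $f_1$ and $f_2$ jointly contract $F_h$ to a point and the images $f_2(F_1)$ cannot sweep out only a proper subvariety. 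Statement $(B)$ says that the face $\tau$ has span exactly $V_1+V_2$; it is the numerical counterpart of the product-like behaviour of $h$, and in the model case $X=F_1\times F_2$ it merely records $\N(X)=\N(F_1)\oplus\N(F_2)$.

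Granting $(A)$ and $(B)$, the statement is pure bookkeeping. Since $\dim T=k\leq 3$, applying Theorem \ref{result} to the quasi elementary contraction $h$ (and using $\rho_{\pr^1}=1$ when $k=1$) bounds the base: $\rho_T$ is $0$, is $1$, is $\leq 9$, is $\leq 10$ according as $k=0,1,2,3$. On the other hand $(B)$ together with $\dim(V_1+V_2)\leq\dim V_1+\dim V_2\leq\rho_{F_1}+\rho_{F_2}$ gives $\rho_X\leq\rho_T+\rho_{F_1}+\rho_{F_2}$. For $k=0,1,2$ this is exactly $\rho_X\leq\rho_{F_1}+\rho_{F_2}+c_k$ with $c_k=0,1,9$, the first three bullets. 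For the last bullet $f_2$ is elementary, so $\dim V_2=1$; then $(B)$ yields $\rho_X\leq\rho_T+\dim V_1+1\leq 10+\rho_{F_1}+1=\rho_{F_1}+11$.

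Finally I would note that the case $k=0$ can be settled unconditionally and is the seed of the whole argument: there $g$ is finite and surjective by dimension count, so $f_1|_{F_2}\colon F_2\to Y_1$ is finite and surjective, giving $\rho_{Y_1}\leq\rho_{F_2}$ and hence $\rho_X\leq\rho_{Y_1}+\rho_{F_1}\leq\rho_{F_1}+\rho_{F_2}$ with no further input. The surjectivity required in $(A)$ is exactly the assertion that $h$ restricts on $F_h$ to such a $k=0$ configuration, which is why establishing it — and the accompanying numerical decoupling $(B)$ — is the genuine difficulty, the remaining cases being formal once the low-dimensional target $T$ is put under the control of Theorem \ref{result}.
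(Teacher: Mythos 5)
The skeleton you propose --- a single combined contraction $h\colon X\to T$ through which both $f_i$ factor, quasi elementary with $\dim T\leq 3$, followed by Theorem \ref{result} and bookkeeping --- is exactly the paper's, and your bookkeeping, your proof of $\dim F_1+\dim F_2\leq n$, and your unconditional treatment of the case $k=0$ (via finiteness and surjectivity of $(f_1,f_2)$, which is in fact more elementary than the paper's argument for that bullet) are all correct. But the facts $(A)$ and $(B)$, which you yourself flag as the main obstacle, are left unproved, and the route you sketch for them fails. Contracting the smallest face $\tau$ of $\NE(X)$ containing $\NE(f_1)\cup\NE(f_2)$ does not give $(B)$: the linear span of such a face can strictly contain $V_1+V_2$ --- the paper warns of precisely this phenomenon in \ref{lifting}, where it may happen that $\widetilde{\alpha}+\NE(f)\subsetneq\widehat{\alpha}$ (at the level of cones, think of two extremal rays of a four-ray cone in $\R^3$ whose smallest common face is the whole cone). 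When $\operatorname{span}(\tau)\supsetneq V_1+V_2$, the identity $\rho_X=\rho_T+\dim\operatorname{span}(\tau)$ gives no bound by $\rho_{F_1}+\rho_{F_2}$, and the quasi elementarity of $h$ does not follow either. So $(A)$ and $(B)$ are not convex geometry plus the Contraction Theorem: the assertion that there is a contraction of $X$ whose kernel is exactly $V_1+V_2$ is a nontrivial theorem about Fano varieties, and it is the real content of the corollary.

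The paper supplies exactly these facts through Theorems \ref{face2}~$(i)$ and \ref{face}~$(i)$: starting from a very free rational curve in $F_1$ one builds a covering family $V$ whose equivalence quotient is $f_1$, transports it to a family $V'$ on $Y_2$, and invokes the quotient results of \cite{unsplit} to produce $h\colon X\to Z$ with $\dim Z\leq n-(\dim F_1+\dim F_2)$ (Remark \ref{dim}) and $\ker h_*=\ker(f_1)_*+\ker(f_2)_*$ --- your $(A)$ (as an inequality, which suffices) and $(B)$ --- after which quasi elementarity of $h$ follows just as you predict. That machinery requires the general $V'$-equivalence class to have codimension at most $2$, or at most $3$ when the family is quasi-unsplit, i.e.\ when one of the two contractions is elementary. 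This is precisely why the corollary stops at $\dim F_1+\dim F_2=n-3$ and why the last bullet assumes $f_2$ elementary, a hypothesis your argument cannot account for, since for you elementarity enters only through $\dim V_2=1$. The paper moreover states explicitly that it does not know whether these codimension restrictions can be removed; so $(A)$ and $(B)$, asserted in your sketch for the minimal face with no dimension hypothesis, are not merely unproved but beyond what the paper itself can establish.
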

\noindent If $f_i$ is elementary one can replace $\rho_{F_i}$ by $1$
in the
statement, see  Remark \ref{elem}.
These corollaries
should be compared to the
following result by Wi{\'s}niewski, involving an arbitrary number
of elementary contractions of fiber type.
\begin{thm}[(\cite{wisn}, Theorem 2.2)]
Let $X$ be a smooth complex Fano variety of dimension~$n$. Suppose
that $X$ has $k$ distinct elementary contractions of fiber type, and
let $F_1,\dotsc,F_k$ be the general fibers. Then $\sum_i\dim F_i\leq
n$, moreover:
\begin{enumerate}[$\bullet$]
\item $\sum_i\dim F_i =n$ implies $\rho_X\leq k$
\item $\sum_i\dim F_i=n-1$ implies $\rho_X\leq k+1$.
\end{enumerate}
In particular $k\leq n$, and if $k\geq n-1$ then $\rho_X\leq n$.
\end{thm}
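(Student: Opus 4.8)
The plan is to translate the contractions into families of rational curves and to argue with loci of chains of such curves. For each $i$ the elementary fiber type contraction $f_i$ is the contraction of an extremal ray $R_i$ of $\NE(X)$, and I fix a minimal (unsplit) covering family $\ma{M}_i$ of rational curves whose class $[C_i]$ generates $R_i$. Since the $f_i$ are distinct the rays $R_i$ are distinct, and because each $R_i$ is extremal one has $R_i\not\subseteq\operatorname{cone}(R_j\,:\,j\neq i)$; in particular $R_j\cap\operatorname{cone}(R_1,\dots,R_{j-1})=\{0\}$ for every ordering of the families. For a general point $x$ the fiber $F_i$ of $f_i$ through $x$ is smooth and coincides with the locus $\Lo_x(\ma{M}_i)$ of curves of $\ma{M}_i$ through $x$, so $\dim\Lo_x(\ma{M}_i)=\dim F_i=n-\dim Y_i$.

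To bound $\sum_i\dim F_i$ I consider the chain locus $Z:=\Lo_x(\ma{M}_1,\dots,\ma{M}_k)$, built by following from $x$ curves of $\ma{M}_1$, then of $\ma{M}_2$, and so on. Writing $Z_j=\Lo_x(\ma{M}_1,\dots,\ma{M}_j)=f_j^{-1}\!\big(f_j(Z_{j-1})\big)$, the fiber dimension theorem gives $\dim Z_j=\dim F_j+\dim f_j(Z_{j-1})=\dim Z_{j-1}+\dim F_j-\delta_j$, where $\delta_j$ is the dimension of a general fiber of $f_j|_{Z_{j-1}}$, that is of $Z_{j-1}\cap F'$ for $F'$ a general fiber of $f_j$. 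The heart of the argument is the vanishing $\delta_j=0$: a positive–dimensional component of $Z_{j-1}\cap F'$ would contain a curve $\Gamma$ with $[\Gamma]\in R_j$ (because $\Gamma\subseteq F'$) and, since $Z_{j-1}$ is rationally chain connected by the unsplit families $\ma{M}_1,\dots,\ma{M}_{j-1}$, with $[\Gamma]$ in $\operatorname{cone}(R_1,\dots,R_{j-1})$; this contradicts $R_j\cap\operatorname{cone}(R_1,\dots,R_{j-1})=\{0\}$. Granting $\delta_j=0$ for all $j$ yields $\dim Z=\sum_i\dim F_i$, and since $Z\subseteq X$ we conclude $\sum_i\dim F_i\leq n$. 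I expect this additivity of chain–locus dimensions, resting on unsplitness of the families together with extremality of the rays through the bend–and–break estimates, to be the main obstacle.

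For the rigidity statements I use the standard fact that, $Z$ being rationally chain connected by the unsplit families $\ma{M}_i$, the image of $\N(Z)\to\N(X)$ is generated by $[C_1],\dots,[C_k]$ and so has dimension at most $k$. If $\sum_i\dim F_i=n$ then $Z=X$, whence $\N(X)$ itself is generated by the $[C_i]$ and $\rho_X\leq k$. If $\sum_i\dim F_i=n-1$, then $Z$ is a prime divisor through the general point, and a relative Picard number argument in codimension one bounds $\rho_X$ by $k+1$. Finally, each fiber type contraction has $\dim F_i\geq 1$, so $k\leq\sum_i\dim F_i\leq n$; and when $k\geq n-1$ the only admissible values are $\sum_i\dim F_i\in\{n-1,n\}$, and feeding these into the two bullet points gives $\rho_X\leq n$ in every case.
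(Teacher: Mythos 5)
First, a point of orientation: the paper does not prove this statement at all --- it is quoted from Wi{\'s}niewski (\cite{wisn}, Theorem 2.2) --- so your argument has to stand on its own, and as written it has two genuine gaps. The first is the identification $F_i=\Lo_x(\ma{M}_i)$ for a minimal covering family, together with the asserted equality $\Lo_x(\ma{M}_1,\dotsc,\ma{M}_j)=f_j^{-1}\bigl(f_j(Z_{j-1})\bigr)$. Both fail in general: on the smooth quadric threefold $Q^3$ (one elementary contraction, to a point, with $F=Q^3$) the lines through a general point $x$ sweep out only the $2$-dimensional cone $Q^3\cap T_xQ^3$. Consequently, on $X=Q^3\times Q^3$ your family-theoretic chain locus has dimension $4$, while the preimage-theoretic one is all of $X$ and $\sum_i\dim F_i=6=n$. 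So the two sets you conflate are genuinely different, and neither does both jobs you ask of it: only the preimage version $Z_j=f_j^{-1}(f_j(Z_{j-1}))$ satisfies $\dim Z_j\geq \dim f_j(Z_{j-1})+\dim F_j$ (every fiber of a surjective morphism has dimension at least that of the general one), which is what feeds the additivity once $\delta_j=0$; but the preimage version is a union of entire fibers of $f_j$ and is \emph{not} rationally chain connected by the unsplit families $\ma{M}_1,\dotsc,\ma{M}_{j-1}$, so the justification you offer for $\delta_j=0$ does not apply to the set you actually need.

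The second and more serious gap is the claim that every curve $\Gamma\subseteq Z_{j-1}$ has class in $\operatorname{cone}(R_1,\dotsc,R_{j-1})$, i.e.\ a combination with \emph{non-negative} coefficients. What the available tools give is only the linear span: Koll\'ar's Proposition IV.3.13.3 of \cite{kollar} (the result this paper itself invokes for statements of this kind) produces rational coefficients of arbitrary sign, and so does the elementary lifting argument --- if $C\subseteq Z_j$ is not contracted by $f_j$, choose a curve $C'\subseteq Z_{j-1}$ with $f_j(C')=f_j(C)$; since $f_j$ is elementary, $\ker(f_j)_*$ is the line spanned by $R_j$, whence $[C]=\lambda[C']+\mu r_j$ with $\lambda>0$ but $\mu$ of unknown sign. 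With possibly negative coefficients, extremality is powerless: distinct extremal rays of a polyhedral cone can satisfy mixed-sign linear relations as soon as there are at least four of them (on the blow-up of $\pr^2$ in three points, $E_1+(\ell-E_1-E_2)=E_3+(\ell-E_2-E_3)$), so $R_j\cap\operatorname{span}(R_1,\dotsc,R_{j-1})$ need not be zero and a curve of $Z_{j-1}$ with class in $R_j$ is not excluded. You correctly isolate the fact $R_j\cap\operatorname{cone}(R_1,\dotsc,R_{j-1})=\{0\}$ --- that much does follow from extremality --- but promoting ``span'' to ``cone'', equivalently controlling the sign of $\mu$, is precisely the hard core of Wi{\'s}niewski's theorem, and your proposal grants it (``I expect\dots to be the main obstacle'') rather than proves it. The same looseness affects your second bullet: ``a relative Picard number argument in codimension one'' is not an argument, and the naive statement behind it, $\rho_X\leq\dim\N(D,X)+1$ for a prime divisor $D$, is false in general, as an exceptional curve on the blow-up of $\pr^2$ in two points already shows ($\dim\N(D,X)=1$, $\rho_X=3$); some use of the fact that your divisor moves through the general point would be indispensable.
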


\smallskip

Our main tool 
is Mori theory, in particular we use many properties of
contractions of Fano varieties shown in \cite{wisn}. 

In section
\ref{prel} we recall some basic notions and properties. In
 \ref{lifting} we show that when a variety $Y$ is the target of a
contraction $f\colon X\to Y$ where $X$ is Fano, $Y$ shares with $X$ a
good behaviour with respect to Mori theory, see Lemma \ref{easy}.

In section \ref{qec} we define  quasi elementary contractions of fiber
type $f\colon X\to Y$ and give
some related properties and examples. We study the singularities of
$Y$ in Lemma \ref{target},
generalizing  results known in the elementary case and using in
particular results from \cite{ABW}. 
Then in \ref{boh} we study elementary contractions of
$Y$ by means of their liftings to $X$. This is a key ingredient in the
proof of Theorem \ref{result}. Finally in \ref{div} we
show that if $X$ is Fano, $\dim Y\geq 3$ and $Y$ contains a prime
divisor~$D$ with $\rho_D=1$, then $\rho_Y\leq 3$,
so $\rho_X\leq 3+\rho_F$ where $F$ is a general fiber of
$f$. This is a
generalization of results from \cite{toru} and \cite{bonwisncamp}.

In section \ref{quasiunsplit}  we apply
results from \cite{unsplit} to deduce that
some contractions of $X$ always induce contractions of $Y$.
This is needed in the proof of Theorem~\ref{result} $(i)$, moreover it
extends the applicability
of Theorem \ref{result}, see section \ref{ea}.

Section \ref{surf} contains the proof of Theorem \ref{result} $(i)$, 
which relies on  the
results of sections \ref{qec} and \ref{quasiunsplit}.

In section \ref{3fold} we show Theorem \ref{result} $(ii)$. 
This is based on a detailed analysis of the possible elementary
contractions of the target $Y$.
We need the
classification of smooth Fano $3$-folds by Mori and Mukai, and 
we imitate the strategy for the classification 
 of imprimitive smooth Fano $3$-folds 
(see \cite{morimukai,fanoEMS}) to get some results about the
 singular case. We also use the existence of a smoothing of 
a terminal
 Fano $3$-fold $Z$ shown in
 \cite{namikawa}, and some relations among $Z$
 and its smoothing shown in \cite{smoothings}, see Lemma \ref{conodiMori}.

Finally in section \ref{ea} we prove Corollaries \ref{4fold},
\ref{5fold}, and \ref{appl}, we give some other
application and related examples.
\begin{acknowledgements}
I wish to thank Laurent Bonavero and St\'ephane Druel for many
discussions about Fano varieties and their Picard number, and about
families of rational curves. My thanks also to Rita Pardini,
for her interest in this subject and for the conversations
 we had about it, and to
 Priska Jahnke,
from whom I learnt a lot about singular Fano threefolds.
Finally I thank the referee for useful remarks.
\end{acknowledgements}
\section{Preliminaries}
\label{prel}
We work over the field of complex numbers.

Let $X$ be a normal projective variety
of dimension $n$. We denote by $X_{reg}$ the smooth locus of~$X$.

Let $\N(X)$ be the vector space
of $1$-cycles in $X$ with real coefficients, modulo numerical
equivalence. The dimension of $\N(X)$ is equal to the Picard number
$\rho_X$ of $X$. 
Inside $\N(X)$ we denote by $\NE(X)$ the convex cone generated by
classes of effective curves, and by $\overline{\NE}(X)$ its
closure.  If $C\subset X$ is a curve, its numerical class is
$[C]\in\N(X)$. 

If $Z$ is a closed subset of $X$, call $i\colon Z\hookrightarrow X$
the inclusion, and consider the linear map
$$i_*\colon\N(Z)\la\N(X).$$
We denote by $\N(Z,X)$ the image of $i_*$ in $\N(X)$. Thus we have:
$$\dim\N(Z,X)\leq\rho_Z\quad\text{and}\quad \dim\N(Z,X)\leq\rho_X.$$

A \emph{contraction} of $X$ is a surjective morphism with connected
fibers $f\colon X\to Y$ onto a projective and normal variety $Y$. The
push-forward of $1$-cycles defined by $f$ gives a surjective linear map
$$f_*\colon\N(X)\la\N(Y),$$
so that $\rho_X-\rho_Y=\dim \ker f_*$. We also consider the convex
cone  $\NE(f)$ in $\N(X)$ generated
by classes of curves contracted by
$f$, that is 
$$\NE(f)=\NE(X)\cap\ker f_*.$$
The contraction $f$ is determined (up to isomorphism) by $\NE(f)$, see
\cite[Proposition 1.14]{debarreUT}.
We say that $f$ is \emph{of fiber type} if $\dim Y<\dim X$, otherwise
$f$ is birational. When $f$ is of fiber type, we say that $f$ is non
trivial if $\dim Y>0$.
We denote by $\Exc(f)$ the exceptional locus of
$f$, i.e.\ the locus where $f$ is not an isomorphism. We
say that $f$ is \emph{divisorial} if $\Exc(f)$ is a divisor, \emph{small}
if $\Exc(f)$ has codimension bigger that $1$.
More generally 
we say that $f$ is \emph{of type} (a,b) if $\dim\Exc(f)=a$ and $\dim
f(\Exc(f)) =b$.
Finally $f$ is \emph{elementary} if $\rho_X-\rho_Y=1$.

We will need to work with singular varieties; we refer the reader to
\cite{debarreUT,KollarMori} for the definitions and properties of
terminal and canonical singularities. We say that $X$ is
$\Q$-factorial if every Weil divisor is $\Q$-Cartier.

Suppose that $X$ has canonical singularities (in particular $K_X$ is
$\Q$-Cartier). 
We say that $f\colon X\to Y$ 
is a \emph{Mori contraction} if it is a contraction
and moreover $-K_X\cdot C>0$ for every curve $C\subset X$ contracted
by $f$. 
We recall two important properties of Mori contractions:
\stepcounter{thm}
\begin{enumerate}[(\thethm)]
\item $\dim\NE(f)=\dim\ker f_*=\rho_X-\rho_Y$,
namely $\ker f_*$ is the linear subspace generated by $\NE(f)$;
\stepcounter{thm}
\setcounter{property}{\value{thm}}
\item
for any $L\in\Pic X$ one has $L\in f^*(\Pic Y)$ if and only if $L\cdot
  C=0$ for every curve $C\subset X$ contracted by $f$.
\end{enumerate}
(See \cite[Theorem 3.7 (4)]{KollarMori} for the second statement, which
implies the first one.)

Suppose that $\NE(X)$ is closed and polyhedral. By a
\emph{face} of $\NE(X)$ we just mean a face in the geometrical
sense. For any contraction $f$ of $X$, $\NE(f)$ is a face of $\NE(X)$.
An \emph{extremal ray} is a one dimensional face, with no
assumptions on the intersection of $K_X$ with its elements.
We will use greek letters $\alpha$, $\beta$, etc.\ to denote faces of
$\NE(X)$. If $\alpha$ is an extremal ray and $D$ a $\Q$-Cartier
divisor on $X$, we will say that $D\cdot\alpha >0$, $D\cdot\alpha=0$,
or $D\cdot\alpha<0$,  if respectively $D\cdot
v>0$, $D\cdot v=0$, or $D\cdot v<0$ for a non zero element $v\in\alpha$.
We denote by $\Lo(\alpha)\subseteq X$ the union of all curves in $X$
whose numerical class is in $\alpha$.

Let $X$ be a projective variety with  canonical
singularities and $K_X$ Cartier. We say that
$X$ is \emph{Fano} if $-K_X$ is ample. If so, 
the cone $\NE(X)$ is closed and polyhedral, and any
contraction of $X$ is a Mori contraction. Moreover for any face
$\alpha$ of $\NE(X)$ there exists a contraction $f$ of $X$ such that
$\alpha=\NE(f)$. This follows from the Contraction Theorem, see
\cite[Theorem 3.7]{KollarMori}.
\begin{remark}
\label{wellknown}
Let $X$ and $Y$ be factorial projective varieties, and $\sigma\colon
X\to Y$ the blow-up of a smooth subvariety $A\subset Y_{reg}$. Suppose
that $X$ is Fano and let $C\subset Y$ be an irreducible curve such
that $C\not\subset A$ and $C\cap A\neq\emptyset$. Let $\widetilde{C}$
be the proper transform of $C$ in $X$. Then:
$$-K_X\cdot\widetilde{C}<-K_Y\cdot C,$$
in particular $-K_Y\cdot C\geq 2$.

In fact if $E=\Exc(\sigma)$ we have $\widetilde{C}\not\subseteq E$ and 
 $\widetilde{C}\cap E\neq\emptyset$, so $\widetilde{C}\cdot
 E>0$. Moreover $K_X=\sigma^*(K_Y)+aE$ where $a=\codim A-1$, so
$$ -K_X\cdot \widetilde{C}= -K_Y\cdot C -aE\cdot \widetilde{C}<-K_Y\cdot C.$$
\end{remark}
\begin{remark}
\label{divisorial}
Let $X$ be a normal and $\Q$-factorial
projective variety and $f\colon X\to Y$ an
elementary divisorial contraction. Then $\Exc(f)$ is an irreducible
divisor
and $\Exc(f)\cdot\NE(f)<0$. 

In fact let $E$ be an irreducible component of $\Exc(f)$, 
then $E\cdot\NE(f)<0$
 (see for instance \cite[Lemma 3.39]{KollarMori} applied to $B=-E$).
Now if $C\subset X$ is an irreducible curve contracted by $f$, we have 
$E\cdot C<0$, thus $C\subseteq E$. Hence $E=\Exc(f)$.
\end{remark}
\begin{parg}\label{lifting}
{\bf Targets of contractions of smooth Fano varieties.}
Let $X$ be a smooth Fano variety and
$f\colon X\to Y$ a contraction. 
Consider
the push-forward
$f_*\colon\N(X)\to\N(Y)$.
We observe that
$$f_*(\NE(X))=\NE(Y),$$
namely \emph{$\NE(Y)$ is the linear projection of $\NE(X)$ from the
  face $\NE(f)$}. This simple remark implies many properties of
$\NE(Y)$. For instance it is closed and polyhedral, since $\NE(X)$
is. Moreover, faces of $\NE(Y)$ are in bijection (via $f_*$) with
faces of $\NE(X)$ containing $\NE(f)$. In fact this description
is the same
 as the one involving the ``star of a cone'' in toric geometry, see \cite[p.\
  52]{fulton}. 

Let's consider a face $\alpha$ of $\NE(Y)$, and let $\widehat{\alpha}$ be the
unique face of $\NE(X)$ containing $\NE(f)$ and such that
$f_*(\widehat{\alpha})=\alpha$. Then
$\dim\widehat{\alpha}=\dim\alpha+\dim\NE(f)$. Since 
$\NE(f)$ is a face of $\widehat{\alpha}$, we can choose another face
$\widetilde{\alpha}$ of $\widehat{\alpha}$ with the properties:
$$\dim\widetilde{\alpha}=\dim\alpha \quad\text{ and }\quad
\widetilde{\alpha}\cap\NE(f)=\{0\}.$$
Observe that the choice of $\widetilde{\alpha}$ will not be unique in
general, and that it can very well be
$\widetilde{\alpha}+\NE(f)\subsetneq \widehat{\alpha}$. 

When $\alpha$ is an extremal ray, $\widetilde{\alpha}$ is an extremal
ray of $\NE(X)$, and $f_*(\widetilde{\alpha})=\alpha$. 
There is a rational curve $C\subset X$ such that
$[C]\in\widetilde{\alpha}$, hence $f(C)$ is a rational curve in $Y$
with numerical class in $\alpha$.

Since $X$ is Fano, there exist contractions $\ph\colon X\to W$ and
$h\colon X\to Z$ such that $\widetilde{\alpha}=\NE(\ph)$ and
$\widehat{\alpha}=\NE(h)$: 
$$\xymatrix{
X\ar[d]_f\ar[dr]^{h}\ar[r]^{\ph}& W \\
Y & Z}$$
Now by rigidity (see for instance \cite[Lemma 1.15]{debarreUT}) there exist
contractions $\psi\colon Y\to Z$ and $g\colon W\to Z$ that make the
following diagram commute:
$$
\xymatrix{
X\ar[d]_f\ar[dr]^{h}\ar[r]^{\ph}& W\ar[d]^g \\
Y\ar[r]_{\psi} & Z}
$$
It is not difficult to check that $\NE(\psi)=\alpha$ and that
$\dim\ker\psi_*=\dim\alpha$.  We will say that  
$\ph\colon X\to W$ is a \emph{lifting} of
$\psi$.
Summing up, 
 we
have proved the following.
\begin{lemma}\label{easy}
Let $X$ be a smooth Fano variety and $f\colon X\to Y$ a
contraction. Then $\NE(Y)$ is a closed polyhedral cone, and every
extremal ray contains the class of some rational curve. 

Moreover for every face $\alpha$ of $\NE(Y)$ there exists a
contraction $\psi\colon Y\to Z$ such that $\NE(\psi)=\alpha$ and
$\rho_Y-\rho_Z=\dim\alpha$. 
\end{lemma}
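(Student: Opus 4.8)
The plan is to build everything on the single observation, recorded in \ref{lifting}, that $\NE(Y)$ is the linear image $f_*(\NE(X))$, together with the fact that $X$ is Fano so that $\NE(X)$ is closed and polyhedral with each extremal ray spanned by a rational curve. First I would nail down $\overline{\NE}(Y)=f_*(\NE(X))$ carefully. The inclusion $f_*(\NE(X))\subseteq\overline{\NE}(Y)$ holds because $f$ carries an effective curve either to a point or to a positive multiple of an effective curve. For the reverse inclusion, any irreducible curve $C\subset Y$ is the image of some irreducible curve $C'\subset X$ (as $f$ is surjective), so $[C]$ is a positive multiple of $f_*[C']\in f_*(\NE(X))$; hence $\NE(Y)\subseteq f_*(\NE(X))$. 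Since $f_*$ is linear and $\NE(X)$ is closed polyhedral, the image $f_*(\NE(X))$ is again closed polyhedral, and being squeezed between $\NE(Y)$ and its closure it must equal $\overline{\NE}(Y)$. This gives at once that $\NE(Y)$ is closed and polyhedral.

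For the extremal rays I would use the star-of-a-cone bijection: faces of $\NE(Y)$ correspond via $f_*$ to faces of $\NE(X)$ containing $\NE(f)$. Given an extremal ray $\alpha$ of $\NE(Y)$ with corresponding face $\widehat{\alpha}$ of $\NE(X)$, I pick inside $\widehat{\alpha}$ an extremal ray $\widetilde{\alpha}$ with $\widetilde{\alpha}\cap\NE(f)=\{0\}$ and $f_*(\widetilde{\alpha})=\alpha$. Because $X$ is Fano, $\widetilde{\alpha}$ contains the class of a rational curve $C\subset X$; since $\widetilde{\alpha}$ meets $\NE(f)$ only in $0$, the curve $C$ is not contracted by $f$, so $f(C)$ is a rational curve in $Y$ whose class spans $\alpha$.

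Finally, for an arbitrary face $\alpha$ I would produce $\psi$ by the rigidity argument already set up in \ref{lifting}. Let $h\colon X\to Z$ be the contraction with $\NE(h)=\widehat{\alpha}$, which exists by the Contraction Theorem since $X$ is Fano. As $\NE(f)\subseteq\widehat{\alpha}=\NE(h)$, every curve contracted by $f$ is contracted by $h$, so rigidity factors $h$ as $h=\psi\circ f$ for a contraction $\psi\colon Y\to Z$. It then remains to verify that $\NE(\psi)=\alpha$ and $\rho_Y-\rho_Z=\dim\ker\psi_*=\dim\alpha$. This last verification is the only delicate point: one must check that $\psi$ performs no collapsing beyond $\alpha$, which I expect to follow from $f_*$ restricting to a linear isomorphism of the complementary face $\widetilde{\alpha}$ onto $\alpha$ together with $\ker\psi_*=f_*\langle\widehat{\alpha}\rangle$. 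The main obstacle is thus purely this dimension bookkeeping around the projection $f_*$; everything else is a direct assembly of the observations in \ref{lifting}.
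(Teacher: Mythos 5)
Your proposal is correct and follows essentially the same route as the paper's own argument in \ref{lifting}: the identification $\NE(Y)=f_*(\NE(X))$, the star-of-a-cone correspondence between faces of $\NE(Y)$ and faces of $\NE(X)$ containing $\NE(f)$, the lifting of an extremal ray to an extremal ray $\widetilde{\alpha}$ with $\widetilde{\alpha}\cap\NE(f)=\{0\}$ to produce a rational curve, and the contraction of $\widehat{\alpha}$ (via the Contraction Theorem applied to the Fano $X$) followed by rigidity to produce $\psi$. The final verification you defer is exactly what the paper also leaves as ``not difficult to check,'' and the identity $\ker\psi_*=f_*\langle\widehat{\alpha}\rangle$ that you single out (valid because $\ker h_*$ is the linear span of $\widehat{\alpha}=\NE(h)$ for the Mori contraction $h$) is indeed sufficient to complete it.
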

\end{parg}
\section{Quasi elementary contractions}
\label{qec}
Let $X$ be a smooth variety and $f\colon X \to Y$ a Mori
contraction of fiber type.  Recall that: 
$$\rho_X-\rho_Y=\dim\NE(f)=\dim \ker f_*.$$
Let $F$ be a general fiber and consider 
$\N(F,X)\subseteq\N(X)$. 
We have 
$$\N(F,X)\subseteq\ker f_*,$$
hence $\dim \N(F,X)\leq \rho_X-\rho_Y$.
\begin{definition}
\label{quasiel}
We say that $f$ is \emph{quasi elementary} if 
$$
\N(F,X)=\ker f_*,$$
equivalently if $\dim\N(F,X)=\rho_X-\rho_Y$.
Since $\dim\N(F,X)\leq\rho_F$, if $f$ is quasi elementary we get:
$$
\rho_X\leq \rho_Y+\rho_F.
$$
\end{definition}
\begin{example}
\label{compo}
Let $f\colon X\to Y$ be a Mori contraction of fiber type.
\begin{enumerate}[$\bullet$]
\item If $f$ is elementary, then it is also 
quasi elementary.

This is because $1\leq \dim\N(F,X)\leq\dim\ker f_*=1$.
\item
Suppose that
$\dim X-\dim Y=1$. Then $f$ is quasi elementary if and only if it is
elementary. 
\item
Suppose that $X$ is Fano and $f$ is quasi elementary. If 
$\psi\colon Y\to
Z$ is an elementary contraction of fiber type, then the composition
$\psi\circ f\colon X\to Z$ is quasi elementary. 

In fact since $\psi$ is elementary we have $\dim\ker(\psi\circ f)_*=\dim\ker
f_*+1$. 
 Let $G$ be a general fiber of $\psi\circ f$, then $G\supset F$
 hence $\ker(\psi\circ f)_*\supseteq
\N(G,X)\supseteq\N(F,X)=\ker
f_*$. Choosing a curve $C\subset G$ not
contracted by $f$ shows that $\N(G,X)\supsetneq\N(F,X)$, so
$\N(G,X)=\ker(\psi\circ f)_*$ and $\psi\circ f$ is quasi elementary.
\end{enumerate}
\end{example}
Let's show that the notion of quasi elementary is related to smoothness.
\begin{lemma}
\label{smoothness}
Let $X$ be a smooth variety and $f\colon X\to Y$ a Mori contraction
of fiber type. 
Let $Y_0\subseteq Y_{reg}$ be an open subset over which $f$ is smooth,
and set 
 $X_0:=f^{-1}(Y_0)$. 
If $\codim(X\smallsetminus X_0)\geq 2$ and $Y$ is $\Q$-factorial,
then $f$ is quasi elementary. 
\end{lemma}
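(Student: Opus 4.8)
The goal is to show that $\N(F,X) = \ker f_*$, equivalently that $\dim\N(F,X) = \rho_X-\rho_Y = \dim\ker f_*$. Since we always have $\N(F,X)\subseteq\ker f_*$, it suffices to produce enough curves in a general fiber $F$ whose numerical classes in $X$ span all of $\ker f_*$. My plan is to exploit the smoothness of $f$ over $Y_0$ together with the codimension hypothesis, using a pullback-of-divisors argument via property \eqref{property} (the $L\cdot C=0$ characterization of $f^*(\Pic Y)$).

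First I would argue dually: rather than finding curves spanning $\ker f_*$, I would show that any Cartier divisor $L$ on $X$ with $L\cdot\N(F,X)=0$ already lies in $f^*(\Pic Y)$, i.e.\ satisfies $L\cdot C=0$ for \emph{every} curve $C$ contracted by $f$. Granting this, if $L$ is numerically trivial on $\N(F,X)$ then $L=f^*(M)$ for some $M\in\Pic Y$, and $f^*$ being injective on $\N(Y)^{\vee}$ (because $f_*$ is surjective) forces $[L]$ to vanish on all of $\ker f_*$; this yields $\N(F,X)^{\perp}\subseteq(\ker f_*)^{\perp}$ inside $\N(X)^{\vee}$, hence $\ker f_*\subseteq\N(F,X)$, which is the desired equality.

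Now for the crucial inclusion: suppose $L\in\Pic X$ satisfies $L\cdot C=0$ for every curve $C$ in the general fiber $F$. I want to upgrade this to $L\cdot C=0$ for every curve contracted by $f$. Over the smooth locus $Y_0$, restrict $L$ to $X_0=f^{-1}(Y_0)$. Since $f|_{X_0}$ is smooth and proper with connected fibers, and $L$ is fiberwise numerically trivial on the (flat) family of fibers over $Y_0$, a standard argument shows $L|_{X_0}$ is the pullback of a line bundle from $Y_0$ (the fiberwise degree-zero condition together with connectedness of $Y_0$ and flatness makes $L$ fiberwise algebraically equivalent to $\ol_{F}$, hence $f_*L$ is a line bundle on $Y_0$ and $L|_{X_0}=f^*(f_*L)$). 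The hypothesis $\codim(X\smallsetminus X_0)\geq 2$ then lets me extend: the line bundle $f_*L$ on $Y_0$ extends across the codimension-$\geq 2$ complement because $Y$ is normal, and by $\Q$-factoriality of $Y$ the corresponding Weil divisor is $\Q$-Cartier, so after passing to a multiple $mL$ we get $mL = f^*(M)$ on all of $X$ (the two bundles agree on $X_0$, whose complement has codimension $\geq 2$ in the smooth, hence factorial, variety $X$). Therefore $mL\cdot C=0$ for every contracted curve $C$, and so $L\cdot C=0$ as well.

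\textbf{The main obstacle.} The delicate point is the extension step, where the codimension-$\geq 2$ and $\Q$-factoriality hypotheses are both essential and must be used correctly: I need that a line bundle defined on $X_0$, equal to a pullback from $Y_0$, extends to a pullback from $Y$ after clearing denominators. The smoothness (hence factoriality) of $X$ guarantees that line bundles extend uniquely across codimension-$\geq 2$ loci, while $\Q$-factoriality of $Y$ is exactly what ensures the extended divisor class on $Y$ is $\Q$-Cartier so that it can be pulled back. I expect the fiberwise-triviality-implies-pullback claim over $Y_0$ to be routine once $f|_{X_0}$ is smooth and proper with connected fibers, but I would be careful to invoke flatness and the base-change/seesaw-type reasoning cleanly, and to keep track of the multiple $m$ needed to make everything Cartier.
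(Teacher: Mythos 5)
Your plan --- dualize the statement, show that any $L\in\Pic X$ with $L\cdot\N(F,X)=0$ restricts on $X_0$ to a pullback from $Y_0$, then use $\codim(X\smallsetminus X_0)\geq 2$ together with $\Q$-factoriality of $Y$ to promote a multiple $L^{\otimes l}$ to a global pullback $f^*(M)$ --- is viable, and its second half is essentially identical to the paper's argument (the paper phrases it as injectivity of the restriction map from $(\Pic X)\otimes\R/f^*(\Pic Y)\otimes\R$ to $(\Pic X_0)\otimes\R/f_0^*(\Pic Y_0)\otimes\R$). Where you genuinely diverge is the first half: the paper works on the curve side, using the relative Cone Theorem for $f_0=f_{|X_0}$ and Wi{\'s}niewski's result that the locus of every extremal ray of $\NE(X_0/Y_0)$ dominates $Y_0$, to get $\N(F_y,X)=\N(X_0/Y_0)$ for every $y\in Y_0$; you instead argue on the divisor side. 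That divergent step is precisely where your proof has a gap.

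The parenthetical justification you give for it is false as stated: for a smooth proper morphism with connected fibers and connected base, ``$L$ fiberwise algebraically equivalent to $\ol_F$'' does \emph{not} imply that $f_*L$ is a line bundle and $L=f^*(f_*L)$. The Poincar\'e bundle on $E\times\Pic^0(E)\to\Pic^0(E)$, $E$ an elliptic curve, is fiberwise algebraically (even numerically) trivial, yet its pushforward is not invertible and it is not a pullback. What makes your step true in the present situation --- and what you never invoke --- is that the fibers of $f_0$ are Fano manifolds (a fact the paper uses elsewhere, e.g.\ in the proof of Theorem \ref{face}): a Fano manifold is simply connected with $H^1(\mathcal{O})=0$, so $\Pic(F_y)$ injects into the torsion-free group $H^2(F_y,\Z)$, and a \emph{numerically} trivial line bundle on $F_y$ is actually \emph{trivial}; only then do cohomology-and-base-change and the evaluation map $f_0^*(f_0)_*L\to L$ give $L_{|X_0}\in f_0^*(\Pic Y_0)$. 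Relatedly, your hypothesis gives numerical triviality of $L$ only on \emph{one} general fiber $F$, whereas your seesaw argument needs it on \emph{every} fiber over $Y_0$; this propagation also requires an argument, e.g.\ that $c_1(L)$ restricted to the fibers is a flat section of the local system $R^2(f_0)_*\Z$ over the connected base $Y_0$, hence is torsion at every point as soon as it is torsion at one, and then vanishes since the fibers (all diffeomorphic to the Fano general fiber) have torsion-free $H^2$. Both points are fixable, and with these insertions your proof closes and constitutes a genuinely different, more topological route than the paper's cone-theoretic one; but as written the crucial implication rests on an incorrect general statement.
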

We remark that the converse to Lemma \ref{smoothness} does not
hold (take for
instance a smooth Fano 3-fold $X$ with an elementary
contraction $X\to\pr^1$ which is not a smooth morphism). 
Moreover the hypothesis of $\Q$-factoriality on $Y$ is 
necessary, see the contraction $f\colon V\to Y$ in
example \ref{delpezzo}.
\begin{proof}
Set $f_0:=f_{|X_0}\colon
X_0\to Y_0$. 
Following the notation of \cite[\S 0-1]{KMM}, consider
$$\NE(X_0/Y_0)\subset\N(X_0/Y_0).$$
Since $f_0$ is a Mori contraction, $\NE(X_0/Y_0)$ is closed and
polyhedral by the relative version of the Cone Theorem, see 
\cite[Theorem 4-2-1]{KMM}. 

On the other hand $\N(X/Y)=\ker f_*\subseteq\N(X)$, and the inclusion 
$X_0\hookrightarrow X$ induces a
natural injective homomorphism
$$\N(X_0/Y_0)\hookrightarrow \ker f_*.$$
With a slight abuse of notation, we will consider $\N(X_0/Y_0)$ as a
subspace of $\N(X)$.

For every $y\in Y_0$ we have $i_*(\NE(F_y))\subseteq\NE(X_0/Y_0)$. 
Consider an  
 extremal ray $\alpha$ of $\NE(X_0/Y_0)$. Then \cite[Proposition
1.3]{wisndef} says that $\Lo(\alpha)$ dominates $Y_0$ via $f$, hence
$\alpha\subseteq i_*(\NE(F_y))$ for every $y\in Y_0$. Repeating this
for every extremal ray of $\NE(X_0/Y_0)$, we get
$$i_*(\NE(F_y))=\NE(X_0/Y_0)$$
for every $y\in Y_0$, in particular 
\stepcounter{thm}
\begin{equation}
\label{darcy}
\N(F_y,X)=\N(X_0/Y_0)\subseteq\ker f_*\subseteq\N(X).\end{equation}

The homomorphism dual to the inclusion $\N(X_0/Y_0)\hookrightarrow
\ker f_*$
is
$$r\colon \frac{(\Pic X)\otimes{\R}}{f^*(\Pic Y)\otimes{\R}}
\la\frac{(\Pic
  X_0)\otimes{\R}}{f_0^*(\Pic Y_0)\otimes{\R}}$$ 
induced by the restriction $\Pic X\to\Pic X_0$
(see \cite[\S 0-1 and Lemma 3-2-5 (2)]{KMM}). 
Clearly $r$ is surjective.

Let $L\in\Pic X$ be 
  such that $L_{|X_0}=f_0^*(M_0)$ for some $M_0\in\Pic Y_0$. Since $Y$
  is $\Q$-factorial, there exists $M\in\Pic Y$ such that
  $M_{|Y_0}=M_0^{\otimes l}$, $l\in\Z_{\geq 1}$. Then $L^{\otimes
  l}\otimes f^*(M)^{\otimes(-1)}$ 
is trivial on $X_0$, and by our hypothesis trivial on
  $X$. Thus $r$ is an isomorphism.
Dually, this is gives $\N(F_y,X)=\N(X_0/Y_0)=\ker f_*$, so $f$ is
quasi elementary.
\end{proof}
\begin{remark}
Observe that \eqref{darcy} shows that $\N(F_y,X)$ does not depend on
$y\in Y_0$, so 
that the condition $\N(F_y,X)=\ker f_*$ can be
checked for an arbitrary $y\in Y_0$. 
\end{remark}
\begin{remark}
If $f\colon X\to Y$ is quasi elementary, then $\N(F_0,X)=\ker f_*$ for
\emph{every} fiber $F_0$ of $f$ (with the reduced structure). 

In fact we have $\N(F_0,X)\subseteq\ker f_*$. Moreover if
$f_0\colon X_0\to Y_0$ is as in the proof of Lemma \ref{smoothness},
$\NE(X_0/Y_0)$ has dimension $\rho_X-\rho_Y$. For any fixed
extremal ray $\alpha$ of $\NE(X_0/Y_0)$,  \cite[Proposition
1.3]{wisndef} says that $\Lo(\alpha)$ dominates $Y_0$. Then taking a
family of curves whose class is in $\alpha$ and their degenerations,
we see that $\alpha\subset\N(F_0,X)$. This implies 
that $\N(F_0,X)=\ker
f_*$.
\end{remark}
\begin{remark} 
Suppose that $X$ is smooth and Fano, $f\colon X\to Y$  a
contraction of fiber type, and $F$ a general fiber.

In general the push-forward $i_*\colon\N(F)\to\N(X)$ does not need
to be injective: for instance there are smooth Fano threefolds that
have an elementary contraction onto $\pr^1$, with fibers Del Pezzo
surfaces with $\rho>1$. 
This is related to the monodromy of the fibration $f$. 

Consider an
open subset $Y_0$ as in Lemma \ref{smoothness} and let $y\in Y_0$.
The dimension of $\N(F_y,X)$ is equal to the dimension of the image of
the restriction
$$ (\Pic X)\otimes{\Q}=H^2(X,\Q)\la H^2(F_y,\Q)=(\Pic F_y)
\otimes{\Q}.$$
In turn this is equal to the dimension of the linear
subspace of $H^2(F_y,\Q)$ which is invariant for the monodromy
action of $\pi_1(Y_0,y)$ (see for instance \cite[Chapter~15]{voisin}). 
Hence $\dim\N(F_y,X)=\rho_F$ if and only if the monodromy
action is trivial.
\end{remark}
\begin{example}
Let $X$ be a smooth Fano variety, $f\colon X\to Y$ a non trivial
contraction of
fiber type with $Y$ smooth,  and $F$ a general fiber.
Suppose that $f$ is smooth outside a
finite number of points of $Y$, and that there are no fibers
of codimension~$1$.
Then $Y$ is Fano, $f$ is quasi elementary, and $\rho_X=\rho_Y+\rho_F$.

In fact $Y$ is Fano by \cite[Theorem 3]{miyaoka93}, in particular it
is simply connected, and $\dim Y\geq 2$.
Then
$Y\smallsetminus\{y_1,\dotsc,y_m\}$
 stays simply connected, so the monodromy action on 
$H^2(F,\Q)$ is trivial, and $\dim\N(F,X)=\rho_F$. On the other hand 
$f$ is quasi elementary 
by Lemma \ref{smoothness}, so $\rho_X=\dim\N(F,X)+\rho_Y=
\rho_F+\rho_Y$.
\end{example}
The following two lemmas give some basic properties of quasi
elementary contractions.
\begin{lemma}
\label{nonequilocus}
Let $X$ be a smooth variety and $f\colon X\to Y$ a quasi elementary
Mori contraction of fiber type.
\begin{enumerate}[$(i)$]
\item If $D$ is a prime divisor in $X$ such that $f(D)\subsetneq Y$,
  then $f(D)$ is a Cartier divisor, and $D=f^*(f(D))$.
\item
The locus where $f$ is not equidimensional has codimension at
least $3$ in $Y$.
\end{enumerate}
\end{lemma}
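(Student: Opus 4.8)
The plan is to exploit the defining property of quasi elementary contractions, namely $\N(F,X)=\ker f_*$, together with the description of $\NE(Y)$ as the projection of $\NE(X)$ from $\NE(f)$ (Lemma \ref{easy}). For part $(i)$, let $D$ be a prime divisor with $f(D)\subsetneq Y$. First I would note that $D$ is contracted by $f$ onto a proper subvariety, so every curve $C\subset D$ that is contracted by $f$ has $[C]\in\ker f_*$. The key is to show that $D$ itself is numerically a pullback, which by property \eqref{property} (the projection formula characterisation of $f^*(\Pic Y)$) amounts to showing $D\cdot\gamma=0$ for every curve $\gamma$ contracted by $f$. Because $f$ is quasi elementary, $\ker f_*=\N(F,X)$ is generated by classes of curves lying in the \emph{general} fiber $F$; since $F$ is disjoint from $D$ (as $f(D)\neq Y$, a general fiber meets $D$ in the empty set), we get $D\cdot C=0$ for such curves. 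Hence $D\cdot\ker f_*=0$, so $D\in f^*(\Pic Y)\otimes\Q$; writing $D=f^*(B)$ for a $\Q$-Cartier $B$ and checking that $B=f(D)$ and that $B$ is in fact Cartier (using smoothness of $X$, so $D$ is Cartier, and surjectivity/injectivity of $f^*$ on Picard groups away from $\NE(f)$) gives $D=f^*(f(D))$ with $f(D)$ Cartier.

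**The non-equidimensionality locus.**
For part $(ii)$, set $S\subseteq Y$ to be the locus over which $f$ is not equidimensional, i.e.\ the image of the fibers of dimension $>\dim X-\dim Y$. I would argue by contradiction: suppose $\codim_Y S\leq 2$, and pick a general point $y$ in a codimension $\leq 2$ component $S_0$ of $S$. The idea is to intersect with general hyperplane sections (or pull back a general complete intersection curve through $y$) to reduce to a situation where the non-equidimensional fiber forces a divisor $D\subset X$ with $f(D)$ of codimension $\geq 2$, contradicting part $(i)$. Concretely, if $F_y=f^{-1}(y)$ has a component of dimension $>\dim F$, then cutting $X$ by $\dim Y-2$ general members of $f^*|\text{ample}|$ produces a subvariety $X'\to Y'$ (with $Y'\subset Y$ a surface through $y$) where the jumping fiber becomes a divisor in $X'$ mapping to the point $y$, i.e.\ a $D'$ with $f(D')$ a point of codimension $2$; transporting this back, the divisorial component of $f^{-1}(S_0)$ gives a prime divisor $D$ with $2\leq\codim f(D)$, contradicting $(i)$.

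**The main obstacle.**
I expect the genuinely delicate step to be part $(ii)$, specifically the dimension-counting that turns a low-codimension non-equidimensional locus into an honest divisor violating $(i)$. The cleanest route is probably to use the general fact that for a fiber-type contraction the locus where fiber dimension jumps by one has codimension $\geq 2$ automatically (by upper semicontinuity and the fact that $f$ is equidimensional in codimension $1$ over the $\Q$-factorial smooth locus), and then show that \emph{quasi elementarity} upgrades this bound to codimension $\geq 3$. The mechanism is that an extra jump creates a divisor $D$ contracted to something of codimension $\geq 2$; by $(i)$ any divisor with $f(D)\subsetneq Y$ must be a full fiber $f^*(f(D))$, forcing $f(D)$ to be a \emph{divisor}, i.e.\ $\codim f(D)=1$ — so no contracted divisor can map to codimension $\geq 2$. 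Thus the real content is showing that a codimension $\leq 2$ jumping locus must be accompanied by such a contracted divisor; handling the set-theoretic bookkeeping here, and ensuring the general hyperplane sections stay smooth and preserve the quasi elementary structure, is where the care is needed.
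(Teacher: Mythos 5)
Your overall strategy is the paper's: in $(i)$ you use disjointness of $D$ from a general fiber $F$ together with $\N(F,X)=\ker f_*$ to get $D\cdot C=0$ for every curve contracted by $f$, hence $D\in f^*(\Pic Y)$ (integrally, not just with $\Q$-coefficients, since $X$ is smooth and the relevant property of Mori contractions holds in $\Pic$); in $(ii)$ you derive the contradiction from a contracted divisor whose image has codimension at least $2$. But two steps are not actually closed in your write-up.

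In $(ii)$, the hyperplane-section reduction is a detour that manufactures exactly the problems you flag: every pullback $f^*H$ with $H\ni y$ contains the whole fiber $F_y$, so Bertini gives no smoothness of $X'$ along $F_y$, and there is no reason the restricted contraction $X'\to Y'$ remains quasi elementary, so $(i)$ cannot be applied to it. None of this is needed. The step you defer as ``the real content'' is a one-line count on $f$ itself: if $K\subset Y$ is an irreducible closed subset over which the general fiber has dimension at least $\dim X-\dim Y+1$, then $\dim f^{-1}(K)\geq \dim X-\dim Y+1+\dim K$, while $f^{-1}(K)\subsetneq X$ gives $\dim f^{-1}(K)\leq\dim X-1$; hence $\dim K\leq\dim Y-2$, and if $\dim K=\dim Y-2$ then $f^{-1}(K)$ has a component of dimension $\dim X-1$, a prime divisor whose image lies in $K$, contradicting $(i)$. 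That is the paper's entire proof.

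In $(i)$, the step you gloss --- identifying the Cartier divisor $D'$ with $D=f^*(D')$ as the \emph{reduced} image, i.e.\ excluding $D'=m\,f(D)$ with $m\geq 2$ --- is where the real work lies, and an appeal to ``surjectivity/injectivity of $f^*$ on Picard groups'' is not sufficient: at this stage $Y$ is only known to be normal (its factoriality, Lemma~\ref{target}~$(i)$, is deduced \emph{from} Lemma~\ref{nonequilocus}), so $f(D)$ is not yet known to be $\Q$-Cartier and cannot be pulled back globally. The paper restricts to $U:=f^{-1}(Y_{reg})$, where $f(D)\cap Y_{reg}$ is Cartier and $D_{|U}=m(f_{|U})^*(f(D)\cap Y_{reg})$, extends $(f_{|U})^*(f(D)\cap Y_{reg})$ to a divisor $D''$ on the smooth $X$, and uses $\codim(X\smallsetminus U)\geq 2$ to get $D=mD''$, whence $m=1$ by primeness of $D$.
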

\begin{proof}
 Since $f$ is quasi elementary and
$D$ is disjoint from the general fiber, we have $D\cdot C=0$ for every
curve $C\subset X$ contracted by $f$. By property (\ref{prel}.\theproperty) of
Mori contractions,
there exists an effective
Cartier divisor $D'$ on $Y$ such that $D=f^*(D')$. 
Moreover ${D}'$ is supported on
  $f(D)$, so there exists $m\in\Z_{>0}$ such that ${D}'=mf(D)$.

Set $U:=f^{-1}(Y_{reg})$ and 
observe that $X\smallsetminus U$ has codimension at least two in $X$.
In $Y_{reg}$ the intersection 
$f(D)\cap Y_{reg}$ is a (non trivial) Cartier divisor
 and $D_{|U}=(f_{|U})^*(mf(D))_{|Y_{reg}}=
m(f_{|U})^*(f(D)\cap Y_{reg})$. Since $X$ is smooth,
there exists an effective divisor $D''$ in $X$ such that $D''_{|U}= 
(f_{|U})^*(f(D)\cap Y_{reg})$. Then $D_{|U}=m D''_{|U}$, so $D=m D''$
and $m=1$ because $D$ is prime.

Let's show $(ii)$. Let $K\subset Y$ be 
an irreducible closed subset
such that the general
fiber of $f$ over $K$ has dimension at least $\dim X -\dim Y+1$. Then
$$\dim X-1\geq \dim f^{-1}(K)\geq \dim X -\dim Y+1+\dim K,$$
so $\dim K\leq\dim Y-2$. If by contradiction $\dim K=\dim Y-2$, then 
$\dim f^{-1}(K)\geq \dim X-1$.
Consider a prime divisor $D$ contained
in $f^{-1}(K)$: we have $f(D)\subseteq K$, which contradicts $(i)$.
\end{proof}
We generalize to quasi elementary contractions some known properties
of elementary contractions of fiber type.
In particular $(ii)$ is shown  
in \cite{ABW} in the elementary case.
\begin{lemma}
\label{target}
Let $f\colon X\to Y$ be a Mori contraction of fiber type, with $X$ smooth.
\begin{enumerate}[$(i)$]
\item
If  $f$ is quasi elementary, then $Y$ is factorial and
  has canonical singularities.
\item 
If $Y$ is a surface and $f$ is equidimensional, then $Y$ is smooth.
\item
If $Y$ is a surface and $f$ is quasi elementary, then $f$ is
equidimensional and $Y$ is smooth.
\item If $\dim Y=3$ and $f$ is quasi elementary, then $Y$ has isolated
  singularities. 
\end{enumerate}
\end{lemma}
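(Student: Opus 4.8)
My plan is to prove the four parts roughly in the stated order, using the cone-theoretic machinery from \S\ref{prel} and the structure of quasi elementary contractions. For part $(i)$, the key point is property $(\ref{prel}.\theproperty)$ of Mori contractions together with quasi elementarity. Let $D'$ be a Weil divisor on $Y$; pulling it back (on the regular locus, where $f$ makes sense as a morphism to a smooth target, and using that $X$ is smooth hence factorial) I would produce a divisor $D$ on $X$ that is $f$-trivial, i.e.\ $D\cdot C=0$ for every curve $C$ contracted by $f$. This is exactly the situation already analyzed in Lemma \ref{nonequilocus}$(i)$, where we showed $D=f^*(f(D))$ with $f(D)$ Cartier; running that argument in reverse (taking $D$ to be the proper transform / pullback of a given Weil divisor $D'$ on $Y$) shows that some multiple of $D'$, and then $D'$ itself, is Cartier. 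For the canonical singularities, I would write $K_X = f^*(K_Y) + (\text{discrepancy})$ and use that $-K_X$ is $f$-ample together with the general smoothness of the fibers to control discrepancies; alternatively one invokes that a fiber-type contraction from a smooth (hence canonical) variety, with $K_Y$ $\Q$-Cartier as just established, has canonical base by the standard adjunction/discrepancy estimate for contractions of Fano type.

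For part $(ii)$, the hypotheses are that $Y$ is a surface and $f$ is \emph{equidimensional}. Here I would argue that every fiber of $f$ is one-dimensional (since $\dim X - \dim Y = \dim X - 2$ fiber dimension is forced to be constant by equidimensionality, but the relevant point is equidimensionality onto the surface), so $f$ is a flat family of curves over the normal surface $Y$. The smoothness of $Y$ should then follow from a local structure result: a fiber-type contraction from a smooth variety that is equidimensional onto a surface has smooth base. Concretely I expect to reduce to showing $Y$ has no singular points by examining the local Picard group or by using that the generic fiber together with equidimensionality forces the relative Picard sheaf to be locally free, hence $Y$ smooth; this is the content of the elementary-case result in \cite{ABW} that the lemma cites, and I would adapt that argument.

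Part $(iii)$ is where I would spend most of the effort, and I expect the equidimensionality claim to be the \textbf{main obstacle}. By Lemma \ref{nonequilocus}$(ii)$, when $f$ is quasi elementary the non-equidimensional locus in $Y$ has codimension at least $3$; but $\dim Y = 2$, so this locus must be empty, i.e.\ $f$ is automatically equidimensional. This is the crucial deduction, and once it is in hand, part $(iii)$ follows immediately by applying part $(ii)$. So the real work is already packaged into Lemma \ref{nonequilocus}$(ii)$; I would simply invoke it and then chain $(ii)$. Finally, for part $(iv)$ with $\dim Y = 3$ and $f$ quasi elementary, the same codimension bound from Lemma \ref{nonequilocus}$(ii)$ says the non-equidimensional locus has codimension at least $3$ in the threefold $Y$, hence is at most a finite set of points. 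Away from that finite set $f$ is equidimensional, and over the regular part an equidimensional fiber-type contraction from a smooth variety is well-behaved; combined with part $(i)$ (so $Y$ is factorial with canonical singularities) I would conclude that the singular locus $\Sing(Y)$ is contained in this non-equidimensional locus, and therefore $Y$ has only isolated singularities. The point to check carefully is that where $f$ is equidimensional onto a normal threefold, the total space being smooth forces $Y$ to be smooth there — the same local argument as in $(ii)$, now in dimension three — so that all singularities are confined to the finite non-equidimensional locus.
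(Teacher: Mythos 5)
Your overall decomposition follows the paper: factoriality in $(i)$ via Lemma \ref{nonequilocus} $(i)$, and the deduction of $(iii)$ from the codimension bound of Lemma \ref{nonequilocus} $(ii)$ plus part $(ii)$, are exactly the paper's arguments. But the remaining steps have genuine gaps. In $(i)$, the formula $K_X=f^*(K_Y)+(\text{discrepancy})$ is meaningless for a contraction of fiber type (such a decomposition exists only for birational morphisms), and the ``standard discrepancy estimate for contractions of Fano type'' you appeal to gives log terminal singularities of the base, not canonical ones (it could be repaired using that $K_Y$ is Cartier, but that is a heavier and different route). The paper's actual argument needs an ingredient you never mention: $Y$ has \emph{rational} singularities by \cite[Corollary 7.4]{kollarhigher}, and since $Y$ is factorial, $K_Y$ is Cartier, so rational implies canonical by \cite[Corollary 5.24]{KollarMori}. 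Rational singularities are reused in $(ii)$, where they give that $Y$ is $\Q$-factorial with isolated singularities. Moreover, in $(ii)$ ``examining the local Picard group'' and ``adapting \cite{ABW}'' is not yet a proof: the proposition of \cite{ABW} that one adapts requires $Y$ to have \emph{quotient} singularities, and in the non-elementary case establishing this is the real content. The paper obtains it from equidimensionality via plurigenera: an equidimensional $f$ is ``non-degenerate'' in the sense of \cite[Definition 1.14]{flennerzaid}, so \cite[Corollary 1.27]{flennerzaid} gives $\delta_m(Y,y_0)=0$ for all $m>0$, which characterizes quotient singularities by \cite[Theorem 3.9]{watanabe}; only then does the (analytic) argument of \cite[Proposition 1.4]{ABW} apply.

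The most serious gap is in $(iv)$. Your plan rests on the claim that wherever $f$ is equidimensional over the normal threefold $Y$, the base is smooth, ``by the same local argument as in $(ii)$, now in dimension three.'' No such argument is available: every ingredient of $(ii)$ is surface-specific (quotient surface singularities, Watanabe's plurigenera criterion, the surface proposition of \cite{ABW}), and no three-dimensional analogue is known --- indeed the paper devotes all of Section~\ref{3fold} to proving smoothness of three-dimensional targets under the additional hypothesis $\rho_Y\geq 4$, which would be largely pointless if equidimensionality alone forced smoothness. The paper instead reduces $(iv)$ to the surface case by cutting with hyperplanes: let $S\subset Y$ be a general hyperplane section and $D:=f^{-1}(S)$; then $D$ is smooth by Bertini, $S$ is a normal surface, and $f_{|D}\colon D\to S$ is equidimensional by Lemma \ref{nonequilocus}. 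Here is where quasi elementarity is used beyond Lemma \ref{nonequilocus} $(ii)$: since $D$ is disjoint from the general fiber, $D\cdot C=0$ for every curve $C$ contracted by $f$, so by adjunction $-K_D\cdot C=-K_X\cdot C>0$ and $f_{|D}$ is an equidimensional Mori contraction. Part $(ii)$ then shows that $S$ is smooth, and smoothness of the general hyperplane section gives $\dim\Sing Y=0$. Note also that the conclusion reached this way is weaker than what you assert: one does not show that $\Sing Y$ is contained in the non-equidimensional locus, only that it is finite.
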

\begin{proof}
$(i)$ 
 Let $D$ be a prime Weil divisor in $Y$ and let $D'$ be a prime divisor
  in $X$ such that $f(D')=D$. Then Lemma \ref{nonequilocus} $(i)$
 yields that $D$ is Cartier.

Thus $Y$ is factorial, in particular $K_Y$ is Cartier. 
It is known that $Y$ has rational
singularities, see \cite[Corollary 7.4]{kollarhigher}. Then $Y$ has
also canonical singularities, see \cite[Corollary 5.24]{KollarMori}.

\medskip

$(ii)$
When $f$ is elementary, this is \cite[Proposition
  1.4.1]{ABW}. In general, observe that $Y$ is a normal surface with 
rational singularities, in particular it is $\Q$-factorial and has
isolated singularities. 

We want to show that in fact $Y$ has quotient
singularities, using results from \cite{watanabe} and
\cite{flennerzaid}.
More precisely, let $y_0\in Y$ be a singular point. In
\cite[Definition 1.4]{watanabe} 
and \cite[Definition~1.25]{flennerzaid} one can find the definition
of the plurigenera $\delta_m(Y,y_0)$ of $Y$ in $y_0$, for $m\in\Z_{> 0}$. 
Since $f$ is equidimensional,
then it is ``non degenerate'' in the sense of \cite[Definition
  1.14]{flennerzaid}. Then \cite[Corollary~1.27]{flennerzaid} gives
$\delta_m(Y,y_0)=0$ for every $m\in\Z_{>0}$. This is equivalent to
saying that $y_0$ is a quotient singularity by \cite[Theorem
  3.9]{watanabe}. 

Hence $Y$ has quotient singularities, and we can apply
\cite[Proposition~1.4]{ABW} to deduce that $Y$ is actually
smooth. Observe that in \cite{ABW} the contraction $f$ is assumed to
be elementary, but the proof works word for word in the case of an
equidimensional Mori contraction. Let us also remark that by the
definition of quotient singularities, we can cover $Y$ by open subsets
in the complex topology which are quotients of an open subset of
$\C^2$ by a finite group. Hence in the proof of \cite[Proposition
  1.4]{ABW} 
one has actually to work in the analytic category, however
everything works in the same way.

\medskip

$(iii)$  This follows immediately from Lemma \ref{nonequilocus} and $(ii)$.

\medskip

$(iv)$ By Lemma \ref{nonequilocus}, $f$ can have at most isolated
fibers of dimension $n-2$. Let $S\subset Y$ be a general hyperplane
section and $D:=f^{-1}(S)$. Then $S$ is a normal surface (see \cite[Lemma
5.30]{KollarMori}) and $f_{|D}\colon D\to S$ is
equidimensional. Moreover 
$D$ is general in a base point free
linear system, so it is smooth. 
Since $D$ is disjoint from the general fiber of $f$ and $f$ is quasi
elementary, we have $D\cdot C=0$ for every curve $C\subset X$
contracted by $f$. In particular if $C\subset D$ we get
$$-K_D\cdot C=-K_X\cdot C>0,$$
so $f_{|D}$ is an equidimensional Mori contraction. Then $S$ is smooth
by $(ii)$, and this yields $\dim\Sing Y=0$.
\end{proof}
Thus the target of a quasi elementary contraction has reasonable
singularities.
The following simple remark will be very useful.
\begin{remark}\label{fano}
Let $X$ be a smooth Fano variety and $f\colon X\to Y$ a quasi
elementary contraction, so that $Y$ is factorial and has canonical
singularities by Lemma \ref{target}.

Then $Y$ is Fano if and only if $-K_Y\cdot\alpha>0$ for every extremal
ray $\alpha$ of $\NE(Y)$.

Equivalently, $Y$ is Fano if and only if every elementary contraction
$\psi\colon Y\to Z$ is a Mori contraction.

This is a straightforward consequence of Lemma \ref{easy}.
\end{remark}
\begin{parg}
\label{boh}
{\bf Elementary contractions of the target.} 
Let $X$ be a smooth projective variety and $f\colon X\to Y$ a quasi
elementary Mori contraction of fiber type. Recall that $Y$ is
factorial with canonical singularities by Lemma \ref{target}.

Let $\psi\colon Y\to Z$ be a contraction. We say that
\emph{$\psi$ has a lifting} if there exist a Mori contraction 
 $\ph\colon X\to Y$ such that $\NE(\ph)\cap\NE(f)=\{0\}$,
 $\rho_X-\rho_W=\rho_Y-\rho_Z$, and a commutative diagram:
\stepcounter{thm}
\begin{equation}
\label{pizza}
\xymatrix{
X\ar[d]_f
\ar[r]^{\ph}& W\ar[d]^g \\
Y\ar[r]_{\psi} & Z}\end{equation}
We will also say that $\ph$ is a \emph{lifting} of $\psi$.

Notice that $\psi$ is elementary if and only if $\ph$ is elementary. 
We are interested in comparing properties of $\ph$ and $\psi$.

When $X$ is Fano, any contraction
$\psi\colon Y\to Z$ has a lifting, 
as explained in \ref{lifting}.
Conversely, in Theorems \ref{face} and \ref{face2}  we will give
conditions on a Mori
contraction $\ph\colon X\to W$ to be a lifting of some $\psi$.
\begin{thm}
\label{secondo}
Let $X$ be a smooth projective variety and $f\colon X\to Y$ a quasi
elementary Mori contraction of fiber type.

Let $\psi\colon Y\to Z$ be a
birational elementary contraction 
with fibers of dimension at most $1$. Assume that $\psi$ has a lifting
$\ph\colon X\to W$ as in \eqref{pizza}.
Then the following holds:
\begin{enumerate}[$(i)$]
\item $W$ is smooth and $\ph$ is the blow-up of a smooth subvariety of
  codimension $2$;
\item $g$ is a quasi elementary Mori contraction
and $Z$ is factorial with canonical
  singularities; 
\item $\psi$ is divisorial,
$\Exc(\ph)=f^*(\Exc(\psi))$, and $-K_Y\cdot \NE(\psi)\geq 0$.
\item If $\dim Y=3$, then
 $\psi$ is the blow-up of a smooth curve $C\subset Z_{reg}$.
\item If $\dim Y=2$, then:
\begin{enumerate}[$(v.a)$]
\item
$Y$ and $Z$ are smooth, $\psi$ is the blow-up of $z_0\in Z$,
and $g$ has smooth fiber $F_0$ over $z_0$;
\item
 $\ph$ is the blow-up of $F_0$,
  $\Exc(\ph)\cong\pr^1\times F_0$,
and
$f_{|\Exc(\ph)}$, $\ph_{|\Exc(\ph)}$ are the two projections;
\item if $X$ is Fano, then $W$ is Fano.
\end{enumerate}
\end{enumerate}
\end{thm}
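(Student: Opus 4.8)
The plan is to analyze the lifting $\ph\colon X\to W$ first and then transfer its structure across the square \eqref{pizza} to $\psi$ and $g$. Since $\psi$ is elementary and $\rho_X-\rho_W=\rho_Y-\rho_Z=1$, the map $\ph$ is elementary. First I would check that $\ph$ is \emph{birational}: if $[C]\in\NE(\ph)\setminus\{0\}$, then $C$ is not contracted by $f$ (else $[C]\in\NE(\ph)\cap\NE(f)=\{0\}$), and from $\psi_*f_*[C]=g_*\ph_*[C]=0$ together with the face property $\NE(X)\cap\ker f_*=\NE(f)$ one gets $f_*[C]\in\NE(\psi)\setminus\{0\}$, so $f(C)\subseteq\Exc(\psi)$ and $\Lo(\NE(\ph))\subseteq f^{-1}(\Exc(\psi))\subsetneq X$. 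As an elementary contraction of fiber type would satisfy $\Lo(\NE(\ph))=X$, the map $\ph$ is birational. Next I would bound its fibers: for a nontrivial fiber $G=\ph^{-1}(w)$, $f(G)$ lies in the fiber $\psi^{-1}(g(w))$ of dimension $\le 1$, and $f|_G$ has finite fibers because any positive-dimensional fiber of $f|_G$ would contain a curve contracted by both $f$ and $\ph$; hence $\dim G\le 1$. The classification of birational Mori contractions of a smooth variety with one-dimensional fibers (Ando; see \cite{wisn,ABW}) then gives that $W$ is smooth and $\ph$ is the blow-up of a smooth codimension-$2$ subvariety $B\subseteq W$, with $E:=\Exc(\ph)$ a $\pr^1$-bundle over $B$ and $K_X=\ph^*K_W+E$. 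This is $(i)$.

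Now $E$ is covered by the fibers $\ell\cong\pr^1$ of $\ph|_E\colon E\to B$, and each $f(\ell)$ is a fiber of $\psi$, so $f(E)\subseteq\Exc(\psi)\subsetneq Y$. By Lemma \ref{nonequilocus}$(i)$, $E=f^*(f(E))$ with $f(E)$ a prime Cartier divisor; a dimension count gives $\dim f(E)=\dim Y-1$, so $\psi$ is divisorial and $\Exc(\psi)=f(E)$, whence $\Exc(\ph)=f^*(\Exc(\psi))$, proving the first assertions of $(iii)$. For $g$, a general fiber $G_W$ satisfies $\ph^{-1}(G_W)=f^{-1}(y)$ for a general $y\notin\Exc(\psi)$; as $f^{-1}(y)$ avoids $E$, $\ph$ restricts to an isomorphism $f^{-1}(y)\xrightarrow{\sim}G_W$. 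Since $\ker\ph_*\cap\ker f_*=\{0\}$ and $\dim\ker f_*=\dim\ker g_*$, the map $\ph_*$ sends $\ker f_*$ isomorphically onto $\ker g_*$, so $\N(G_W,W)=\ph_*\N(F,X)=\ph_*\ker f_*=\ker g_*$ and $g$ is quasi elementary.

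The main obstacle is to show that $g$ is a \emph{Mori} contraction; once this holds, Lemma \ref{target}$(i)$ yields that $Z$ is factorial with canonical singularities, completing $(ii)$. I would prove it curve by curve, exploiting that $E=f^*(\Exc(\psi))$ is $f$-vertical. Given any curve $\Gamma$ contracted by $g$, lying in a fiber $G_z$: if $z\notin\psi(\Exc(\psi))$ then $G_z$ avoids $B$ and $\Gamma$ has an isomorphic proper transform; if $z\in\psi(\Exc(\psi))$ then $\ph^{-1}(G_z)=f^{-1}(\psi^{-1}(z))\subseteq E$, so $G_z\subseteq B$, and inside $E|_\Gamma$ a fiber of $f|_{E|_\Gamma}$ provides a curve $\Sigma$. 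In either case $\Sigma$ is contracted by $f$ (so $[\Sigma]\in\NE(f)$) and $\ph_*[\Sigma]=d[\Gamma]$ with $d>0$; crucially $E\cdot\Sigma=\Exc(\psi)\cdot f_*[\Sigma]=0$, hence
$$-K_W\cdot\Gamma=\tfrac1d\,\ph^*(-K_W)\cdot\Sigma=\tfrac1d\,(-K_X+E)\cdot\Sigma=\tfrac1d\,(-K_X\cdot\Sigma)>0,$$
the last step by $f$-ampleness of $-K_X$. Thus $-K_W$ is positive on every curve in every fiber, so it is $g$-ample. For the final assertion of $(iii)$, write $K_Y=\psi^*K_Z+a\,\Exc(\psi)$; since $Z$ is canonical the discrepancy satisfies $a\ge 0$, and as $\Exc(\psi)\cdot\NE(\psi)<0$ we obtain $-K_Y\cdot\NE(\psi)=-a\,\Exc(\psi)\cdot\NE(\psi)\ge 0$.

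For $(iv)$ and $(v)$ the fibers of $\psi$ are controlled through the $\ell$'s. As $\Exc(\psi)$ is Cartier on the factorial variety $Y$, the integer $\Exc(\psi)\cdot C'$ (with $C'=f(\ell)$) is negative, and $E\cdot\ell=-1=d\,(\Exc(\psi)\cdot C')$ forces $d=1$ and $\Exc(\psi)\cdot C'=-1$, so $f|_\ell\colon\ell\xrightarrow{\sim}C'\cong\pr^1$. For $(iv)$ a normal-bundle computation along a general fiber $C'$ gives discrepancy $a=1$, so $\psi$ is a divisorial Mori contraction of the Gorenstein canonical threefold $Y$ with the numerical invariants of a smooth-curve blow-up; the classification of such extremal contractions (again Ando's theorem, applied in the threefold situation, in the spirit of \cite{morimukai,fanoEMS}) then identifies $\psi$ with the blow-up of a smooth curve $C\subseteq Z_{reg}$. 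For $(v)$, Lemma \ref{target}$(iii)$ applied to $f$ and to $g$ shows that $Y$ and $Z$ are smooth surfaces, so $\psi$ contracts a single $(-1)$-curve, i.e.\ is the blow-up of a point $z_0$, and $F_0:=g^{-1}(z_0)=B$ is the smooth center of $\ph$; this gives $(v.a)$. Since $d=1$, each $\ell$ maps isomorphically onto $\Exc(\psi)\cong\pr^1$, which identifies $E$ with $\pr^1\times F_0$ so that $\ph|_E$ and $f|_E$ are the two projections, giving $(v.b)$. Finally, if $X$ is Fano I would verify $-K_W\cdot\Gamma>0$ for every curve $\Gamma\subseteq W$ (using the proper transform and $E\cdot\w\Gamma\ge 0$ when $\Gamma\not\subseteq B$, and an $f$-contracted section of $E\to B$, for which $E\cdot(\,\cdot\,)=0$, when $\Gamma\subseteq B$); since $\ov{\NE}(W)=\ph_*\ov{\NE}(X)$ is polyhedral (see \ref{lifting}), $-K_W$ is ample and $W$ is Fano, which is $(v.c)$.
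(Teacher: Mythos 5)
Parts $(i)$, $(ii)$, $(iii)$ and $(v)$ of your proposal are sound and essentially reproduce the paper's argument (your direct Kleiman-criterion check for $(v.c)$, in place of citing Wi\'sniewski's Proposition 3.4, and your bijective-morphism identification $E\cong\pr^1\times F_0$ are harmless variants; your case division for showing $g$ is a Mori contraction is in fact more careful than the paper's). The genuine gap is in $(iv)$. You reduce it to ``numerical invariants of a smooth-curve blow-up'' plus ``the classification of such extremal contractions (Ando's theorem, applied in the threefold situation)''. But Ando's theorem, and every classification of this kind (Mori for smooth $3$-folds, Cutkosky for Gorenstein \emph{terminal} $3$-folds), requires the source to be smooth, respectively terminal. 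Here $Y$ is only known to be factorial with isolated \emph{canonical} singularities (Lemma \ref{target}), so no such result applies; worse, whether $\Exc(\psi)$ meets $\Sing Y$ is exactly the point at issue, since the desired conclusion ``$\psi$ is the blow-up of a smooth curve $C\subset Z_{reg}$'' forces $\Exc(\psi)\cap\Sing Y=\emptyset$. Any proof of $(iv)$ must establish this disjointness, and your proposal never does.

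Moreover your ``normal-bundle computation giving discrepancy $a=1$'' is circular: to compute $-K_Y\cdot C'$ by adjunction, $-K_Y\cdot C'=-K_S\cdot C'+S\cdot C'$ with $S=\Exc(\psi)$, you need $S$ to be smooth (or at least normal with adjunction available) along the general fiber $C'$, and you need $C'$ to be a \emph{smooth} rational curve; a priori $S$ is just a prime Cartier divisor on a singular $3$-fold, possibly non-normal, and your $d=1$ argument only makes $C'$ birational to $\pr^1$, not isomorphic to it. This is precisely where the paper does its real work: it shows $S_{reg}=S\cap Y_{reg}$ (a differential argument using $E=f^*(S)$ smooth), that $S$ is Cohen-Macaulay with isolated singularities, hence normal by Serre's criterion, that $f$ is equidimensional over $S$, so that $f_{|E}\colon E\to S$ is an equidimensional Mori contraction and $S$ is smooth by Lemma \ref{target} $(ii)$, whence $S\cap\Sing Y=\emptyset$. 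Only then do the computations $-K_S\cdot l=2$, $S\cdot l=-1$, $-K_Y\cdot l=1$ make sense, after which $\psi_{|S}$ is a $\pr^1$-bundle and $\psi$ is a smooth blow-up with center in $Z_{reg}$. Without this chain, $(iv)$ does not go through. (A smaller issue of the same nature: in $(v.a)$ the smoothness of the \emph{scheme-theoretic} fiber of $g$ over $z_0$ requires the reducedness argument via $E=f^*(\Exc(\psi))$ being the reduced fiber of $\psi\circ f$, which you omit.)
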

\begin{proof}
 Let $F$ be a non trivial fiber of $\ph$. Then $f$ is finite on $F$ and
$f(F)$ is contained in a non trivial fiber of $\psi$, hence 
$f(F)\subseteq\Exc(\psi)$ and $\dim F=1$. 
Since $\Exc(\ph)$ is covered by non
trivial fibers of $\ph$, we get
$f(\Exc(\ph))\subseteq\Exc(\psi)$. In particular $\ph$ is birational
with fibers of dimension at most $1$, so
\cite[Theorem 1.2]{wisn}
yields that $W$ is smooth and $\ph$ is the blow-up of a smooth subvariety of
  codimension $2$.

Let $E\subset X$ be the exceptional divisor of $\ph$,
then $f(E)$ is contained in $\Exc(\psi)$ and it is a divisor by Lemma
\ref{nonequilocus} $(i)$. Hence $\psi$ is
elementary and divisorial, and $\Exc(\psi)$ is irreducible by Remark
\ref{divisorial},  that is 
 $\Exc(\psi)=f(E)$. Then again Lemma~\ref{nonequilocus} $(i)$ gives
 $E=f^*(\Exc(\psi))$.

We have
$$K_X= \ph^*(K_W)+ E.$$
Let $C$ be an irreducible curve contracted by $g$ and $\widetilde{C}$
an irreducible curve in $X$ such that $\ph(\widetilde{C})=C$. Then
$\ph_*(\widetilde{C})=m C$ with $m\in\Z_{>0}$. Moreover $f(\widetilde{C})$ is
a point, so $\widetilde{C}\cdot E=0$. Then:
$$m(-K_W\cdot
C)=-K_W\cdot\ph_*(\widetilde{C})=(-K_X-E)\cdot \widetilde{C}=
 -K_X\cdot \widetilde{C}>0,$$
so $g$ is a Mori contraction.

We have $\dim\ker
f_*=\rho_X-\rho_Y=\rho_W-\rho_Z=\dim\ker g_*$. 
Moreover $\NE(\ph)$ is an extremal ray of $\NE(X)$
not contained in $\NE(f)$, hence $\ker\ph_*\cap\ker f_*=\{0\}$. 
Then $\dim \ph_*(\ker
f_*)=\dim\ker g_*$, on the other hand
 $\ph_*(\ker f_*)\subseteq\ker g_*$, so equality holds. 

Observe that $\ph$ is an isomorphism over the general fibers of $f$
and $g$. Let $F_1$ be a general fiber of $f$, so that $\N(F_1,X)=\ker
f_*$ because
$f$ is quasi elementary.
 Then $\ph(F_1)$ is a general fiber of
$g$, and $\N(\ph(F_1),W)=\ph_*(\N(F_1,X))=\ph_*(\ker f_*)
=\ker g_*$. Thus $g$ is quasi
elementary. 

Then $Z$ is factorial with canonical singularities by Lemma
\ref{target} $(i)$.
Let's show that $-K_Y\cdot\NE(\psi)\geq 0$. Write
$K_Y=\psi^*(K_Z)+r\Exc(\psi)$ with  $r\in\Z$, and let $C_1\subset Y$
be a curve contracted by $\psi$. Then $\Exc(\psi)\cdot C_1<0$ by
Remark
\ref{divisorial}, and $-K_Y\cdot C_1=r(-\Exc(\psi)\cdot C_1)$. Hence we
have to show that $r\geq 0$. Let $h\colon Y'\to Y$ be a
resolution of singularities of $Y$, and consider the composition
$\psi\circ h\colon Y'\to Z$. Call $E'$ the proper transform of
$\Exc(\psi)$ in $Y'$. Then $r$ is the coefficient of $E'$ in
$K_{Y'}-(\psi\circ h)^*(K_Z)$, thus $r\geq 0$ because $Z$ has
canonical singularities.

\medskip

Assume now that $\dim Y=3$ and
set $S:=\Exc(\psi)$. 

Let's first notice that $S_{reg}=S\cap Y_{reg}$. This is because if
$y_0\in S_{reg}$, then $y_0$ must be smooth for $Y$
too, because $S$ is a Cartier divisor in $Y$. On the other hand let $y_1\in
Y_{reg}$ and let $h\in\mathcal{O}_{Y,y_1}$ be a local equation for $S$
in $y_1$. Since $E=f^*(S)$, $f^*(h)$ is a local equation for $E$
near the fiber over $y_1$. Take $x_1$ in this fiber. Since $E$ is
smooth, the differential $d_{x_1}(f^*(h))$ is non zero, 
and since $d_{x_1}(f^*(h))=d_{y_1}h\circ d_{x_1}f$, 
the
differential $d_{y_1}h$ must be non zero too. Thus $S$ is smooth at
$y_1$. 

Recall that $Y$ has rational singularities, in particular it is
Cohen-Macaulay (see \cite[Corollary~7.4]{kollarhigher}
and \cite[Theorem 5.10]{KollarMori}). Since $S$ is a Cartier divisor
in $Y$, it is Cohen-Macaulay too. On the other hand,
Lemma \ref{target} $(iv)$ implies that $Y$ and $S$ have
isolated singularities. Then $S$ is normal by Serre's criterion.

By Lemma~\ref{nonequilocus} $f$ can
have at most isolated fibers of dimension $n-2$.
Let's show that $f$ is equidimensional over $S$. 
If $F_0\subset E$ is an irreducible component of a fiber of
$f$ with $\dim F_0=n-2$, then $\ph(F_0)\subseteq \ph(E)$ and
$\dim\ph(F_0)=n-2=\dim\ph(E)$, hence $\ph(F_0)=\ph(E)$. This gives
$$\psi(S)=
\psi(f(E))=g(\ph(E))=g(\ph(F_0))=\psi(f(F_0))=pt,$$
a contradiction.

Now $E$ is smooth, $S$ is normal, and  $f_{|E}\colon 
E\to S$ has connected fibers.
As in the proof of Lemma~\ref{target} $(iv)$ we
see that
$f_{|E}$ is a Mori
contraction, and by Lemma \ref{target} $(ii)$ the surface $S$ is
smooth, so that 
 $S\cap\Sing(Y)=\emptyset$.

The general fiber $l$ of  $\psi_{|S}\colon S\to \psi(S)$ is a rational
curve, and it is smooth because $S$ is, hence $-K_S\cdot l=2$.

Let $\tilde{l}$ be a fiber of $\ph$ such that $f(\tilde{l})=l$. Then
$$-1=\tilde{l}\cdot E=\tilde{l}\cdot f^*(S)=f_*(\tilde{l})\cdot S,$$
which gives $l\cdot S=-1$ and $-K_Y\cdot l=-K_S\cdot l+S\cdot l=1$. 

This shows also that $\psi_{|S}\colon S\to\psi(S)$ is a $\pr^1$-bundle. Therefore
$\psi$ is
the blow-up of a smooth curve contained in the smooth locus of
$Z$, and $Z$ has the same singularities as $Y$.

\medskip

Finally, let's suppose that $\dim Y=2$.
Then $Y$ and $Z$ are smooth by Lemma~\ref{target} $(iii)$, and $f$ and
$g$ are equidimensional. Hence $\psi$ is the blow-up of a point
$z_0\in Z$.
We have 
$$E=f^{-1}(\Exc(\psi))=\ph^{-1}(g^{-1}(z_0)),$$
so the center of $\ph$ is $\ph(E)=g^{-1}(z_0)=F_0$. 

Since $E=f^*(\Exc(\psi))$, $E$ is the (schematic)
fiber  of  $\psi\circ f$ over $z_0$, and it is reduced. Thus
$g\circ\ph$ has reduced fiber over $z_0$, and the same must hold for
$g$.
Now $g$ is an equidimensional morphism between smooth varieties, whose
fiber $F_0$ over $z_0$ is reduced and smooth. This implies that $g$ is
smooth over $z_0$, and hence the normal bundle of $F_0$ in $W$ is
trivial. Since $E$ is the exceptional divisor of the blow-up of $F_0$,
we deduce that
$E\cong
\pr^1\times F_0$, and $f_{|E}$ and
$\ph_{|E}$ are the two projections.

\medskip

Suppose that $X$ is Fano and $W$ is not.
Then \cite[Proposition 3.4]{wisn} 
says that there exists an extremal ray
$\alpha$ of $\NE(X)$, different from $\NE(\ph)$, 
such that ${\alpha}\cdot
E<0$. This implies that $\alpha$ is not contained in $\NE(f)$,
because $C\cdot E=0$ for  every curve $C$ contracted by $f$.
However 
$E\cong 
\pr^1\times F_0$, so every curve in $E$ is numerically equivalent
to a linear combination with coefficients in $\Q_{\,\geq 0}$ of a curve
in $\NE(f)$ and a curve in $\NE(\ph)$. Thus no curve in $E$ can have
numerical class in an extremal ray $\alpha$ not contained in
$\NE(f)\cup\NE(\ph)$, and we have a contradiction.
\end{proof}
\end{parg}
\begin{parg}\label{div}
{\bf Divisors ${D}$ with small ${\dim\N(D,X)}$.} 
Let $X$ be a smooth Fano variety of dimension $n$, 
and $D$ a prime divisor in $X$. 
If $D$ is simple, e.g.\ if $\rho_D$ is very small, then
one can hope to deduce
informations on $X$ itself.
For instance
in \cite{bonwisncamp} the authors classify the 
possible pairs $(X,D)$ when
$D\cong \pr^{n-1}$ and
$\mathcal{N}_{D/X}\cong\mathcal{O}_{\pr^{n-1}}(-1)$. This is
equivalent to asking that $X$ is obtained by blowing-up a
smooth variety in a point. 

This classification has been generalized in \cite{toru} to the case 
$D\cong \pr^{n-1}$ and
$\mathcal{N}_{D/X}\cong\mathcal{O}_{\pr^{n-1}}(-a)$ with $a\in\Z_{\geq
  1}$. 

With the same techniques \cite[Proposition 5]{toru} shows that if $X$
contains a 
prime divisor $D$ with $\rho_D=1$ and $\dim X\geq 3$, then $\rho_X\leq 3$.
The proof of
\cite[Proposition 5]{toru} can be generalized in order to obtain the
following. 
\begin{proposition}
\label{divisor}
Let $X$ be a smooth Fano variety, and let $f\colon X\to Y$ be
either a quasi elementary
 contraction of fiber type or an isomorphism.

Suppose that $\dim Y\geq 3$ and that 
there exists a prime divisor $D\subset Y$ such that
$\dim\N(D,Y)=1$. Then $\rho_Y\leq 3$.
\end{proposition}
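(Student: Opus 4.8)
The plan is to read the bound directly off the cone $\NE(Y)$, exploiting the rigidity forced by $\dim\N(D,Y)=1$. By Lemma~\ref{target}$(i)$ the variety $Y$ is factorial with canonical singularities, so $D$ is Cartier; by Lemma~\ref{easy} the cone $\NE(Y)$ is closed and polyhedral and each of its extremal rays contains the class of a rational curve. Let $R\subset\N(Y)$ be the ray spanned by the classes of the curves contained in $D$; the hypothesis says precisely that this is a single ray. I would first record two facts. Any curve $C$ with $D\cdot C<0$ lies in $D$, hence has class in $R$; thus $D$ is negative on at most the ray $R$, and every other extremal ray $\alpha$ of $\NE(Y)$ satisfies $D\cdot\alpha\geq 0$. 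In particular $D$ is nef exactly when $D\cdot R\geq 0$. Moreover the restriction $N^1(Y)\to N^1(D)$ is adjoint to $i_*\colon\N(D)\to\N(Y)$, so it too has rank one: every divisor of $Y$ restricts to $D$ as a real multiple of a fixed ample class. This rank-one restriction is the main source of rigidity and will be used to control the behaviour of $D$ under the contractions appearing below.

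Suppose first $D\cdot R<0$. Then $R$ is the unique extremal ray on which $D$ is negative, $\Lo(R)\subseteq D$, and the associated contraction $\psi\colon Y\to Z$ of Lemma~\ref{easy} is birational (otherwise $\Lo(R)=Y$). Since $X$ is Fano, $\psi$ has an elementary lifting $\ph\colon X\to W$ as in \ref{lifting}. The plan here is to verify that the fibres of $\psi$ have dimension at most one — this is exactly where $\dim\N(D,Y)=1$ enters, as it forces the one-dimensional locus $R$ to account for the whole contracted fibre — and then to invoke Theorem~\ref{secondo}. That theorem gives $\Exc(\psi)=D$, exhibits $Z$ as a factorial variety with canonical singularities that is again the target of a quasi elementary contraction $g\colon W\to Z$, and (for $\dim Y=3$) identifies $\psi$ with the blow-up of a smooth curve in $Z_{reg}$. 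Pinning down the geometry of $D$ as the exceptional locus of a single blow-up is what allows one to account for the ray $R$, so that the entire contribution of $R$ to $\rho_Y$ is the $+1$ from $\rho_Y=\rho_Z+1$.

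The heart of the argument is therefore the nef case $D\cdot R\geq 0$, to which the previous step is designed to reduce. Here I would consider the contraction of the face $\NE(Y)\cap D^{\perp}$ and split according to whether $R$ lies in that face, i.e.\ whether $D\cdot R=0$ or $D\cdot R>0$: in the first case the curves in $D$ are contracted and $D$ maps to lower dimension, in the second $D$ maps finitely. Using the rank-one restriction — so that every divisor numerically trivial on the curves of $D$ descends along the contraction — together with Lemma~\ref{nonequilocus} and the liftings to $X$ of the remaining extremal rays, the goal is to show that the $D$-nonnegative part of $\NE(Y)$ contributes at most two independent numerical directions, whence $\rho_Y\leq 3$. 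The main obstacle I anticipate is precisely this bookkeeping in the nef case: one must rule out further extremal rays $\alpha$ with $D\cdot\alpha\geq 0$ by analysing the fibre dimensions of the induced contractions of $Y$ (so as to remain within the hypotheses of Theorem~\ref{secondo}) and the position of $\Lo(\alpha)$ relative to $D$. The difficulty is sharpened by the fact that, unlike in the original statement for $X$ itself, the pullback $f^{*}(D)\subset X$ no longer has one-dimensional $\N(\,\cdot\,,X)$, so the problem cannot simply be transported back to the Fano variety $X$ and must be handled intrinsically on $Y$.
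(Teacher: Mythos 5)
Your proposal has two genuine gaps, one of which is a concretely false step. In the case $D\cdot R<0$ you claim that $\dim\N(D,Y)=1$ forces the fibres of the contraction $\psi$ of $R$ to have dimension at most one, so that Theorem~\ref{secondo} applies. This is exactly backwards: since every curve contained in $D$ has class in $R=\NE(\psi)$, the morphism $\psi$ contracts \emph{all} of $D$ to a point, so $\psi$ has a fibre of dimension $\dim Y-1\geq 2$. (Take $Y$ the blow-up of $\pr^3$ at a point and $D$ the exceptional $\pr^2$: here $\dim\N(D,Y)=1$, $D\cdot R<0$, and the contraction of $R$ is of type $(2,0)$.) So Theorem~\ref{secondo}, whose hypothesis is precisely that the fibres have dimension at most $1$, cannot be invoked, and $\psi$ need not be a blow-up of a smooth curve. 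Moreover, even granting the contraction $\psi\colon Y\to Z$, your reduction fails structurally: you obtain $\rho_Y=\rho_Z+1$ with no bound on $\rho_Z$, and since $D=\Exc(\psi)$ has been collapsed there is no divisor left in $Z$ with one-dimensional $\N(\,\cdot\,,Z)$, so this case does not feed back into the nef case as you intend.

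The second gap is that the case you yourself identify as ``the heart of the argument'' is never carried out: showing that the $D$-nonnegative part of $\NE(Y)$ contributes at most two independent directions is stated as a goal, with the obstacle acknowledged but not overcome. This is where all the content of the paper's proof lies, and its mechanism is different from your dichotomy on the sign of $D\cdot R$: one never contracts $R$ at all. Instead one picks an extremal ray $\alpha_1$ with $D\cdot\alpha_1>0$ (which always exists since $D$ is effective), observes that $\alpha_1\not\subset\N(D,Y)$ (otherwise $\psi_1(D)$ would be a point, forcing $\psi_1$ birational with $D\subseteq\Exc(\psi_1)$ and hence $D\cdot\alpha_1<0$ by Remark~\ref{divisorial}), and deduces from factoriality that every non-trivial fibre of $\psi_1$ meets $D$ in dimension $0$ and so is one-dimensional. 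If $\psi_1$ is of fiber type, $\rho_Y\leq 2$ because $\N(D,Y)$ surjects onto $\N(Z_1)$; if $\psi_1$ is birational, Theorem~\ref{secondo} applies via a lifting to the Fano manifold $X$ (this is where the hypothesis on $X$ and quasi elementarity enter), and one replaces $(Y,D)$ by $(Z_1,D_2:=\psi_1(D))$, which again satisfies $\dim\N(D_2,Z_1)=1$. Repeating once, a fiber-type ray gives $\rho_Y\leq 3$, and a second birational ray is ruled out by a contradiction: using Remark~\ref{wellknown} one shows $\Exc(\psi_2)\cap\psi_1(\Exc(\psi_1))=\emptyset$, which is incompatible with $\psi_1(\Exc(\psi_1))\subset D_2$, $\dim\N(D_2,Z_1)=1$, and the fact that $D_2$ and $\Exc(\psi_2)$ are distinct prime divisors meeting each other. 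None of this two-step descent, nor the contradiction that terminates it, is present in your sketch, so the bound $\rho_Y\leq 3$ is not established.
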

\noindent In the case where $f$ is an isomorphism, this says that if
$\dim X\geq 3$ and
$X$  contains a prime divisor~$D$ with $\dim\N(D,X)=1$, then $\rho_X\leq 3$.
In particular if $\dim X=n\geq 3$ and
$X$ has an elementary contraction of type $(n-1,0)$,
then $\rho_X\leq 3$.
\begin{proof}
Recall that $Y$ is factorial by Lemma \ref{target} $(i)$. Moreover
$\NE(Y)$ is closed and polyhedral, and every extremal ray can be
contracted, by Lemma \ref{easy}.

Let's assume that $\rho_Y>1$.
There exists at least one extremal ray $\alpha_1$ of $\NE(Y)$ such that
$\alpha_1\cdot D>0$, let $\psi_1\colon Y\to Z_1$ be its contraction. Then
$\dim Z_1>0$ and 
$D$ must intersect every non trivial fiber of~$\psi_1$.

If $\alpha_1\subset\N(D,Y)$, then every curve in $D$ has numerical class
contained in $\alpha_1$, hence $\psi_1(D)$ is a point. Then $\psi_1$
can not be of fiber type (otherwise $Z_1$ is a point), so $\psi_1$ is
birational and $D\subseteq\Exc(\psi_1)$. This implies
$D\cdot\alpha_1<0$ by Remark \ref{divisorial}, a contradiction.  

Hence $\alpha_1\not\subset\N(D,Y)$. Consider a non
trivial fiber $F$ of $\psi_1$. Then $F\cap D\neq\emptyset$, on the other
hand $\dim(F\cap D)=0$ otherwise we would get a curve in $D$ with
numerical class in $\alpha_1$. Since $Y$ is factorial, we get
$$0=\dim(F\cap D)\geq\dim F-1,$$ 
so $F$ has dimension one. 

If $\psi_1$ is of fiber type, then
$$(\psi_1)_{*|\N(D,Y)}\colon\N(D,Y)\la \N(Z_1)$$
is surjective, so $\rho_{Z_1}= 1$ and $\rho_Y= 2$.

Assume that $\psi_1$ is birational
and consider a lifting of $\psi_1$ as in \ref{lifting}:
$$\xymatrix{
X
\ar[dr]^h
\ar[r]^{\ph_1}\ar[d]_f & {W_1}\ar[d]^g \\
Y\ar[r]_{\psi_1} & {Z_1}
}$$
Since $X$ is Fano, $\ph_1$ is a Mori contraction, and Theorem
\ref{secondo} applies. Thus $g$ is a quasi elementary Mori
contraction,  
 $W_1$ is smooth, $\ph_1$ is a blow-up,
$\psi_1$ is divisorial,
$Z_1$ is factorial, and  $\Exc(\ph_1)=f^{-1}(\Exc(\psi_1))$.

Since $\psi_1$ is finite on $D$,
$D_2:=\psi_1(D)$ is a Cartier
divisor in $Z_1$. Moreover
$$(\psi_1)_{*|\N(D,Y)}\colon\N(D,Y) \la\N(D_2,Z_1)$$
is surjective, hence $\dim\N(D_2,Z_1)=1$. 
Recall also that $D$ intersects every non trivial fiber
of $\psi_1$, hence $D_2\supset\psi_1(\Exc(\psi_1))$. 

Again by Lemma \ref{easy}, $\NE(Z_1)$ is closed and polyhedral, and
every extremal ray can be contracted.

Thus consider 
an extremal ray $\alpha_2$ of $\NE(Z_1)$ such that $\alpha_2\cdot
D_2>0$, and let $\psi_2\colon Z_1\to Z_2$ be the associated contraction.

As before, we see that if $\psi_2$ is of fiber type then $\rho_{Z_1}\leq 2$ and
 $\rho_Y\leq 3$.

If $\psi_2$ is birational, then $\alpha_2\not\subset\N(D_2,Z_1)$ and 
again every non
trivial fiber of $\psi_2$ has dimension one. 
We show that this case leads to a contradiction.

Let $\widehat{\alpha}_2$ be the unique extremal face of $\NE(X)$ such that
$\widehat{\alpha}_2$ contains $\NE(h)$ and
$h_*(\widehat{\alpha}_2)=\alpha_2$ (see \ref{lifting}). 
Notice that $\NE(\ph_1)$ 
is an extremal ray of  $\NE(h)$.

Let's make an easy remark on convex
polyhedral cones. If $\sigma\subset\R^d$ is a 
convex
polyhedral cone, $\gamma$ is a proper face of $\sigma$, and $\rho$ is
an extremal ray of $\gamma$, then there exists an extremal ray $\rho_2$
of $\sigma$, not contained in $\gamma$, and such that $\rho+\rho_2$ is a
face of $\sigma$. The reader who 
is not familiar with convex geometry may easily
prove this by induction on $\dim\sigma$.

Applied to our situation, this says that there is an extremal ray
 $\widetilde{\beta}$ of $\widehat{\alpha}_2$ such that
$\widetilde{\beta}\not\subset\NE(h)$ and $\widetilde{\beta}+\NE(\ph_1)$
 is a face of $\widehat{\alpha}_2$. 

Then $\beta=(\ph_1)_*(\widetilde{\beta})$ is an extremal ray of
$\NE(W_1)$, whose 
contraction $\ph_2\colon W_1\to W_2$ yields a commutative
diagram:
$$\xymatrix{
X\ar[dr]^h\ar[r]^{\ph_1}\ar[d]_f & {W_1}\ar[d]^g\ar[r]^{\ph_2}& {W_2}\ar[d] \\
Y\ar[r]_{\psi_1} & {Z_1}\ar[r]_{\psi_2} &{Z_2}
}$$
Observe that $\NE(\psi_2\circ h)=\widehat{\alpha}_2$ and
$\NE(\ph_2\circ\ph_1)=\widetilde{\beta}+\NE(\ph_1)\subseteq
\NE(\psi_2\circ h)$, so the morphism $W_2\to Z_2$ exists by rigidity,
see \ref{lifting}.
Since $\psi_2$ is birational and has fibers of
dimension at most one, it is easy to see that the same holds for
$\ph_2$.

Suppose that there exists a non trivial fiber $C$ of $\ph_2$ contained in
$\ph_1(\Exc(\ph_1))$.
Let $\widetilde{C}$ be an irreducible curve in $\Exc(\ph_1)$ such that
$\ph_1(\widetilde{C})=C$. Then $f(\widetilde{C})\subset
f(\Exc(\ph_1))= \Exc(\psi_1)$ and $h(\widetilde{C})\subseteq
\psi_1(\Exc(\psi_1))\subset D_2$. 

On the other hand, since $\ph_2(C)$ is a point,
$\psi_2(h(\widetilde{C}))$ must also be a point, namely 
the numerical class of $h(\widetilde{C})$ must be in
$\alpha_2=\NE(\psi_2)$. 
But this contradicts $\alpha_2\not\subset\N(D_2,Z_1)$. 

Therefore if $C$ is a non trivial fiber of $\ph_2$, we have
$C\not\subseteq\ph_1(\Exc(\ph_1))$,
and this implies that 
$-K_{W_1}\cdot C>0$. 

Thus
 $\ph_2$ is a Mori contraction, and Theorem  \ref{secondo} yields that
$W_2$ is smooth, $\ph_2$ is the blow-up of a smooth subvariety of
codimension $2$ in $W_2$, and $\Exc(\ph_2)=g^{-1}(\Exc(\psi_2))$. 
In particular $-K_{W_1}\cdot C=1$.

Now using Remark \ref{wellknown} we see that any non trivial fiber $C$ 
of $\ph_2$ can not intersect 
$\ph_1(\Exc(\ph_1))$, hence:
$$
\Exc(\ph_2)\cap \ph_1(\Exc(\ph_1))=\emptyset.
$$
 Recall that $\Exc(\ph_1)=f^{-1}(\Exc(\psi_1))$, which gives
$\ph_1(\Exc(\ph_1))=g^{-1}(\psi_1(\Exc(\psi_1)))$. Therefore
$$\emptyset=g^{-1}\bigl(\Exc(\psi_2)\bigr)
\cap g^{-1}\bigl(\psi_1(\Exc(\psi_1))\bigr)=g^{-1}\bigl(\Exc(\psi_2)
\cap \psi_1(\Exc(\psi_1))\bigr),$$
namely
 $\Exc(\psi_2)\cap \psi_1(\Exc(\psi_1))=\emptyset$.

Let's notice that $\dim\psi_1(\Exc(\psi_1))=\dim Y-2\geq 1$, so there
exists a curve $C_2\subseteq \psi_1(\Exc(\psi_1))$, and $C_2\cdot
\Exc(\psi_2)=0$. 

On the other hand $\psi_1(\Exc(\psi_1))\subset D_2$ and
$\dim\N(D_2,Z_1)=1$. This implies that $C_2'\cdot \Exc(\psi_2)=0$ for
every curve $C_2'\subset D_2$. 
But this is impossible, because $D_2$ and $\Exc(\psi_2)$ are distinct
prime divisors with non empty intersection.
\end{proof}
\begin{remark}
\label{dettaglio}
More precisely we have shown that in the hypotheses of 
Proposition~\ref{divisor}, either $\rho_Y=1$, or one of the following occurs:
\begin{enumerate}[$\bullet$]
\item $\rho_Y=2$ and $Y$ has an elementary contraction of fiber type with
$1$-dimensional fibers;
\item $\rho_Y=3$ and $Y$ has a divisorial elementary contraction such that
every fiber has dimension at most~$1$.
\end{enumerate}
\end{remark}
\end{parg}
\section{Existence of contractions after \cite{unsplit}}
\label{quasiunsplit}
Let $X$ be any normal projective variety. 
Consider an irreducible closed subset $V$ of $\Chow(X)$ such that:
\begin{enumerate}[$\bullet$]
\item
for $v\in V$ general, the corresponding cycle $C_v\subset X$ is an
irreducible, reduced, and connected rational curve;
\item
every point of $X$ is contained in $C_v$ for some $v\in V$.
\end{enumerate}
We call $V$ a \emph{covering family of rational
curves} in $X$. Such family induces an
equivalence relation on $X$ (as a set), called $V$-equivalence, as
follows. Two points $x,y\in X$ are $V$-equivalent if
there exist $v_1,\dotsc,v_m\in V$ such that $C_{v_1}\cup\cdots\cup
C_{v_m}$ is connected and contains $x$ and $y$. This notion was 
originally
introduced by Campana \cite{campana81}
and is by now well known, see \cite[\S IV.4]{kollar},
\cite[\S 5]{debarreUT}, \cite{campa}. We refer
specifically to \cite{unsplit} for the set up and for precise
references.

In particular it is known that there exist: an open subset $X_0$ which is
closed under $V$-equivalence, a normal quasi-projective variety $Y_0$,
and a proper, equidimensional morphism $q\colon X_0\to Y_0$, such that every
fiber of $q$ is a $V$-equivalence class. In general, there are no
morphisms defined on the whole $X$ which extend $q$.

If $f\colon X\to Y$ is a Mori contraction of fiber type, then one can
find a family $V$ as above such that $q=f_{|X_0}$ (see the proof of
Theorem \ref{face}). Using the properties of this family, we can apply
the results of \cite{unsplit} to deduce the following.
\begin{thm}
\label{face}
Let $X$ be a smooth variety of dimension $n$ and consider two Mori
contractions
$$\xymatrix{X\ar[d]_f\ar[r]^{\ph} & W\\
Y & }$$
where $f$ is elementary of fiber type and 
$\NE(f)\cap \NE({\ph})=\{0\}$. 
Let $k_f$ and $k_{\ph}$ be the
dimensions of the general fibers of $f$ and ${\ph}_{|\Exc(\ph)}$ 
respectively.

Assume that we are in one of the
following situations:
\begin{enumerate}[$(i)$]
\item ${\ph}$ is quasi elementary
of fiber type, 
and $k_f+k_{\ph}\geq n-3$;
\item ${\ph}$ is
  elementary and divisorial, and $\dim Y\leq 3$;
\item ${\ph}$ is
  elementary and divisorial, $f(\Exc(\ph))=Y$,
and $k_{\ph}\geq\dim Y -3$.
\end{enumerate}
Then  there exists a
commutative diagram:
\stepcounter{thm}
\begin{equation}\label{diag}
\xymatrix{
X\ar[r]^{\ph}\ar[d]_f\ar[dr]^{h} & W\ar[d]^g \\
Y \ar[r]& Z
}\end{equation}
where $g\colon W\to Z$ is an elementary Mori contraction and
$\dim Z\leq 3$.
\end{thm}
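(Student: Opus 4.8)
The plan is to build the required map as a contraction of $X$ and then factor it through $\ph$. Concretely, I will produce a Mori contraction $h\colon X\to Z$ whose extremal face $\NE(h)$ is the cone spanned by $\NE(f)$ and $\NE(\ph)$. Since $\ker\ph_*$ is spanned by $\NE(\ph)$ and $\NE(f)\cap\NE(\ph)=\{0\}$, the ray $\NE(f)$ meets $\ker\ph_*$ only at the origin; hence this cone has dimension $\dim\NE(\ph)+1$ and is not contained in $\ker\ph_*$. Once $h$ is available, it contracts $\NE(\ph)$, so by rigidity (as in \ref{lifting}) it factors as $h=g\circ\ph$, and it contracts $\NE(f)$, so it also factors as $h=\psi\circ f$; this is exactly the commutative diagram \eqref{diag}. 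The count $\rho_X-\rho_Z=\dim\NE(h)=\dim\NE(\ph)+1=(\rho_X-\rho_W)+1$ gives $\rho_W-\rho_Z=1$, so $g$ is elementary, and $g$ is Mori because $h$ and $\ph$ are; indeed $\NE(h)$, being spanned by the two $K_X$-negative rays $\NE(f)$ and $\NE(\ph)$, is $K_X$-negative, so $h$ is a Mori contraction. Thus everything reduces to the \emph{existence} of $h$.

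The existence is where \cite{unsplit} is used. Because $f$ is elementary of fiber type, I fix a minimal dominating family $V$ of rational curves with class generating $\NE(f)$; by minimality $V$ is unsplit, and its $V$-equivalence classes are the fibers of $f$ over $X_0$, so $q=f_{|X_0}$. I then consider the equivalence relation on $X$ generated by $V$ together with the fibers of $\ph$. Its general class is the fiber of the rationally connected quotient $h_0\colon X_0\to Z_0$, and the numerical classes involved stay in the cone spanned by $\NE(f)$ and $\NE(\ph)$ (for $V$ this is clear since no irreducible $C_v$ is contracted by $\ph$, and for $\ph$ the classes lie in $\NE(\ph)$), so the situation is quasi elementary in the sense required. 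The results of \cite{unsplit} then guarantee, provided the base $Z_0$ has dimension at most $3$, that this quotient extends from $X_0$ to a genuine contraction $h\colon X\to Z$ defined on all of $X$, contracting precisely the face spanned by $\NE(f)$ and $\NE(\ph)$. The three hypotheses $(i)$, $(ii)$, $(iii)$ correspond to the distinct configurations of families (two covering families in $(i)$; one covering family together with a divisorial contraction in $(ii)$ and $(iii)$) for which \cite{unsplit} supplies this extension.

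It remains to verify $\dim Z\leq 3$ in each case, which is exactly where the numerical hypotheses enter; all bounds are checked on a general fiber $G$ of $h$ (equivalently on a general equivalence class), using $\dim Z=n-\dim G$. In case $(i)$, $G$ contains, for a general point, its $f$-fiber $F$ (of dimension $k_f$, on which $\ph$ is finite since $\NE(f)\cap\NE(\ph)=\{0\}$) together with all $\ph$-fibers meeting $\ph^{-1}(\ph(F))$, so $\dim G\geq k_f+k_{\ph}\geq n-3$. In case $(ii)$, $h$ factors through $f$, so $G\supseteq F$ and $\dim Z\leq\dim Y\leq 3$ with no further work. In case $(iii)$, $\psi\colon Y\to Z$ contracts the $f$-image of a general fiber of $\ph_{|\Exc(\ph)}$; since $\NE(f)\cap\NE(\ph)=\{0\}$ forces $f$ to be finite on such a fiber, that image has dimension $k_{\ph}$, so every fiber of $\psi$ has dimension at least $k_{\ph}$ and $\dim Z\leq\dim Y-k_{\ph}\leq 3$ by the hypothesis $k_{\ph}\geq\dim Y-3$ (equivalently $k_f+k_{\ph}\geq n-3$).

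The main obstacle is the global existence step: passing from the quotient defined only on $X_0$ to a morphism on all of $X$, when $X$ is not assumed Fano, so that $\NE(X)$ need not be polyhedral and a priori faces need not be contractible. This is precisely the content imported from \cite{unsplit}, and the role of the bound $\dim Z\leq 3$ is exactly to make that extension available. The remaining points — that the combined family is quasi elementary (from the unsplitness of $V$ and the fact that $\ph_*[C_v]\neq 0$) and that the resulting $g$ is a Mori contraction — are comparatively routine once the existence is in hand.
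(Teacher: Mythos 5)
Your reduction of the theorem to the existence of a single contraction $h$ contracting both $\NE(f)$ and $\NE(\ph)$ is reasonable, and your dimension counts in cases $(i)$--$(iii)$ match the ones needed; but the existence step, which is the entire content of the theorem, is not proved. You invoke \cite{unsplit} for ``the equivalence relation on $X$ generated by $V$ together with the fibers of $\ph$'', but the results of \cite{unsplit} concern quotients by a (quasi-unsplit) \emph{covering family of rational curves}. In cases $(ii)$ and $(iii)$ the contraction $\ph$ is divisorial: the curves it contracts do not cover $X$, its fibers are not curves of any covering family, and no theorem in \cite{unsplit} produces a quotient of such a mixed relation --- much less a contraction whose face is ``precisely the cone spanned by $\NE(f)$ and $\NE(\ph)$''. (A cone spanned by two extremal faces need not be a face; since $X$ is not assumed Fano there is no Contraction Theorem to contract faces anyway; and \ref{lifting} explicitly warns that $\widetilde{\alpha}+\NE(f)$ can be strictly smaller than the corresponding face $\widehat{\alpha}$. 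What one can actually expect is $\ker h_*=\ker f_*+\ker\ph_*$, not an identification of $\NE(h)$ with the spanned cone.) Note also that on $X$ the $V$-equivalence classes have codimension $n-k_f=\dim Y$, which in cases $(i)$ and $(iii)$ may well exceed $3$, so even for $V$ alone the codimension hypothesis of \cite[Theorem 2]{unsplit} can fail on $X$.

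The key idea you are missing is that the paper does not construct $h$ on $X$ at all: it transfers the family to $W$ and constructs $g$ there. Composing the universal family of $V$ with $\ph$ (after normalization and Stein factorization) gives a covering family $V'$ on $W$; since $f$ is \emph{elementary}, $H_{V'}=\ph_*(H_V)$ is one-dimensional, so $V'$ is quasi-unsplit --- this is where elementarity of $f$ is used, not any unsplitness of $V$ (your assertion ``by minimality $V$ is unsplit'' is false in general; what holds, and suffices, is quasi-unsplitness, because $\NE(f)$ is an extremal ray). The numerical hypotheses $(i)$--$(iii)$ enter exactly to show that the general $V'$-equivalence class $T\subset W$ has codimension at most $3$ \emph{in $W$}: $T$ contains $\ph(F)$, of dimension $k_f$ since $\ph$ is finite on $F$, and one gains the extra $k_\ph$ either because $\dim W=n-k_\ph$ ($\ph$ of fiber type) or because $T$ contains the $\ph$-image of a full $f$-saturation of a general fiber of $\ph_{|\Exc(\ph)}$ (when $f(\Exc(\ph))=Y$). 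Then \cite[Theorem 2]{unsplit}, applied on the $\Q$-factorial variety $W$ (which has canonical singularities), directly yields the elementary Mori contraction $g\colon W\to Z$ with fibers the $V'$-classes, hence $\dim Z\leq 3$, and the map $Y\to Z$ follows by rigidity. Working on $W$ is what makes the divisorial cases tractable: contracting $\ph$ first absorbs the non-covering part of your relation. A further, smaller, gap: your claim that the $V$-quotient is $f_{|X_0}$ is not automatic for a minimal family (if the general fiber of $f$ is, say, a quadric surface, a single ruling can form a minimal dominating family whose equivalence classes are strictly smaller than the $f$-fibers); the paper secures this by choosing $V$ through a very free curve of a general fiber and a normal-bundle semicontinuity argument.
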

\noindent Let us point out that we do not know whether the hypotheses
on $k_{f}+k_{\ph}$, $\dim Y$, and $k_{\ph}$ respectively are really
necessary for the statement to hold.
\begin{proof}
In the first part of the proof we will just assume that $f$ is quasi
elementary and that $\NE(f)\cap \NE({\ph})=\{0\}$.

We first construct a suitable covering family $V$
of rational curves in $X$,
such that $f$ is the quotient for
$V$-equivalence over an open subset of $X$.

Let $F$ be a general fiber of $f$, then $F$ is a smooth Fano variety of
dimension $k_f$. In particular $F$ is rationally connected, and there
exists a smooth rational curve $C_0\subset F$ which is very free,
namely with ample normal bundle in $F$:
$$\mathcal{N}_{C_0/F}\cong
\bigoplus_i\mathcal{O}_{\pr^1}(a_i),\quad
a_i\in\Z_{>0}\text{ for every }i$$
(see \cite[\S 4.3 and 5.6]{debarreUT}).
Since $\mathcal{N}_{F/X}$ is trivial, we have
$$
\mathcal{N}_{C_0/X}\cong\mathcal{O}_{\pr^1}^{\oplus(n-k_f)}
\oplus\bigoplus_{i}\mathcal{O}_{\pr^1}(a_i),$$
hence $C_0$ is a free curve in $X$. This means that the deformations
of $C_0$ cover the whole $X$:
by \cite[Proposition
4.8]{debarreUT} there exists a covering family $V$ of rational curves
in $X$ such that $C_0=C_{v_0}$ for some $v_0\in V$.

Clearly all curves parametrized by $V$ are numerically equivalent in
$X$, and their  numerical class is contained
in $\NE(f)$, because $f(C_0)$ is a point.
 Since $\NE(f)$ is a face of $\NE(X)$, we  deduce that 
every irreducible component of 
every curve
parametrized by $V$ is contained in some fiber of $f$. 
This implies that
every
$V$-equivalence class is contained in some fiber of $f$. 

Consider the quotient for $V$-equivalence $q\colon X_0\to Y_0$, let
$F_0$ be a general fiber, and let $C_v\subset F_0$ general.
Then
$C_v\cong\pr^1$ and we have: 
$$\mathcal{N}_{C_v/X}\cong \mathcal{O}_{\pr^1}^{\oplus(n-\dim F_0)}
\oplus\mathcal{N}_{C_v/F_0}.$$
On the other hand,
by \cite[II.3.9.2]{kollar} the number of trivial summands in 
$\mathcal{N}_{C_v/X}$ is at most the number of trivial summands in
$\mathcal{N}_{C_0/X}$, so that $n-\dim F_0\leq n-k_f$ and $\dim F_0\geq
k_f$.

This
implies that $\dim F_0=k_f$ and hence
all fibers of $f$ of dimension
$k_f$ are $V$-equivalence classes.

\medskip

We observe that $V$ induces in a natural way a covering family of
rational curves on $W$. 
First of all, the hypothesis $\NE(f)\cap\NE({\ph})=\{0\}$ implies that ${\ph}$
does not contract any irreducible component of any curve of $V$.

Consider now the incidence
diagram associated to $V$:
$$\xymatrix{\mathcal{C}\ar[d]\ar[r] &X \\
V &}$$
We proceed as in \cite[proof of Lemma 2]{unsplit}.
 Let $\widetilde{\mathcal{C}}$ be the normalization of
$\mathcal{C}$ and $\widetilde{\mathcal{C}}\to 
\widetilde{V}$ be the Stein factorization of
the composite map $\widetilde{\mathcal{C}}\to\mathcal{C}\to V$. Then
$\widetilde{V}$  is
normal, the general fiber of
$\widetilde{\mathcal{C}}\to\widetilde{V}$  is $\pr^1$, and the
composite map
$$\xymatrix{{\widetilde{\mathcal{C}}}
\ar[r]&{\mathcal{C}}\ar[r]&X\ar[r]^{\ph} & W }$$
yields a family of $1$-cycles in $W$, thus a morphism
$\widetilde{V}\to\Chow(W)$.
We call $V'$ the image of this morphism.
By the construction, 
for every irreducible component $C$
of a curve parametrized by $V$, the image $\ph(C)$ is a component
of some
curve parametrized by $V'$, and conversely.
In particular, if $x,y\in X$ are $V$-equivalent, then ${\ph}(x),{\ph}(y)\in W$
are $V'$-equivalent. 
\begin{claimstar}
\label{dimension}
Let $T\subseteq W$ be a
general $V'$-equivalence class. 
Then $\codim T\leq n-k_f$, and
$\codim T\leq n-(k_f+k_{\ph})$ if $f(\Exc(\ph))=Y$.
\end{claimstar}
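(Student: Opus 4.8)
The plan is to bound $\dim T$ from below, using throughout that the hypothesis $\NE(f)\cap\NE(\ph)=\{0\}$ forces $\ph$ to be finite on every fibre of $f$ and $f$ to be finite on every fibre of $\ph$: a curve contracted by both would have class in $\NE(f)\cap\NE(\ph)=\{0\}$, which is absurd.

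First I would prove $\codim T\leq n-k_f$. Let $F$ be a general fibre of $f$; it is a $V$-equivalence class, so $\ph(F)$ lies in a single $V'$-class, namely $T$. Since $\ph_{|F}$ is finite, $\dim\ph(F)=\dim F=k_f$, whence $\dim T\geq k_f$. As $\dim W\leq n$, this gives $\codim T=\dim W-\dim T\leq n-k_f$.

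Now suppose $f(\Exc(\ph))=Y$; I must show $\codim T\leq n-(k_f+k_{\ph})$. If $\ph$ is of fibre type then $\Exc(\ph)=X$ and $\dim W=n-k_{\ph}$, so this inequality is equivalent to $\dim T\geq k_f$, already established. Hence I may assume $\ph$ is birational, i.e.\ $\dim W=n$ and $\ph$ is an isomorphism away from $E:=\Exc(\ph)$; the goal becomes $\dim T\geq k_f+k_{\ph}$. I fix $y\in Y$ general and a general point $x\in F_y\cap E$ (nonempty since $E$ dominates $Y$), chosen so that $\ph(x)$ is a general point of $\ph(E)$; then the fibre $G:=\ph^{-1}(\ph(x))$ is contained in $E$ and has dimension $k_{\ph}$. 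Because $f$ is finite on $G$, the image $B:=f(G)\subseteq Y$ has dimension $k_{\ph}$, and I set $N:=f^{-1}(B)$, so that $\dim N=k_f+k_{\ph}$.

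The two remaining points, which form the heart of the argument, are to show $\ph(N)\subseteq T$ and $\dim\ph(N)=\dim N$. For the first, I note that for general $b\in B$ the fibre $F_b$ is a general, hence single $V$-equivalence-class, fibre of $f$, and it meets $G$ at a point $g$ with $\ph(g)=\ph(x)\in T$; thus $\ph(F_b)$, lying in a single $V'$-class containing $\ph(x)$, is contained in $T$, and taking the union over $b$ and passing to the closure yields $\ph(N)\subseteq T$. For the second, since $F_b\not\subseteq E$ for general $b$ (otherwise $E$ would contain a general fibre of $f$ and so have dimension $n$), the image $\ph(N)$ is not contained in $\ph(E)$; as $\ph$ is an isomorphism outside $E$, a general point of $\ph(N)$ has finite $\ph$-preimage, so $\ph_{|N}$ is generically finite and $\dim\ph(N)=\dim N=k_f+k_{\ph}$. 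Combining, $\dim T\geq\dim\ph(N)=k_f+k_{\ph}$, which is the claimed bound. The main obstacle is precisely this bookkeeping with the two equivalence relations: one must choose $x$ general enough that $G$ attains the expected dimension $k_{\ph}$ while simultaneously arranging that the fibres $F_b$ over $B=f(G)$ remain generic, so that each is a single $V$-class and therefore maps into $T$.
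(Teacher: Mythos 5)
Your argument is correct and essentially the same as the paper's: the first bound comes from $\ph$-images of general $f$-fibres (which are $V$-classes, on which $\ph$ is finite), and in the divisorial case you sweep out $T$ by the images of the $f$-fibres meeting a general fibre of $\ph_{|\Exc(\ph)}$ and use generic finiteness of $\ph$ there, which is exactly the paper's construction with $N_0$, $U_0\subseteq f(N_0)$, and $f^{-1}(U_0)$. One cosmetic caution: since $T$ need not be closed and $N=f^{-1}(B)$ may acquire extra components (with larger fibres) over special points of $B$, you should not ``pass to the closure'' nor claim $\ph(N)\subseteq T$ for all of $N$; instead restrict, as the paper does, to $f^{-1}(U_0)$ for an open subset $U_0\subseteq B$ over which the fibres are irreducible $V$-classes of dimension $k_f$ not contained in $\Exc(\ph)$ --- the conclusion $\dim T\geq k_f+k_{\ph}$ is unaffected.
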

\noindent Observe that the second case always holds if $\ph$ is of fiber type.
\begin{proof}[Proof of the Claim]
The inverse image ${\ph}^{-1}(T)$ is closed for
$V$-equivalence. Let $F$ be a general $V$-equivalence class contained
in ${\ph}^{-1}(T)$, so that $F$ is a fiber of $f$.
Since ${\ph}$ is finite on $F$ and ${\ph}(F)\subseteq T$, we have 
$\dim T\geq \dim F=k_f$, while $\dim W\leq n$, which gives the first statement.

If $\ph$ is of fiber type, then $\dim W=n-k_{\ph}$, so the
same argument gives $\codim T\leq n-(k_f+k_{\ph})$.

Let's assume that $\ph$ is birational and 
that $\Exc(\ph)$ dominates $Y$ via
  $f$. 
Then $F\cap\Exc(\ph)\neq\emptyset$, 
so $T\cap
  \ph(\Exc(\ph))\neq \emptyset$, and since $T$ is general, every
  $V'$-equivalence class intersects $\ph(\Exc(\ph))$.  
This means that ${\ph}^{-1}(T)$
  contains a general fiber $N$ of $\ph_{|\Exc(\ph)}$.
 Let $N_0$ be an irreducible component of~$N$ with $\dim
  N_0=k_{\ph}$.

There exists a non empty open subset $U_0$ of $f(N_0)$ such that every fiber of
$f$ over $U_0$ is an irreducible $V$-equivalence class. Then
$f^{-1}(U_0)$ is irreducible of dimension
$k_f+k_{\ph}$, and it must be contained in $\ph^{-1}(T)$
because  ${\ph}^{-1}(T)$ is closed for
$V$-equivalence. Moreover 
$\ph_{|f^{-1}(U_0)}$ is birational, so $\dim T\geq k_f+k_{\ph}$.
\end{proof}
Consider now the linear subspaces $H_V$ of $\N(X)$ and $H_{V'}$ of $\N(W)$
generated by the
numerical classes of all irreducible components of all curves in
$V$ and $V'$ respectively. Then we have ${\ph}_*(H_V)=H_{V'}$, 
moreover  $\N(F,X)\subseteq H_{V}$ and
$\N(T,W)\subseteq H_{V'}$ by \cite[Proposition IV.3.13.3]{kollar}
(see also \cite[Remark 1]{unsplit}).

Since
 $f$ is quasi elementary, we get
 $\ker f_*=\N(F,X)\subseteq H_V\subseteq\ker f_*$, 
hence $\N(F,X)= H_V=\ker f_*$.
  Then we have:
\stepcounter{thm}
\begin{gather}
\notag
\N(T,W)\subseteq H_{V'} 
={\ph}_*(H_V)={\ph}_*(\N(F,X))=\N({\ph}(F),W)\subseteq  
\N(T,W),\\
\label{pippo}
\text{hence }\qquad\N(T,W)=H_{V'}={\ph}_*(\N(F,X))= {\ph}_*(\ker f_*).
\end{gather}

We observe that ${\ph}$ is either elementary and
divisorial, or quasi elementary of fiber type. Then in any case $W$ is
$\Q$-factorial and has canonical singularities,
by Lemma \ref{target} and \cite[Proposition~7.44]{debarreUT}.

\medskip

Let's assume now that $f$ is an
elementary contraction. Then
$\dim H_V=\dim H_{V'}=1$, and this means
that $V$ and $V'$ are quasi-unsplit in the sense of
\cite{unsplit}. Moreover, 
using the Claim we see that in any case 
the general $V'$-equivalence class has 
codimension at most $3$.
Hence \cite[Theorem 2]{unsplit} yields the existence of 
an elementary Mori contraction $g\colon W\to Z$ such
that every fiber of $g$ is a $V'$-equivalence class. Then $\dim Z\leq
3$, and
by rigidity (see for instance \cite[Lemma~1.15]{debarreUT}) there
exists a morphism $Y\to Z$ as in the 
statement.
\end{proof}
We observe that with a slight modification of the argument, we can
prove a different version of the previous Theorem. Namely we can
allow $f$ to be quasi elementary instead of elementary, if we impose a
stronger condition on $k_f$ and $k_{\ph}$.
\begin{thm}
\label{face2}
Let $X$ be a smooth variety of dimension $n$ and consider two Mori
contractions $f\colon X\to Y$, $\ph\colon X\to W$
such that $\NE(f)\cap \NE({\ph})=\{0\}$ and $f$ is quasi elementary of
fiber type. 
 Let $k_f$ and $k_{\ph}$ be the
dimensions of the general fibers of $f$ and ${\ph}_{|\Exc(\ph)}$ respectively.

Assume that we are in one of the
following situations:
\begin{enumerate}[$(i)$]
\item ${\ph}$ is quasi elementary and
  $k_f+k_{\ph}\geq n-2$;
\item ${\ph}$ is
  elementary and divisorial, and $\dim Y\leq 2$;
\item ${\ph}$ is
  elementary and divisorial, $f(\Exc(\ph))=Y$,
and $k_{\ph}\geq\dim Y-2$.
\end{enumerate}
Then  there exists a
commutative diagram as \eqref{diag}
where $h\colon X\to Z$ is a contraction, $\dim Z\leq 2$, and
$\rho_X-\rho_Z\leq(\rho_X-\rho_Y)+(\rho_X-\rho_W)$ 
(equality holds except possibly in $(i)$). 
\end{thm}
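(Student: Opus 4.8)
The plan is to imitate closely the proof of Theorem~\ref{face}, modifying only those places where the elementarity of $f$ was actually used. As in that proof, I would first construct a covering family $V$ of rational curves in $X$ whose general member lies in a general fiber $F$ of $f$ and is very free in $F$; this works verbatim since it only uses that $f$ is a Mori contraction of fiber type with $F$ smooth and Fano. The key structural facts carry over unchanged: every $V$-equivalence class sits in a fiber of $f$, the general fibers of $f$ (of dimension $k_f$) are precisely the general $V$-equivalence classes, and $\ph$ induces a covering family $V'$ on $W$ via the normalization and Stein factorization construction. The dimension Claim likewise applies word for word, giving $\codim T\le n-k_f$ in general and $\codim T\le n-(k_f+k_\ph)$ when $f(\Exc(\ph))=Y$ (hence in case $(i)$ where $\ph$ is of fiber type).

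The crucial difference, and the heart of the matter, is that $f$ is now only quasi elementary rather than elementary. Since $f$ is quasi elementary we still get $\N(F,X)=H_V=\ker f_*$ as in \eqref{pippo}, so $\N(T,W)=H_{V'}=\ph_*(\ker f_*)$; but now $\dim H_V=\rho_X-\rho_Y$ may exceed $1$, so $V$ and $V'$ need no longer be quasi-unsplit. Consequently I cannot invoke \cite[Theorem~2]{unsplit} to produce an \emph{elementary} contraction of $W$. Instead I would appeal to the more general statement of \cite{unsplit} that produces a contraction $g'\colon W\to Z'$ (not necessarily elementary) whose fibers are the $V'$-equivalence classes, once the general class has codimension at most~$3$. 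Here is where the stronger numerical hypotheses $k_f+k_\ph\ge n-2$ (resp.\ $\dim Y\le 2$, resp.\ $k_\ph\ge\dim Y-2$) enter: combined with the Claim they force the general $V'$-class to have codimension at most~$2$, so $\dim Z\le 2$. The corresponding quotient $h\colon X\to Z$ of $X$ by $V$-equivalence then has $\dim Z\le 2$ as well, and rigidity produces the factorizations through $W$ and $Y$ filling in diagram~\eqref{diag}.

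For the final numerical inequality I would argue as follows. The fibers of $h$ are the $V$-equivalence classes, which contain the fibers of $f$, so $\NE(f)\subseteq\NE(h)$ and $\ker f_*\subseteq\ker h_*$; similarly $\NE(\ph)$ meets $\ker h_*$ because $\ph$ contracts nothing in $V$ only transversally to $f$, and in fact the relevant kernels satisfy $\ker h_* \subseteq \ker f_* + \ker\ph_*$. This gives
\[
\rho_X-\rho_Z=\dim\ker h_*\le\dim\ker f_*+\dim\ker\ph_*=(\rho_X-\rho_Y)+(\rho_X-\rho_W),
\]
which is the asserted bound. To see that equality holds outside case~$(i)$, note that when $\ph$ is elementary and divisorial the two kernels $\ker f_*$ and $\ker\ph_*$ meet only in $\{0\}$ (since $\NE(\ph)\not\subseteq\NE(f)$ and $\NE(\ph)$ is a ray), so their spans are in direct sum inside $\ker h_*$ and the inequality is forced to be an equality; only in the fiber-type case~$(i)$ can the family $V'$ acquire extra classes that collapse the dimension count.

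The step I expect to be the main obstacle is precisely the replacement of \cite[Theorem~2]{unsplit} by its non-quasi-unsplit analogue and verifying that the codimension bound coming from the Claim genuinely drops from $3$ to $2$ under the strengthened hypotheses; one must check carefully that in each of $(i)$, $(ii)$, $(iii)$ the inequality $\codim T\le 2$ holds and that the resulting $Z$ is normal so that rigidity applies. The numerical inequality and its equality clause are then essentially bookkeeping on the kernels, as sketched above.
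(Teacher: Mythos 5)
Your outline follows the paper's own strategy step by step (same construction of $V$ and $V'$, same Claim, same rigidity and kernel bookkeeping at the end), but it has a genuine gap at exactly the step you defer: the existence of the quotient morphism $g\colon W\to Z$. You propose to ``appeal to the more general statement of \cite{unsplit} that produces a contraction (not necessarily elementary) whose fibers are the $V'$-equivalence classes, once the general class has codimension at most $3$''. No such statement exists: all the quotient results of \cite{unsplit} assume the family is quasi-unsplit, and the whole difficulty here is that $V'$ is \emph{not} quasi-unsplit when $f$ is not elementary. Your framing is also internally inconsistent: if a codimension-$3$ quotient theorem for arbitrary covering families were available, the strengthened numerical hypotheses $(i)$--$(iii)$ would be pointless, and Theorem \ref{face2} would simply hold under the weaker hypotheses of Theorem \ref{face}. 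The result whose proof can actually be generalized beyond the quasi-unsplit case is the quotient-as-a-morphism statement \cite[Theorem 1]{unsplit} (via \cite[Proposition 1]{unsplit}), whose threshold is codimension $2$; this is precisely why the hypotheses are strengthened here.

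The missing idea, which is the heart of the paper's proof, is that \eqref{pippo} --- the identity $\N(T,W)=H_{V'}=\ph_*(\ker f_*)$, which you derive but never exploit --- provides a substitute for quasi-unsplitness. Concretely: since $W$ is $\Q$-factorial, any Weil divisor $D$ on $W$ whose support is disjoint from the general $V'$-equivalence class $T$ satisfies $D\cdot c=0$ for all $c\in\N(T,W)=H_{V'}$, hence $D\cdot C=0$ for every irreducible component $C$ of every curve of $V'$. The quasi-unsplit hypothesis enters the proof of \cite[Proposition 1]{unsplit} only through this property, so that proof, and then the argument of \cite[Theorem 1]{unsplit}, run unchanged and yield the morphism $g\colon W\to Z$ (in general not a Mori contraction) whose fibers are the $V'$-equivalence classes. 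Without this argument, or some replacement for it, your proof does not go through. Once $g$ exists, the rest of your sketch is essentially correct, except that your inclusion $\ker h_*\subseteq\ker f_*+\ker\ph_*$ is asserted rather than proved: it follows from $\ker g_*=H_{V'}=\ph_*(\ker f_*)$, which gives $\ker h_*=(\ph_*)^{-1}(H_{V'})=\ker f_*+\ker\ph_*$; your direct-sum argument for equality in cases $(ii)$ and $(iii)$ then matches the paper's observation that $\ker f_*\cap\ker\ph_*=\{0\}$ as soon as $\ph$ is elementary.
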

\begin{proof}
We perform the same construction as in the previous proof, so that
$W$ is a normal and $\Q$-factorial projective variety, and $V'$
is a covering family of rational curves in $W$.
Moreover by the Claim and the assumptions, we see  that now
the general $V'$-equivalence class has 
codimension at most $2$ in $W$. On the other hand,
$V'$ is not quasi-unsplit if
$f$ is not elementary.
However
\eqref{pippo} implies the following property.
\begin{quote}
\emph{Let $D$ be a Weil divisor in $W$ whose support is
 disjoint from the general $V'$-equivalence class. Then $D\cdot C=0$
 for every irreducible component of every curve in $V'$.}
\end{quote}
We claim that we can apply \cite[Proposition 1]{unsplit} 
to the family $V'$ on $W$, even if $V'$ is not quasi-unsplit.
Indeed the quasi-unsplit assumption
is used in the proof of this Proposition 
uniquely to deduce the property above. One can think
that the property above generalizes the quasi-unsplit property, in the
same way as quasi elementary contractions generalize elementary
contractions of fiber type.

Now as in the proof of \cite[Theorem 1]{unsplit}, we get 
 the existence of a normal
projective variety $Z$
and a surjective morphism $g\colon W\to Z$ such
that every fiber of $g$ is a $V'$-equivalence class (even if $g$ is
not necessarily a Mori contraction).

As before this implies the existence of the diagram \eqref{diag} by
rigidity, and $h:=g\circ\ph$ is a contraction.
Moreover it
is clear that $\ker g_*=H_{V'}$ and
$$ \ker h_*=({\ph}_*)^{-1}(H_{V'})=({\ph}_*)^{-1}({\ph}_*(\ker f_*))=\ker
f_*+\ker {\ph}_*.$$
Observe that the hypothesis $\NE(f)\cap\NE(\ph)=\{0\}$ does not imply
$\ker f_*\cap\ker \ph_*=\{0\}$, unless we know that one of $f$ or
$\ph$ is elementary, as in $(ii)$ or $(iii)$. Thus $\dim\ker h_*\leq
\dim \ker f_*+\dim\ker \ph_*$, and equality holds except possibly in
$(i)$. 
\end{proof}
\begin{remark}
\label{dim}
In both Theorems \ref{face} and \ref{face2}, we have in fact proved
that $\dim Z\leq\dim Y$ in case $(ii)$, and $\dim Z\leq
n-(k_f+k_{\ph})=\dim Y-k_{\ph}$ in cases $(i)$ and $(iii)$.
Thus the contraction $Y\to Z$ is of fiber type in case $(i)$,
and elementary of fiber type in case $(iii)$.
\end{remark}
\section{Fano manifolds with a quasi elementary contraction onto a surface}
\label{surf}
In this section we show Theorem \ref{result} $(i)$.
So let's consider a smooth Fano variety $X$ of dimension $n\geq 3$,
 and 
 a quasi elementary contraction $f\colon X\to Y$ onto a surface. 
We know by Lemma \ref{target} $(iii)$ that $f$ is equidimensional and
$Y$ is smooth. 
Moreover $Y$ is rational, for instance
because $X$ is rationally connected.
\begin{remark}
\label{nuoto}
Let $\psi\colon Y\to Z$ be a birational elementary contraction.
Then $Z$ is smooth, $\psi$ is the blow-up of a point $z_0\in
Z$, and
there exists a diagram
$$
\xymatrix{ X \ar[d]_f \ar[r]^{\ph} & W\ar[d]^g \\
             Y \ar[r]^{\psi} & {Z}}
$$
where $W$ is a smooth Fano variety,
$g$ a quasi elementary contraction with smooth fiber $F_0$ over $z_0$,
 $\ph$ is the blow-up of $F_0$,
  $\Exc(\ph)\cong\pr^1\times F_0$,
and
$f_{|\Exc(\ph)}$, $\ph_{|\Exc(\ph)}$ are the two projections.

In fact $\psi$ has a lifting as explained in 
\ref{lifting}, which gives the diagram above.
Since $X$ is Fano, $\ph$ is a Mori contraction. Then
Theorem \ref{secondo} $(v)$ gives the statement.

Notice moreover that the lifting $\ph$ is uniquely determined by
$\psi$, because $\Exc(\ph)=f^{-1}(\Exc(\psi))\cong\pr^1\times F_0$ and
$\NE(\ph)$ is determined by the curves $\pr^1\times\{pt\}$ in $X$. 
As described in \ref{lifting}, every choice of an extremal ray 
$\widetilde{\alpha}$ in $\NE(\psi\circ f)$ such that
$\widetilde{\alpha}\not\subseteq\NE(f)$, gives rise to a lifting
of $\psi$. We deduce that such an extremal ray 
$\widetilde{\alpha}=\NE(\ph)$ is unique, and that $\NE(\psi\circ f)=
\widetilde{\alpha}+\NE(f)$.
\end{remark}
\begin{proof}[Proof of Theorem \ref{result} $(i)$]
If $\rho_Y=1$ then $Y\cong\pr^2$, so we can 
assume that $\rho_Y>1$.
In order to show that $Y$ is Del Pezzo, 
it is
enough to show that any elementary contraction of $Y$ is a Mori contraction,
see Remark \ref{fano}.

Let's fix such a contraction $\psi\colon Y\to Z$. 
If $\psi$ is birational, then Remark \ref{nuoto} says that 
$Z$ is smooth and $\psi$ is a
blow-up, hence a Mori contraction.

Let's consider the case where $\psi$
is not birational, 
so that $\dim Z\leq 1$. Since we have assumed $\rho_Y>1$,
we have $\dim Z=1$, and $Y$ is a smooth rational surface with
$\rho_Y=2$. Thus $Y$ is a Hirzebruch surface and
$Z\cong\pr^{1}$.
 If we
consider the other elementary contraction of $Y$, we find two
possibilities: either it is again a contraction over $\pr^1$ and
$Y\cong\pr^1\times\pr^1$, or it is birational. In this case, by
Remark~\ref{nuoto} it must
be a smooth blow-up, so $Y\cong \mathbb{F}_1$.

Hence $Y$ is a Del Pezzo surface, and we have the first part of the
statement.

\medskip

Let's assume that $\rho_Y\geq 3$ and 
prove the second part of the statement. Since $Y$ is Del Pezzo,
there is a morphism $Y\to\pr^2$ which is a
blow-up of $\rho_Y-1$ distinct points $p_i$. 
Thus we can apply Remark \ref{nuoto} to each blow-up 
and get a smooth Fano variety $X_0$ with a quasi
elementary contraction $f_0\colon X_0\to\pr^2$, 
such that $X$ is obtained from $X_0$
by blowing-up the fibers of $f_0$ over $p_i$. 

Therefore it is enough to prove the statement in the case
$\rho_Y=3$,
where $Y$ is the blow-up of $\pr^{2}$ in two points $p_1$ and $p_2$. 
Call $C_1$ and $C_2$ the two corresponding exceptional curves, and
$C_3$ the proper transform in $Y$ of the line through $p_1$ and $p_2$
in $\pr^2$. 

\smallskip

\begin{center}
{}{}\scalebox{0.30}{\includegraphics{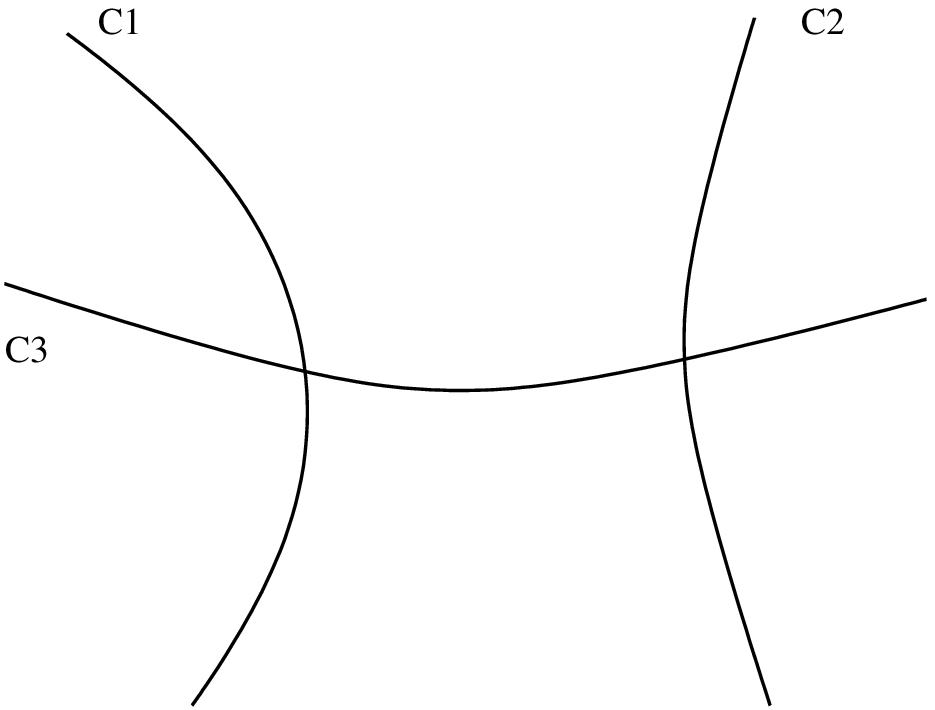}}
\end{center}

\noindent These are all the  
$(-1)$-curves in $Y$. Call $\alpha_i$ the extremal ray of $\NE(Y)$
containing the numerical class of $C_i$; then
$\NE(Y)=\alpha_1+\alpha_2+\alpha_3$ 
is simplicial.

\medskip

Let's make a preliminary remark on the cone $\NE(X)$. 
The surface $Y$ has three elementary contractions, which are blow-ups
with exceptional curves $C_1$, $C_2$, and $C_3$.
Recall from Remark \ref{nuoto} that the lifting of each of these
blow-ups is unique. This means
that for every $i=1,2,3$ there 
is a unique extremal ray
$\widetilde{\alpha}_i$ of $\NE(X)$ such that 
$\widetilde{\alpha}_i+\NE(f)$ is a face of $\NE(X)$ and
$f_*(\widetilde{\alpha}_i)={\alpha}_i$. 

Let's show that $X$ has no small elementary
contraction $\xi\colon
X\to V$ such that $\NE(\xi)\not\subseteq\NE(f)$.
 By contradiction, let $\xi\colon
X\to V$ be such a contraction. Since $f$ is finite on fibers
of $\xi$, these fibers have dimension at most $2$.
If $F_0$ is an irreducible component of a fiber of $\xi$ with $\dim
F_0=2$, then $f(F_0)=Y$, so
$$(f_*)_{|\N(F_0,X)}\colon \N(F_0,X)\to\N(Y)$$
is surjective. This is impossible because $\dim \N(F_0,X)=1$ while
$\rho_Y=3$. 
 
On the other hand, a small contraction can not have only 
$1$-dimensional fibers,
see \cite[Corollary on p.\ 145]{wisn}. Thus we have a contradiction.

Consider now an extremal ray $\beta$ of $\NE(X)$ which is not
contained in $\NE(f)$. Since the contraction of
$\beta$ is not small, Theorem  
\ref{face2} $(ii)$
implies that there is face $\NE(h)$ ($h$ as in \eqref{diag})
of $\NE(X)$, of dimension
$1+\dim\NE(f)$, containing both
$\beta$ and $\NE(f)$. Then 
$f_*(\beta)$ is an extremal ray of $\NE(Y)$,
say $\alpha_1$. This implies that $\beta=\widetilde{\alpha}_1$.

Summing up, we have shown that:
\begin{quote}
\emph{$\widetilde{\alpha}_1$, $\widetilde{\alpha}_2$, 
and $\widetilde{\alpha}_3$ are the only
extremal rays of $\NE(X)$ outside $\NE(f)$}. 
\end{quote}

\smallskip

Let's show that $X$ is a product. 
Applying Remark \ref{nuoto} we get 
the following sequence:
$$\xymatrix{ X \ar[d]_f\ar[r]_{\ph_1}\ar@/^1pc/[rr]^{\zeta} &
  {W_1}\ar[d]\ar[r] & {X_0} \ar[d]^{f_0}\\
Y\ar[r]_{\psi_1} & {\mathbb{F}_1}\ar[r] & {\pr^2}
 }$$
where $\psi_1\colon Y\to \mathbb{F}_1$ is the contraction of $C_1$ and 
$\mathbb{F}_1\to {\pr^2}$ is the contraction of (the image of) $C_2$,
  hence $\zeta\colon X\to X_0$ is  
the contraction of
$\widetilde{\alpha}_1+\widetilde{\alpha}_2$.

Since $\dim X_0\geq 3$, $\NE(f_0)$ is a proper face of $\NE(X_0)$, and
there is at least one extremal ray $\beta$ of $\NE(X_0)$ outside
$\NE(f_0)$. As explained in \ref{lifting}, $\beta$ must be
the image via $\zeta_*$ of some extremal ray $\widetilde{\beta}$
of $\NE(X)$ not lying in
$\NE(\zeta)=\widetilde{\alpha}_1+\widetilde{\alpha}_2$.  
Since $\zeta_*(\NE(f))=\NE(f_0)$ and
$\beta\not\subseteq\NE(f_0)$, it must be
$\widetilde{\beta}\not\subseteq\NE(f)$. Thus the only possibility is 
  $\widetilde{\beta}=\widetilde{\alpha}_3$ and
$\beta=\zeta_*( \widetilde{\alpha}_3)$. 

Let
$$h\colon X_0\longrightarrow W_0$$
be the contraction of $\beta$. Observe that $\dim W_0\geq n-2$, because
$h$ is finite on fibers of $f_0$. 
Our goal is  to show that $X_0\cong \pr^2\times W_0$ and $f_0$ is
the projection, which implies the statement.
The first step is to show that $\dim W_0=n-2$.

For $i=1,2,3$ let $\ph_i\colon X \to W_i$ be the smooth blow-up obtained 
by contracting $\widetilde{\alpha}_i$.
Let $E_i\subset X$ be
the exceptional divisor of $\ph_i$,  $F$ the fiber of $f$ over $C_1\cap
C_3$, and $F'$ the one over $C_2\cap C_3$. 
By Remark \ref{nuoto} 
we have $E_3\cong \pr^1\times F\cong\pr^1\times F'$, so $F\cong
   F'$, let's call it
$F$. 
Thus $E_i\cong\pr^1\times F$, $E_1\cap E_2=\emptyset$, while $E_1\cap
E_3$ and $E_2\cap E_3$ are fibers of $f$.
 
Let $\widetilde{C}_i\subset E_i$ be a curve corresponding to $\pr^1\times\{*\}$.
Then $[\widetilde{C}_i]\in\widetilde{\alpha}_i$, $\widetilde{C}_i\cdot E_i=-1$,
$f_*(\widetilde{C}_i)=C_i$, and $f^*(C_i)=E_i$. This gives: 
$$\widetilde{C}_1\cdot
E_3=\widetilde{C}_3\cdot E_1=\widetilde{C}_2\cdot
E_3=\widetilde{C}_3\cdot E_2=1\quad\text{while} 
\quad \widetilde{C}_1\cdot E_2=\widetilde{C}_2\cdot E_1=0.$$
Finally set $E_3':=\zeta(E_3)$ and  
$C_3':=\zeta_*(\widetilde{C}_3)$, so that $[C_3']\in\beta$.

Observe that $E_3'$ is a divisor in $X_0$ and there are
$m_1,m_2\in\Z_{\geq 0}$ such that
$$\zeta^*(E_3')=E_3+m_1E_1+m_2E_2.$$
Then 
$$0=E_3'\cdot \zeta_*(\widetilde{C}_1)=E_3\cdot
\widetilde{C}_1+m_1E_1\cdot \widetilde{C}_1+ 
m_2 E_2\cdot \widetilde{C}_1=1-m_1$$
so $m_1=1$, and similarly $m_2=1$. Hence
$$E_3'\cdot C_3'=(E_3+E_1+E_2)\cdot \widetilde{C}_3=1.$$
This says that $C_3'$ is a minimal element in the extremal ray
$\beta$, namely that for
every curve $C\subset X_0$ such that $[C]\in\beta$, we have $C\equiv
mC_3'$ with $m\in\Z_{>0}$. 

Moreover
$$K_X=\ph_1^*(K_{W_1})+E_1=\zeta^*(K_{X_0})+E_1+E_2,$$
which gives 
$$ -K_{X_0}\cdot C_3'=\zeta^*(-K_{X_0})\cdot
\widetilde{C}_3=(-K_X+E_1+E_2)\cdot \widetilde{C}_3=3.$$ 
This says that the length $l(\beta)$ of the extremal ray $\beta$, that
is the minimal anticanonical degree of rational curves whose class is
in $\beta$, is $3$. Now \cite[Theorem 1.1]{wisn} yields that for every
non trivial fiber $F$ of $h$ we have
$$\dim F+\dim\Lo(\beta)\geq n+2.$$
On the other hand $f_0$ is finite on fibers of $h$, so $\dim F\leq
2$. Therefore $\dim\Lo(\beta)=n$, every fiber of $h$ has dimension $2$,
and $\dim W_0=n-2$.

Now $f_0$ and $h$ induce a finite morphism $X_0\to \pr^2\times W_0$, let $d$
be its degree. 

Let $w_0\in W_0$ be a general point and $S_0=h^{-1}(w_0)$. Then
$S_0$ is a smooth surface and
$g:=(f_0)_{|S_0}\colon S_0\to\pr^2$ is finite of degree $d$.

Since in $X$ the divisor $E_3$ intersects trasversally $E_1$ and
$E_2$, $\zeta_{|E_3}$ is an isomorphism, so that $E_3'\cong
\pr^1\times F$ and $(f_0)_{|E_3'}$ is the first projection.

Observe that $f_0(E_3')$ is a line $l$ (the line through $p_1$ and $p_2$), 
and $h_{|E_3'}\colon E_3'\to W_0$ is
surjective, so it factors as
$$ E_3'\stackrel{\pi_2}{\longrightarrow} F
\stackrel{\xi}{\longrightarrow} W_0$$
where $\pi_2$ is the second projection and $\xi$ is a finite
morphism. 
We have
$$g^{-1}(l)=(f_0)^{-1}(l)\cap S_0
= E_3'\cap S_0=E_3'\cap h^{-1}(w_0)=
(h_{|E_3'})^{-1}(w_0)\cong\pr^1\times\xi^{-1}(w_0).$$
On the other hand $g^{-1}(l)$ is the support of an ample divisor in
$S_0$ and hence it is connected. This implies that $\xi$ is an
isomorphism
 and that $\widetilde{l}:=g^{-1}(l)\simeq \pr^1\times w_0$.

Then $g^{*}(l)=d\,\widetilde{l}$; on the other hand
$g_{|\widetilde{l}}=(f_0)_{|\widetilde{l}}$ 
is an isomorphism, so $g_*(\widetilde{l})=l$.
Now  we have
$$1=l^2=l\cdot g_*\bigl(\,\widetilde{l}\,\bigr)=g^*(l)\cdot
\widetilde{l}=d\bigl(\,\widetilde{l}\,\bigr)^2,$$ 
so $d=1$ and we are done.
\end{proof}
\section{Fano manifolds with a quasi elementary contraction onto a $3$-fold}
\label{3fold}
In this section we prove Theorem \ref{result} $(ii)$.
Let's consider a  smooth Fano variety $X$ 
of dimension $n\geq 4$, 
and  a quasi
elementary contraction $f\colon X\to Y$
with $\dim Y=3$.

Recall that $Y$ is factorial and has at most isolated canonical
singularities by Lemma \ref{target} $(i)$ and~$(iv)$.

Let $\psi\colon Y\to Z$ be an elementary contraction of $Y$. We
consider all possibilities for $\psi$.

 If $\psi$ is
of type (3,0), then $Z$ is a point and
  $Y$ is Fano with $\rho_Y=1$. 

If $\psi$ is of type (3,1), then $Z\simeq\pr^1$ and
$\rho_Y=2$. In this case the other elementary contraction of $Y$ can
not be again of type (3,1), because non trivial fibers of distinct
elementary contractions can intersect at most in finitely many points,
and $Y$ is factorial.
\begin{claim} 
\label{(3,2)}
If
$\psi\colon Y\to Z$ 
is of type (3,2), then $\psi$ is equidimensional,
and we have the following possibilities: 
\begin{enumerate}[$(i)$]
\item $\rho_Y=2$ and $Z\cong\pr^2$;
\item $\rho_Y=3$ and $Z$ is either $\pr^1\times\pr^1$ or
  $\mathbb{F}_1$;
\item $Z$ is a smooth Del Pezzo surface with $\rho_Z\geq 3$, 
$Y\cong  Z\times\pr^1$, $\psi$ is the projection, and
$X\cong  Z\times W$, where $W$ is a smooth Fano variety of dimension
$n-2$ with a quasi elementary contraction onto $\pr^1$.
\end{enumerate}
\end{claim}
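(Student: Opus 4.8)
The plan is to reduce the whole statement to Theorem \ref{result}~$(i)$ by passing to the composition $h:=\psi\circ f\colon X\to Z$. Since $\psi$ is elementary of fiber type (a contraction of type $(3,2)$ has $\Exc(\psi)=Y$, so it is of fiber type onto the surface $Z$, with $1$-dimensional general fiber), Example \ref{compo} applies and shows that $h$ is again quasi elementary; it is a contraction of fiber type onto the surface $Z$, and $X$ is smooth Fano of dimension $n\geq 4\geq 3$. Applying Theorem \ref{result}~$(i)$ to $h$ I obtain at once that $Z$ is a smooth Del Pezzo surface and that $h$ is equidimensional, and moreover that $X\cong Z\times G$ with $h$ the first projection as soon as $\rho_Z\geq 3$, where $G$ denotes a general fiber of $h$, a smooth Fano variety of dimension $n-2$.

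The equidimensionality of $\psi$ then follows from a dimension count. Every fiber of $f$ has dimension $\geq n-3$, so if some fiber $\psi^{-1}(z)$ had dimension $2$, then $h^{-1}(z)=f^{-1}(\psi^{-1}(z))$ would have dimension $\geq n-1$, contradicting that all fibers of $h$ have dimension $n-2$. Hence $\psi$ is equidimensional with all fibers of dimension $1$. Since $\psi$ is elementary we have $\rho_Y=\rho_Z+1$, and as $Z$ is Del Pezzo the cases $\rho_Z=1$ and $\rho_Z=2$ give exactly $Z\cong\pr^2$ (so $\rho_Y=2$) and $Z\in\{\pr^1\times\pr^1,\mathbb{F}_1\}$ (so $\rho_Y=3$): these are $(i)$ and $(ii)$.

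It remains to treat $\rho_Z\geq 3$, where $X\cong Z\times G$ with $h$ the projection. I would set $W:=G$ and work inside the product decomposition $\N(X)=\N(Z)\oplus\N(W)$, under which $\NE(X)$ splits as $\NE(Z)\oplus\NE(W)$. Because $\psi\circ f=h$ is the projection, $\ker f_*\subseteq\ker h_*=\N(W)$, and since $\psi$ is elementary this inclusion has codimension one; hence $\NE(f)=\ker f_*\cap\NE(X)=\ker f_*\cap\NE(W)$ is a codimension-one face of $\NE(W)$. As $W$ is Fano, this face is the cone of a contraction $\pi\colon W\to W'$ with $\rho_{W'}=1$. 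Restricting $f$ to a general fiber $\{z\}\times W\cong W$ of $h$ identifies $\pi$ with $f_{|W}\colon W\to\psi^{-1}(z)$, since the curves it contracts are exactly those with class in $\ker f_*$; thus $\pi$ is of fiber type onto the smooth rational curve $W'=\psi^{-1}(z)\cong\pr^1$, with general fiber $F$ equal to a general fiber of $f$. Finally, using $\ker f_*=\N(F,X)$ together with the factorization $F\hookrightarrow\{z\}\times W\hookrightarrow X$, I get $\N(F,W)=\ker\pi_*$, so $\pi$ is quasi elementary.

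To conclude $(iii)$ I would identify $f$ with $\Id_Z\times\pi$: the contraction $\Id_Z\times\pi\colon Z\times W\to Z\times\pr^1$ contracts precisely the curves $\{z\}\times C$ with $C$ contracted by $\pi$, so $\NE(\Id_Z\times\pi)=\NE(\pi)=\NE(f)$ as a face of $\NE(X)$. Since a contraction is determined up to isomorphism by its associated face, $f\cong\Id_Z\times\pi$, whence $Y\cong Z\times\pr^1$ and, from $\psi\circ f=h$, the contraction $\psi$ is the first projection. The main obstacle is this last step: one must justify the splitting $\overline{\NE}(X)=\overline{\NE}(Z)\oplus\overline{\NE}(W)$, single out the codimension-one face of $\NE(W)$ cut out by $\ker f_*$ and check it is contracted fibrewise onto a $\pr^1$, and then recognize $f$ and $\psi$ as the product projections through the rigidity of contractions; by comparison the reduction to Theorem \ref{result}~$(i)$ and the cases $\rho_Z\leq 2$ are routine.
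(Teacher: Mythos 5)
Your proposal is correct and takes essentially the same route as the paper: both pass to the composition $\psi\circ f$, which is quasi elementary by Example \ref{compo}, apply Theorem \ref{result} $(i)$ to conclude that $Z$ is a smooth Del Pezzo surface and $\psi\circ f$ (hence $\psi$) is equidimensional, and then split into cases according to $\rho_Z$, with the product structure $X\cong Z\times W$ coming again from Theorem \ref{result} $(i)$ when $\rho_Z\geq 3$. The only divergence is at the end of case $(iii)$: the paper simply asserts that any intermediate contraction $Z\times W\to V\to Z$ must be a product $V\cong Z\times F$, whereas you prove this explicitly via the splitting $\NE(X)=\NE(Z)\oplus\NE(W)$, contraction of the codimension-one face $\ker f_*\cap\NE(W)$, and rigidity of contractions --- a legitimate filling-in of a step the paper leaves to the reader.
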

\begin{proof}
We have $\dim Z=2$ and $\psi\circ
f\colon X\to Z$ is quasi elementary (see example~\ref{compo}).
By Theorem~\ref{result} $(i)$,
$\psi\circ f$ is equidimensional and
$Z$ is a smooth Del Pezzo surface. Then $\psi$ must be equidimensional
too.

If $\rho_Z=1,2$, we get the first
two cases. If $\rho_Z\geq 3$, again by Theorem~\ref{result} $(i)$ we see that 
$X\cong
  Z\times W$ where $W$ is a smooth Fano variety of dimension $n-2$,
  and $\psi\circ f$ is the projection onto $Z$.
Then any intermediate
  contraction $Z\times W\to V\to Z$ must be onto a product $V\cong Z\times
  F$, hence we get
  $Y\cong Z\times\pr^1$.
Moreover $f$ induces a contraction $g\colon W\to\pr^1$. Since $f$ is
quasi elementary, it is easy to see that $g$ is quasi elementary too.
\end{proof}
\begin{claim} \label{(2,0)}
If
$\psi$ is of type (2,0), then we have the following possibilities:
\begin{enumerate}[$(i)$]
\item $\rho_Y=2$ and $Y$ has an  elementary contraction of type (3,2);
\item $\rho_Y=3$ and $Y$ has a birational elementary contraction
with fibers of
dimension at most $1$.
\end{enumerate}
\end{claim}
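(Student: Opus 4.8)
The plan is to use the prime divisor $S:=\Exc(\psi)$ as the divisor with small $\N(\,\cdot\,,Y)$ that feeds into Proposition \ref{divisor}. First I would observe that, since $\psi$ is elementary and of type $(2,0)$, it is a divisorial contraction; by Remark \ref{divisorial} its exceptional locus $S$ is an irreducible (hence prime) divisor in $Y$ and $S\cdot\NE(\psi)<0$. Because $\psi$ contracts $S$ to a point, every curve contained in $S$ is $\psi$-contracted, so its numerical class lies in the extremal ray $\NE(\psi)$; as any irreducible curve has nonzero class (by ampleness), this gives $\N(S,Y)=\NE(\psi)$, which is one-dimensional.

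With this in hand, the main step is a direct appeal to Proposition \ref{divisor}: recalling that $f\colon X\to Y$ is quasi elementary of fiber type, that $Y$ is factorial by Lemma \ref{target}, that $\dim Y=3\ge 3$, and that $S$ is a prime divisor with $\dim\N(S,Y)=1$, we conclude $\rho_Y\le 3$. I would then invoke the refinement recorded in Remark \ref{dettaglio}, which states that either $\rho_Y=1$, or $\rho_Y=2$ and $Y$ has an elementary contraction of fiber type with one-dimensional fibers, or $\rho_Y=3$ and $Y$ has a divisorial elementary contraction all of whose fibers have dimension at most $1$. The case $\rho_Y=1$ is incompatible with the existence of a type $(2,0)$ contraction: when $\rho_Y=1$ the only elementary contraction maps $Y$ to a point and has exceptional locus all of $Y$, of dimension $3$ rather than $2$. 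So it is discarded at once, and $\rho_Y\in\{2,3\}$.

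It then remains only to match the two surviving cases with $(i)$ and $(ii)$. For $\rho_Y=3$ this is immediate, since a divisorial contraction is in particular birational, giving $(ii)$. The one point needing a word is the $\rho_Y=2$ case, which I expect to be the only place where a little care is required rather than a genuine obstacle: here I would note that an elementary \emph{fiber type} contraction of the threefold $Y$ with one-dimensional general fibers must map onto a surface (the image has dimension $\dim Y-1=2$), and, being of fiber type, is an isomorphism nowhere, so its exceptional locus is all of $Y$; it is therefore of type $(3,2)$, which is exactly $(i)$. The entire content is carried by Proposition \ref{divisor} and Remark \ref{dettaglio}, and the only work beyond citing them is this bookkeeping translating their conclusions into the language of contraction types.
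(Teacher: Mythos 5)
Your proposal is correct and follows essentially the same route as the paper's own proof: observe that $S=\Exc(\psi)$ is a prime divisor with $\dim\N(S,Y)=1$, apply Proposition~\ref{divisor} to get $\rho_Y\leq 3$, note that the existence of the (non trivial, birational) contraction $\psi$ excludes $\rho_Y=1$, and conclude via Remark~\ref{dettaglio}. The only difference is that you spell out the bookkeeping (irreducibility of $S$ via Remark~\ref{divisorial}, and the translation of the two cases of Remark~\ref{dettaglio} into types $(3,2)$ and birational with fibers of dimension at most $1$), which the paper leaves implicit.
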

\begin{proof}
The divisor $\Exc(\psi)$ satisfies $\dim\N(\Exc(\psi),Y)=1$, hence
Proposition~\ref{divisor} yields $\rho_Y\leq 3$. However $Y$ has a non
trivial elementary contraction, hence $\rho_Y>1$. Then the statement
follows from Remark \ref{dettaglio}.
\end{proof}
\begin{claim}\label{curry}
Let
$\psi\colon Y\to Z$ be an elementary contraction
which is birational with fibers of dimension at
most $1$. 
Then $\psi$ is the blow-up of a smooth
             curve $C\subset Z_{reg}$, and we have a commutative
             diagram
$$\xymatrix{ X \ar[d]_f \ar[r]^{\ph} & W\ar[d]^g \\
             Y \ar[r]^{\psi} & {Z}}$$
where $W$ is smooth, $\ph$ is the blow-up of a smooth subvariety of
             codimension $2$, $\Exc(\ph)=f^*(\Exc(\psi))$, 
and $g$ is a quasi elementary Mori
             contraction.
\end{claim}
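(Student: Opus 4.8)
The plan is to obtain this claim as a direct application of Theorem \ref{secondo} specialized to $\dim Y = 3$. The only hypothesis of that theorem not already built into the present setup is the existence of a lifting of $\psi$, so the first step is to produce one. Since $X$ is Fano and $\psi\colon Y\to Z$ is a contraction, the general construction of \ref{lifting} applies: I would pick an extremal ray $\widetilde{\alpha}$ of $\NE(X)$ with $\widetilde{\alpha}\cap\NE(f)=\{0\}$ and $f_*(\widetilde{\alpha})=\NE(\psi)$, let $\ph\colon X\to W$ be its contraction, and invoke rigidity to get the morphism $g\colon W\to Z$ completing the commutative square. Because $\psi$ is elementary, this forces $\rho_X-\rho_W=\rho_Y-\rho_Z=1$, so $\ph$ is itself elementary and is a genuine lifting in the sense of \ref{boh}.

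The second step is trivial: $X$ is Fano, so every contraction of $X$ is a Mori contraction, and in particular $\ph$ is one. At this point all the hypotheses of Theorem \ref{secondo} are in force — $f$ is a quasi elementary Mori contraction of fiber type with $X$ smooth, $\psi$ is birational elementary with fibers of dimension at most $1$, and $\psi$ admits a lifting $\ph$ — so I can simply read off the conclusions. Part $(i)$ gives that $W$ is smooth and $\ph$ is the blow-up of a smooth subvariety of codimension $2$; part $(iii)$ gives $\Exc(\ph)=f^*(\Exc(\psi))$; part $(ii)$ gives that $g$ is a quasi elementary Mori contraction; and part $(iv)$, which is exactly the case $\dim Y=3$, asserts that $\psi$ is the blow-up of a smooth curve $C\subset Z_{reg}$. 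Assembling these statements yields the full claim.

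There is essentially no independent difficulty here, since all the substantive work is already contained in Theorem \ref{secondo} — whose proof handled the delicate points, namely the smoothness of $\Exc(\psi)$ via the local equation $E=f^*(S)$, the normality of $S$ through Serre's criterion (using that $Y$ is Cohen--Macaulay with isolated singularities), the equidimensionality of $f$ over $\Exc(\psi)$, and the length computation $-K_Y\cdot l=1$ identifying $\psi_{|\Exc(\psi)}$ as a $\pr^1$-bundle. The only points that genuinely use the hypotheses of the claim are that $\psi$ be birational with fibers of dimension at most $1$; the Fano assumption on $X$ is precisely what guarantees the lifting exists, and without it the claim would not follow from the general machinery.
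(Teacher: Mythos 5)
Your proposal is correct and coincides with the paper's own proof: the paper likewise constructs the lifting $\ph$ via the general procedure of \ref{lifting}, notes that $\ph$ is a Mori contraction because $X$ is Fano, and then reads off all conclusions from Theorem \ref{secondo}, with part $(iv)$ handling the case $\dim Y=3$. Your additional verification that any extremal ray $\widetilde{\alpha}$ with $f_*(\widetilde{\alpha})=\NE(\psi)$ and $\widetilde{\alpha}\cap\NE(f)=\{0\}$ yields an elementary lifting (so that $\rho_X-\rho_W=\rho_Y-\rho_Z=1$) is exactly the detail the paper leaves implicit by citing \ref{lifting}.
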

\begin{proof}
Recall that $\psi$ has a lifting as explained in
\ref{lifting}, so we have a diagram as above. 
Since $X$ is Fano, $\ph$ is a Mori contraction. Then 
Theorem \ref{secondo} yields the statement.
\end{proof}
\noindent In particular Claim \ref{curry} implies that \emph{$Y$ has no
small elementary contractions}.
\begin{claim}\label{basta}
In the setting of Claim \ref{curry}, suppose that $Y$ is Fano. Set
$E:=\Exc(\ph)$ and $S:=\Exc(\psi)$. Then
the following statements are equivalent:
\begin{enumerate}[$(i)$]
\item $W$ is not Fano;
\item $Z$ is not Fano;
\item $S\cong\pr^1\times\pr^1$ with normal bundle
$\mathcal{O}_{\pr^1\times\pr^1}(-1,-1)$, and 
there is an extremal ray $\beta$ of $\NE(Y)$,
different from $\NE(\psi)$, such that $S\cdot\beta<0$.  
\end{enumerate}
\end{claim}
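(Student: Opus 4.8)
The plan is to introduce the auxiliary condition
$(P)$: \emph{there is an extremal ray $\beta$ of $\NE(Y)$, different from $\NE(\psi)$, with $S\cdot\beta<0$}, and to prove $(P)\Leftrightarrow(iii)$, $(ii)\Leftrightarrow(P)$ and $(i)\Leftrightarrow(P)$. Note that $(iii)\Rightarrow(P)$ is trivial. Throughout I would use the two discrepancy formulas $K_Y=\psi^*(K_Z)+S$ (valid because $\psi$ is the blow-up of the smooth curve $C\subset Z_{reg}$, so the discrepancy is $1$) and $K_X=\ph^*(K_W)+E$ (blow-up of a smooth codimension $2$ centre), together with the identity $E=f^*(S)$ from Claim \ref{curry} and Remark \ref{fano}. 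A structural point I would rely on is that, by \ref{lifting}, the lift of an extremal ray is again extremal, whereas the image of an extremal ray under $f_*$ need not be; this asymmetry shapes the whole argument.

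First I would settle $(P)\Rightarrow(iii)$, which carries the geometric content. If $\beta$ is as in $(P)$, then $S\cdot\beta<0$ forces $\Lo(\beta)\subseteq S$, so the contraction $\psi'$ of $\beta$ is birational with $\Exc(\psi')=S$ by Remark \ref{divisorial}; since $Y$ is Fano, $\psi'$ is a Mori contraction. Thus $S$ carries two distinct divisorial Mori contractions $\psi,\psi'$, and $Y$ is smooth along $S$ (Theorem \ref{secondo}), so $S$ is a smooth $\pr^1$-bundle over $C$. If $\psi'$ maps $S$ onto a curve, its fibres give a second ruling and $S\cong\pr^1\times\pr^1$; if $\psi'$ contracts $S$ to a point, the classification of divisorial contractions of smooth threefolds forces the exceptional surface to be $\pr^2$ or a smooth quadric, and as $S$ is ruled it must again be $\pr^1\times\pr^1$. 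In either case each ruling is the fibre direction of a blow-up of a smooth curve (resp. of the smooth-quadric-to-point contraction), whence $\mathcal{N}_{S/Y}\cdot\mathfrak{f}_i=-1$ for both rulings and $\mathcal{N}_{S/Y}\cong\mathcal{O}(-1,-1)$, giving $(iii)$.

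For $(ii)\Leftrightarrow(P)$ I would argue as follows. Assuming $(P)$, hence the structure of $(iii)$, write $\mathfrak{f}_1$ for the $\psi$-ruling and $\mathfrak{f}_2$ for the other; adjunction on the smooth surface $S$ gives $-K_Y\cdot\mathfrak{f}_2=-K_S\cdot\mathfrak{f}_2+S\cdot\mathfrak{f}_2=2-1=1$, while $\psi_*\mathfrak{f}_2=[C]$ together with $K_Y=\psi^*(K_Z)+S$ gives $-K_Y\cdot\mathfrak{f}_2=(-K_Z\cdot C)-(S\cdot\mathfrak{f}_2)=(-K_Z\cdot C)+1$; hence $-K_Z\cdot C=0$ and $Z$ is not Fano, so $(P)\Rightarrow(ii)$. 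Conversely, if $Z$ is not Fano then by Remark \ref{fano} applied to the quasi elementary $g\colon W\to Z$ there is an extremal ray $\delta$ of $\NE(Z)$ with $-K_Z\cdot\delta\le 0$; lifting $\delta$ through $\psi$ as in \ref{lifting} gives an \emph{extremal} ray $\gamma\neq\NE(\psi)$ of $\NE(Y)$ with $\psi_*\gamma=\delta$, and $-K_Y\cdot\gamma=-K_Z\cdot\delta-S\cdot\gamma$ yields $S\cdot\gamma=(-K_Z\cdot\delta)-(-K_Y\cdot\gamma)<0$, so $\gamma$ verifies $(P)$.

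Finally $(i)\Leftrightarrow(P)$, where the only genuine difficulty lies. Since $E=f^*(S)$ we have $E\cdot D=S\cdot f_*D$ for every curve $D\subset X$, so $E\cdot\NE(f)=0$. For $(P)\Rightarrow(i)$ I would lift $\beta$ through $f$ to an extremal ray $\tilde\beta$ of $\NE(X)$ with $E\cdot\tilde\beta=S\cdot\beta<0$, where $\tilde\beta\neq\NE(\ph)$ because $f_*\NE(\ph)=\NE(\psi)$ by Claim \ref{curry} while $f_*\tilde\beta=\beta\neq\NE(\psi)$; by \cite[Proposition 3.4]{wisn} the presence of such a ray means $W$ is not Fano. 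For the reverse, \cite[Proposition 3.4]{wisn} produces an extremal ray $\alpha\neq\NE(\ph)$ of $\NE(X)$ with $E\cdot\alpha<0$, whence $\alpha\not\subseteq\NE(f)$, the contraction of $\alpha$ is divisorial with exceptional divisor $E=f^*(S)$, and pushing a minimal curve of $\alpha$ down by $f$ gives a curve $\ell\subset S$ with $S\cdot\ell<0$ and $[\ell]\notin\NE(\psi)$. The \textbf{main obstacle} is precisely to upgrade $\ell$ to an honest \emph{extremal} ray of $\NE(Y)$ other than $\NE(\psi)$, since $f_*$ destroys extremality; I expect to overcome this by exploiting that the $\alpha$-contraction and $\ph$ are two divisorial contractions of $X$ sharing the exceptional divisor $E=f^*(S)$, and showing that this configuration descends through the quasi elementary $f$ to a second divisorial contraction of $Y$ with exceptional divisor $S$ — equivalently, that every ray of $\NE(Y)$ on which $S$ is negative is detected on $X$. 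Once such a $\beta$ is secured, $(P)$ holds and the structure theorem of the second paragraph closes the loop.
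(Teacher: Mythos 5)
Your hub condition $(P)$ gives a legitimate alternative architecture, and three of the spokes are essentially right: $(P)\Rightarrow(iii)$, $(P)\Rightarrow(ii)$, and $(ii)\Rightarrow(P)$ all work (with two small repairs: in $(ii)\Rightarrow(P)$ you should not invoke Remark \ref{fano} for $g\colon W\to Z$, since that remark presupposes the source is Fano and the Fano-ness of $W$ is exactly what is in question --- instead apply Lemma \ref{easy} to the contraction $X\to Z$ to get a ray $\delta$ of $\NE(Z)$ with $-K_Z\cdot\delta\leq 0$; and in $(P)\Rightarrow(iii)$ the subcase where $\beta$ contracts $S$ to a point should be excluded by $\dim\N(S,Y)=2$, not by the classification of contractions of \emph{smooth} threefolds, because $Y$ may be singular). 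The genuine gaps are the two implications touching $(i)$, which is where the content of this Claim lies. For $(i)\Rightarrow(P)$ you only name the obstacle (that $f_*$ destroys extremality) and express the hope of descending the configuration to $Y$; the paper's resolution is structurally different and never pushes extremality forward. From \cite[Proposition 3.4]{wisn} one gets an extremal ray $\alpha\neq\NE(\ph)$ of $\NE(X)$ with $E\cdot\alpha<0$; because $f$ is quasi elementary and $\dim\N(S,Y)=2$, no component of a fiber of the contraction of $\alpha$ can be $2$-dimensional (it would dominate $S$, forcing $\dim\N(S,Y)=1$), so all fibers are $1$-dimensional and \cite[Theorem 1.2]{wisn} makes that contraction a smooth blow-up $\ph_2$ with $\Exc(\ph_2)=E$. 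Pushing its fibers $C$ down by $f$ gives $S\cdot f_*(C)=E\cdot C=-1$, which directly produces the second ruling of $S$, the normal bundle $\mathcal{O}_{\pr^1\times\pr^1}(-1,-1)$, and $-K_Z\cdot\psi(S)=0$, i.e.\ $(ii)$ and the geometric half of $(iii)$; the extremal ray $\beta$ is then obtained by \emph{lifting} a non-positive ray of $\NE(Z)$ to $\NE(Y)$ --- your own $(ii)\Rightarrow(P)$ argument --- which is legitimate precisely because lifting, unlike pushing forward, preserves extremality when $Y$ is Fano.

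The second gap is $(P)\Rightarrow(i)$: you cite \cite[Proposition 3.4]{wisn} in the converse direction, ``existence of an extremal ray $\tilde\beta\neq\NE(\ph)$ with $E\cdot\tilde\beta<0$ implies $W$ is not Fano.'' That is not the direction used anywhere in the paper, and it does not follow formally: for a curve $\Gamma$ with class in $\tilde\beta$ one has $-K_W\cdot\ph_*(\Gamma)=-K_X\cdot\Gamma+E\cdot\Gamma$, and $E\cdot\Gamma<0$ alone does not make this $\leq 0$; one needs the length of $\tilde\beta$ to be at most $-E\cdot\tilde\beta$. This is why the paper, in its $(iii)\Rightarrow(i)$ step, does not quote any such converse but instead uses Claim \ref{curry}: the contraction of $\beta$ is birational with fibers of dimension at most $1$, hence lifts to a smooth blow-up $\ph_2\colon X\to W_2$ with $\Exc(\ph_2)=f^*(S)=E$, whose non-trivial fibers $C$ satisfy $-K_X\cdot C=1$ and $E\cdot C=-1$ and are not contracted by $\ph$, so that $-K_W\cdot\ph_*(C)=(-K_X+E)\cdot C=0$. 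Your framework can absorb this fix verbatim, but as written both implications involving $(i)$ are unproven.
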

\begin{proof}
$(i)\,\Rightarrow \,(ii)\ $
If $W$ is not Fano,
by \cite[Proposition 3.4]{wisn} 
there exists an
  extremal ray $\alpha$ of $\NE(X)$ such that $\alpha\neq\NE(\ph)$
  and $\alpha\cdot E<0$.
Observe that $\alpha$ is not contracted by $f$, because 
 $E\cdot C=0$ for every curve
$C\subset X$ contracted by $f$.

Let $\ph_2\colon X\to W_2$ be the
  contraction of $\alpha$ and let $F_0$ be an irreducible 
component of a non
  trivial fiber of $\ph_2$. Then $F_0\subset E$  and
$f$ is finite on $F_0$, hence $f(F_0)\subseteq S$ and $\dim F_0\leq 2$. 
Moreover if $\dim F_0=2$ then $f(F_0)=S$. This would imply that
$\dim\N(S,Y)=1$, while $\dim\N(S,Y)=2$. Thus
$\dim F_0=1$.

Now \cite[Theorem 1.2]{wisn} yields that $\ph_2$ is a smooth blow-up
with exceptional divisor $E$.
Observe that $S$ is a smooth $\pr^1$-bundle over $\psi(S)$. 
It is not difficult to see that the $\pr^1$-bundle 
$(\ph_2)_{|E}$ induces a second rational fibration on $S$, so that
$S\cong\pr^1\times\pr^1$.
Moreover if
$C\subset E$ is a non trivial fiber of $\ph_2$, then 
$$-1=C\cdot E=C\cdot f^*(S)=f_*(C)\cdot S,$$
and this gives
$\mathcal{N}_{S/Y}\cong\mathcal{O}_{\pr^1\times\pr^1}(-1,-1)$. 
In particular $\psi(S)$ is a curve of anticanonical degree $0$ in $Z$, so $Z$
is not Fano. 

\medskip

$(ii)\,\Rightarrow \,(iii)\ $
Suppose that $Z$ is not Fano. Observe that $Y$ and $Z$ may be singular, however
 they are factorial and $K_Y=\psi^*(K_Z)+S$. Then reasoning as in
 \cite[Proposition 3.4]{wisn} one gets $(iii)$.

\medskip

$(iii)\,\Rightarrow \,(i)\ $
The contraction of $\beta$ is birational with fibers of dimension at
most $1$; let $\ph_2\colon X\to W_2$ be the smooth blow-up given by
 Claim \ref{curry}. Then $E=\Exc(\ph_2)$, and
$\ph_2\neq\ph$.
If $C\subset E$ is a non trivial fiber of
$\ph_2$, then $K_X\cdot C=E\cdot C=1$, which yields $-K_{W}\cdot
\ph_*(C)=0$. Hence $W$ is not Fano.
\end{proof}
\begin{claim}
\label{pollock}
Suppose that $Y$ is Fano, and let $\beta_1$ and
$\beta_2$ be two distinct extremal rays of $\NE(Y)$ with
divisorial loci ${S}_i=\Lo(\beta_i)$. 
If $S_1\cap S_2\neq\emptyset$,
then the contraction $\psi\colon Y\to Z$ of one of the $\beta_i$'s is 
the blow-up of a smooth curve $C\subset Z_{reg}$,
and $Z$ is Fano.
\end{claim}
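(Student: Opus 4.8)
The plan is to reduce to the two divisorial contractions $\psi_i\colon Y\to Z_i$ contracting $\beta_i$ and to show that at least one of them is of type $(2,1)$ with Fano target. First I would observe that neither $\psi_i$ can be of fiber type, since its locus $S_i=\Lo(\beta_i)$ is a \emph{proper} prime divisor (a fiber type contraction has $\Lo=Y$), and neither can be small, because by the remark following Claim~\ref{curry} the variety $Y$ has no small elementary contractions. Hence each $\psi_i$ is divisorial, and by Remark~\ref{divisorial} we have $\Exc(\psi_i)=S_i$, so each $\psi_i$ is of type $(2,0)$ (contracting $S_i$ to a point) or of type $(2,1)$ (all fibers of dimension $\le 1$). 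They cannot both be of type $(2,0)$: since $S_1$ and $S_2$ are Cartier and distinct, $S_1\cap S_2$ has dimension $\ge 1$, so it contains a curve $\Gamma_0$; if both $\psi_i$ contracted $S_i$ to a point then $[\Gamma_0]$ would lie in both $\beta_1$ and $\beta_2$, forcing $\beta_1=\beta_2$, a contradiction. So, after relabelling, $\psi_1$ is of type $(2,1)$, and by Claim~\ref{curry} it is the blow-up of a smooth curve $C_1\subset(Z_1)_{reg}$.

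If $Z_1$ is Fano we are done with $\psi=\psi_1$. Otherwise I would apply Claim~\ref{basta}, which gives $S_1\cong\pr^1\times\pr^1$ with $\mathcal{N}_{S_1/Y}\cong\mathcal{O}_{\pr^1\times\pr^1}(-1,-1)$ and an extremal ray $\beta\neq\beta_1$ with $S_1\cdot\beta<0$. The next step is to identify $\beta$ with the image of the second ruling. Any curve $\ell$ with $[\ell]\in\beta$ satisfies $S_1\cdot\ell<0$, hence $\ell\subset S_1$; writing its class in $S_1$ as $pF_1+qF_2$ with $p,q\ge 0$ (where $F_1,F_2$ are the two rulings, $i_*[F_1]$ spanning $\beta_1$) and using extremality of $\beta$, the summand $p\,i_*[F_1]$ must again lie in $\beta$, forcing $p=0$ since $i_*[F_1]\in\beta_1\neq\beta$. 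Thus $\beta=\langle i_*[F_2]\rangle$, the two rulings map to $\beta_1$ and $\beta$, and \emph{every} curve contained in $S_1$ has class in the cone $\langle\beta_1,\beta\rangle$.

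With this in hand I would show $\psi_2$ is of type $(2,1)$. If instead $\psi_2$ were of type $(2,0)$, a component $\Gamma_0$ of $S_1\cap S_2\subset S_1$ would have $[\Gamma_0]\in\beta_2$ and simultaneously $[\Gamma_0]\in\langle\beta_1,\beta\rangle$; extremality of $\beta_2$ (which differs from both $\beta_1$ and $\beta$, as $\Lo(\beta_1)=\Lo(\beta)=S_1\neq S_2$) would force $[\Gamma_0]=0$, impossible. So $\psi_2$ is of type $(2,1)$, hence by Claim~\ref{curry} the blow-up of a smooth curve in $(Z_2)_{reg}$. It remains to prove $Z_2$ is Fano. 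Suppose not; then Claim~\ref{basta} yields $S_2\cong\pr^1\times\pr^1$ with $\mathcal{N}_{S_2/Y}\cong\mathcal{O}_{\pr^1\times\pr^1}(-1,-1)$. Consider the effective $1$-cycle $\Gamma:=S_1\cap S_2$ (effective since $S_2$ is Cartier and $S_1\not\subseteq S_2$). On the one hand, $\mathcal{O}_Y(S_2)|_{S_1}=\mathcal{O}_{S_1}(\Gamma)$, so $S_2\cdot\Gamma=\Gamma^2$ computed on $S_1\cong\pr^1\times\pr^1$; writing $\Gamma\equiv pF_1+qF_2$ with $p,q\ge 0$ gives $S_2\cdot\Gamma=2pq\ge 0$. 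On the other hand $\Gamma$ is a nonzero effective curve lying in $S_2$, so $S_2\cdot\Gamma=\deg\bigl(\mathcal{N}_{S_2/Y}|_{\Gamma}\bigr)<0$. This contradiction shows $Z_2$ is Fano, and $\psi:=\psi_2$ satisfies the statement.

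The main obstacle is the non-Fano case: the content lies in showing that failure of $Z_1$ to be Fano forces $S_1$ to be the $(-1,-1)$-quadric and then exploiting the curve $S_1\cap S_2$. The decisive point is the numerical clash — $S_1\cap S_2$ sits inside two quadrics with normal bundle $\mathcal{O}(-1,-1)$, yet its self-intersection inside $\pr^1\times\pr^1$ is nonnegative while its intersection with the other $(-1,-1)$-divisor is negative. Making the identification of $\beta$ with a ruling rigorous (via extremality) and checking $\beta_2\neq\beta_1,\beta$ are the places where care is needed, but they follow cleanly from the fact that the loci $\Lo(\beta_1)=\Lo(\beta)=S_1$ differ from $S_2$.
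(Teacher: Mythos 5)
Your proof is correct and takes essentially the same route as the paper's: both arguments rule out two type-$(2,0)$ contractions via a curve in $S_1\cap S_2$, reduce through Claims \ref{curry} and \ref{basta} to the case where $Z_1$ (and then, for contradiction, $Z_2$) is not Fano, and extract a numerical contradiction from the curve $S_1\cap S_2$. The only divergence is cosmetic: in the final clash the paper pairs an irreducible curve of $S_1\cap S_2$ against $S_1$, using that the rays $\beta_2,\widetilde{\beta}_2$ attached to $S_2$ cannot lie in $\N(S_1,Y)$, while you pair the cycle $S_1\cap S_2$ against $S_2$, playing off its non-negative self-intersection on $S_1\cong\pr^1\times\pr^1$ against the $(-1,-1)$ normal bundle of $S_2$ — equivalent computations resting on the same input from Claim \ref{basta}.
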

\begin{proof}
Let $\psi_i\colon Y\to Z_i$ be the contraction of $\beta_i$.
Since $S_1\cap S_2\neq\emptyset$ and $Y$ is factorial, we have
$\dim S_1\cap S_2=1$. Thus
$\psi_1$ and $\psi_2$
can not be both of type (2,0). 

So let's assume that $\psi_1$ is of type (2,1). If $Z_1$ is Fano, we
are done. If $Z_1$ is not Fano, Claim~\ref{basta} yields the existence
of a second
extremal ray $\widetilde{\beta}_1$
of $\NE(Y)$, distinct from $\beta_1$, with $\widetilde{\beta}_1\cdot
{S_1}<0$. Then $\N(S_1,Y)$ is generated by $\beta_1$ and
$\widetilde{\beta}_1$, and no
other extremal ray of $\NE(Y)$ can be contained in $\N(S_1,Y)$. 

Thus $\psi_2$ can not be of type (2,0). 
Let's show that $Z_2$ must be Fano. If not,
by Claim \ref{basta} 
there is another
extremal ray $\widetilde{\beta}_2$
of $\NE(Y)$, distinct from $\beta_2$, with $\widetilde{\beta}_2\cdot
{S_2}<0$. Since $\beta_2$ and $\widetilde{\beta}_2$ are not contained
in $\N(S_1,Y)$, we have $\beta_2\cdot S_1\geq 0$ and
$\widetilde{\beta}_2\cdot S_1\geq 0$.

Now if
$C\subset S_1\cap{S}_2$ is an irreducible curve, we get $C\cdot S_1<0$ because
$\beta_1\cdot S_1<0$ and $\widetilde{\beta}_1\cdot S_1<0$. On the
other hand $C\cdot 
S_1\geq 0$ because ${\beta}_2\cdot S_1\geq 0$ and
$\widetilde{\beta}_2\cdot S_1\geq 0$. Thus we have a contradiction.
\end{proof}
\begin{claim} \label{2,1fano}
In the setting of Claim \ref{curry}, suppose that $Y$ is Fano.
Then
 we are in one of the following situations:
\begin{enumerate}[$(i)$]
\item $W$ and $Z$ are Fano;
\item $\rho_Y\geq 3$ and
$Y$ has another elementary contraction $\widetilde{\psi}$
as in Claim \ref{curry}, such that the corresponding $\widetilde{W}$,
  $\widetilde{Z}$
are Fano;
\item $\rho_Y=3$ and $Y$ has an elementary contraction of type (3,2).
\end{enumerate}
\end{claim}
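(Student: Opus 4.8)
The plan is to analyze the divisorial contraction $\psi\colon Y\to Z$ of Claim~\ref{curry} together with the other extremal rays of $\NE(Y)$, using Claim~\ref{basta} as the main dichotomy. First I would note that if both $W$ and $Z$ are Fano, we are in case~$(i)$ and there is nothing to prove, so I assume $Z$ (equivalently $W$) is not Fano. By Claim~\ref{basta}, this forces $S:=\Exc(\psi)\cong\pr^1\times\pr^1$ with normal bundle $\ol_{\pr^1\times\pr^1}(-1,-1)$, and there exists a second extremal ray $\beta$ of $\NE(Y)$, distinct from $\NE(\psi)$, with $S\cdot\beta<0$. In particular $\Lo(\beta)\subseteq S$, since $S$ is the only divisor meeting $\beta$ negatively; this means $\beta$ has divisorial locus contained in $S$, and $\N(S,Y)$ is spanned by $\NE(\psi)$ and $\beta$, so $\rho_Y\geq 2$.

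Next I would consider the contraction $\widetilde\psi\colon Y\to\widetilde Z$ of this second ray $\beta$. Because $Y$ is Fano and factorial with only isolated canonical singularities (Lemma~\ref{target}), and because $\psi$ was shown to have no small contractions (the remark after Claim~\ref{curry}), $\widetilde\psi$ is either birational with fibers of dimension at most $1$ (the setting of Claim~\ref{curry}), or of type $(2,0)$, or of type $(3,2)$, or of fiber type onto a curve. The key observation is that $\Exc(\widetilde\psi)$ and $S$ both lie over the one-dimensional set $\psi(S)$ and meet each other, so Claim~\ref{pollock} applies to the pair $S_1=S$, $S_2=\Lo(\beta)$ provided both rays are divisorial. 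I would use Claim~\ref{pollock} to conclude that the contraction of (at least) one of $\NE(\psi)$, $\beta$ is the blow-up of a smooth curve onto a Fano target. Since we are assuming $Z$ is \emph{not} Fano, it must be the contraction $\widetilde\psi$ of $\beta$ that is the good one: $\widetilde\psi$ is the blow-up of a smooth curve in $\widetilde Z_{reg}$ with $\widetilde Z$ Fano, which is exactly case~$(ii)$ once we check via Claim~\ref{curry} that the lifted $\widetilde W$ is Fano too (by Claim~\ref{basta} applied to $\widetilde\psi$, $\widetilde Z$ Fano $\Rightarrow$ $\widetilde W$ Fano).

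The remaining possibility to rule out or place is when the second ray $\beta$ is \emph{not} of type $(2,1)$, i.e.\ when $\Lo(\beta)$ is divisorial but $\widetilde\psi$ has a two-dimensional fiber — this is exactly a contraction of type $(3,2)$ — or when $\beta$ is of type $(2,0)$. If $\beta$ is of type $(3,2)$, then $Y$ has an elementary contraction of type $(3,2)$, and I would invoke Claim~\ref{(3,2)}: this forces either $\rho_Y\leq 3$ directly, or a product structure. Combined with $S\cdot\beta<0$ forcing $\Lo(\beta)\subseteq S$, the type $(3,2)$ case is incompatible with $\Lo(\beta)$ lying inside the surface $S$ (a $(3,2)$ ray has divisorial, not lower-dimensional, locus dominating a surface), so I would show this lands us in case~$(iii)$ with $\rho_Y=3$ by a dimension count on $\N(S,Y)$ and $\N(\Exc(\beta),Y)$. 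If $\beta$ is of type $(2,0)$, then Claim~\ref{(2,0)} applies and gives $\rho_Y\leq 3$, and one traces through the two subcases there to land in $(ii)$ or $(iii)$.

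The main obstacle I expect is the bookkeeping of which ray yields the Fano target under Claim~\ref{pollock} and pinning down $\rho_Y=3$ in case~$(iii)$. Concretely, the delicate point is verifying that when $Z$ is not Fano, the second ray $\beta$ cannot itself lead to another non-Fano blow-up target without producing a contradiction; this is precisely where the negativity $S\cdot\beta<0$, the surface $S\cong\pr^1\times\pr^1$, and the constraint $\dim\N(S,Y)=2$ interact, and it parallels the contradiction derived at the end of Claim~\ref{pollock}. I would organize the argument so that the Fano/non-Fano alternative of Claim~\ref{basta} is applied symmetrically to both $\psi$ and $\widetilde\psi$, and use the incompatibility of two negative divisors meeting along a curve to force the desired Fano conclusion, thereby separating cases~$(ii)$ and~$(iii)$ according to whether the second good ray is of blow-up type or of type $(3,2)$.
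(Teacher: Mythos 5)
There is a genuine gap, and it sits exactly at the point you flag as ``delicate''. After Claim \ref{basta} gives the second extremal ray $\beta$ with $S\cdot\beta<0$, you contract $\beta$ itself and try to apply Claim \ref{pollock} to the pair $\NE(\psi)$, $\beta$. But $S\cdot\beta<0$ forces $\Lo(\beta)\subseteq S$, and since $Y$ has no small elementary contractions (the remark following Claim \ref{curry}) and $S$ is irreducible, the contraction of $\beta$ is again a contraction of type $(2,1)$ whose exceptional divisor is the \emph{same} divisor $S$: it contracts the other ruling of $\pr^1\times\pr^1$. Claim \ref{pollock} cannot be applied to two rays with the same divisorial locus: its proof starts from $\dim(S_1\cap S_2)=1$ and crucially uses that $\beta_2$ and $\widetilde{\beta}_2$ are \emph{not} contained in $\N(S_1,Y)$, both of which fail when $S_1=S_2$. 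Worse, the conclusion you extract is false: applying Claim \ref{basta} to the contraction of $\beta$ (whose exceptional divisor is $S\cong\pr^1\times\pr^1$ with normal bundle $\mathcal{O}_{\pr^1\times\pr^1}(-1,-1)$, and for which $\NE(\psi)$ is a second extremal ray negative on $S$), condition $(iii)$ of that claim holds, hence its target is \emph{not} Fano. The non-Fano alternative of Claim \ref{basta} is symmetric in the two rulings of $S$, so the second ray can never be ``the good one'', and the contradiction you hope to derive does not exist. For the same reason your case analysis for $\beta$ of type $(3,2)$ or $(2,0)$ is vacuous: $\Lo(\beta)\subseteq S$ already forces type $(2,1)$.

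The missing idea (and the paper's actual argument) is to bring in a \emph{third} extremal ray: since $S$ is effective, there exists an extremal ray $\widetilde{\beta}$ of $\NE(Y)$ with $S\cdot\widetilde{\beta}>0$, necessarily distinct from $\NE(\psi)$ and $\beta$. Three distinct extremal rays of $\NE(Y)$ are linearly independent, so $\rho_Y\geq 3$ and $\widetilde{\beta}\cap\N(S,Y)=\{0\}$; hence the contraction $\widetilde{\psi}$ of $\widetilde{\beta}$ is finite on $S$, and since $S$ meets every non-trivial fiber of $\widetilde{\psi}$, those fibers have dimension at most $1$. If $\widetilde{\psi}$ is of fiber type it is of type $(3,2)$, and $(\widetilde{\psi}_*)_{|\N(S,Y)}$ is an isomorphism onto $\N(\widetilde{Z})$, which forces $\rho_{\widetilde{Z}}=2$, $\rho_Y=3$, i.e.\ case $(iii)$. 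If $\widetilde{\psi}$ is birational, its exceptional divisor is distinct from $S$ (as $\widetilde{\psi}$ is finite on $S$) and meets $S$, so Claim \ref{pollock} now applies legitimately; since $Z$ is not Fano, the Fano target it produces must be $\widetilde{Z}$, and Claim \ref{basta} applied to $\widetilde{\psi}$ then makes $\widetilde{W}$ Fano as well, giving case $(ii)$.
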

\begin{proof}
Let's assume 
that we are not in $(i)$, so $W$ and $Z$ are not Fano.
By Claim~\ref{basta} we have $S\cong\pr^1\times\pr^1$, and there is a
second extremal ray $\beta$ of $\NE(Y)$ with $\beta\cdot S<0$.

There exists an extremal ray $\widetilde{\beta}$ of $\NE(Y)$ such that
$S\cdot \widetilde{\beta}>0$, let $\widetilde{\psi}\colon Y\to
\widetilde{Z}$ be its contraction.

Clearly $\widetilde{\beta}$ is distinct from $\NE(\psi)$ and $\beta$,
and the elements of three distinct extremal rays must by linearly
independent in $\mathcal{N}_1(Y)$. Hence $\rho_Y\geq 3$ and
$\widetilde{\beta}\cap\N(S,Y)=\{0\}$, which implies that
$\widetilde{\psi}$ is finite on $S$. On the other hand $S$ must
intersect every non trivial fiber of $\widetilde{\psi}$, hence
$\widetilde{\psi}$ has fibers of dimension at most $1$.

If $\widetilde{\psi}$ is of fiber type, then it is of type (3,2).
Let's consider
$$\bigl(\widetilde{\psi}_*\bigr)_{|\N(S,Y)}\colon\N(S,Y)\la\N(\widetilde{Z}).$$
Since $\ker\widetilde{\psi}_*$ is the line spanned by $\widetilde{\beta}$ in
$\N(Y)$, we have $\ker\widetilde{\psi}_*\cap\N(S,Y)=\{0\}$ and
$(\widetilde{\psi}_*)_{|\N(S,Y)}$ is injective. On the other hand
$\widetilde{\psi}(S)=\widetilde{Z}$, hence
$(\widetilde{\psi}_*)_{|\N(S,Y)}$ is surjective too. This gives
$\rho_{\widetilde{Z}}=2$ and $\rho_Y=3$, and thus $(iii)$.

Suppose that  $\widetilde{\psi}$ is birational. Then 
$\Exc(\widetilde{\psi})\cap\Exc(\psi)\neq\emptyset$, and
Claim \ref{pollock} implies that $\widetilde{Z}$ is Fano. Finally also 
$\widetilde{W}$ is Fano by Claim \ref{basta}, and we get $(ii)$.
\end{proof}
\begin{claim}
\label{m}
Let $\pi\colon Y\to T$ be a contraction onto a surface.
Let $\alpha_1,\dotsc,\alpha_m$ be the  extremal rays of $\NE(\pi)$
and  $\psi_i\colon Y\to Z_i$ the contraction of $\alpha_i$.
Then 
$\Exc(\psi_i)\cap\Exc(\psi_j)\neq\emptyset$ for some $i\neq j$.
\end{claim}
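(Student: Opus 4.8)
The plan is to argue by contradiction, assuming that the divisors $\Exc(\psi_i)$ are pairwise disjoint, and to derive a contradiction by intersecting them with a general fiber of $\pi$. Throughout one may assume $m\geq 2$, since otherwise there is no pair to consider.

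First I would dispose of the fiber type rays. If some $\psi_i$ is of fiber type, then $\psi_i$ is nowhere an isomorphism, so $\Exc(\psi_i)=Y$; since $m\geq 2$ and every $\psi_j$ is non trivial, $\Exc(\psi_i)\cap\Exc(\psi_j)=\Exc(\psi_j)\neq\emptyset$ for any $j\neq i$, and we are done. Hence I may assume that every $\psi_i$ is birational. Recall that $Y$ is factorial with canonical singularities by Lemma \ref{target}, and that $Y$ has no small elementary contractions by the remark following Claim \ref{curry}; therefore each $\psi_i$ is divisorial. Writing $S_i:=\Exc(\psi_i)$, Remark \ref{divisorial} gives that $S_i$ is a prime divisor and $S_i\cdot\alpha_i<0$. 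Moreover every irreducible curve with class in $\alpha_i$ is contained in $S_i$, and since $\alpha_i$ is an extremal ray of $\NE(Y)$, Lemma \ref{easy} provides a rational curve $C_i\subseteq S_i$ with $[C_i]\in\alpha_i$; fix a generator $\lambda_i$ of $\alpha_i$.

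Next I would choose the test cycle. Let $Z=\pi^{-1}(t)$ for $t\in T$ general; since $\dim Y=3$ and $\dim T=2$, this is an effective $1$-cycle with $0\neq[Z]\in\NE(\pi)=\alpha_1+\cdots+\alpha_m$, so I can write $[Z]=\sum_i d_i\lambda_i$ with $d_i\in\Q_{\geq 0}$. The key geometric input is that, for $t$ general, no irreducible component of $Z$ is contained in any $S_j$: indeed if $\pi(S_j)\subsetneq T$ then the general fiber misses $S_j$, while if $\pi(S_j)=T$ then $\pi_{|S_j}$ is generically finite, so the general fiber meets $S_j$ in finitely many points. As each $S_j$ is Cartier ($Y$ being factorial), this yields $Z\cdot S_j\geq 0$ for every $j$.

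Finally I would run the intersection computation. Under the disjointness assumption, for $i\neq j$ the curve $C_i\subseteq S_i$ is disjoint from $S_j$, so $\lambda_i\cdot S_j=0$; combined with $\lambda_j\cdot S_j<0$ this gives
$$Z\cdot S_j=\sum_i d_i(\lambda_i\cdot S_j)=d_j(\lambda_j\cdot S_j)\leq 0.$$
Comparing with $Z\cdot S_j\geq 0$ forces $d_j=0$, for every $j$, whence $[Z]=0$, contradicting $[Z]\neq 0$. I expect the only delicate point to be the verification that a general fiber of $\pi$ has no component inside any $S_j$, via the dichotomy according to whether $S_j$ dominates $T$; everything else follows directly from the factoriality of $Y$, Remark \ref{divisorial}, and the structure of $\NE(\pi)$ described in Lemma \ref{easy}.
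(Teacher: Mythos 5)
Your proof is correct and follows essentially the same route as the paper's: assume the $\Exc(\psi_i)$ are pairwise disjoint, reduce to the case where all $\psi_i$ are divisorial with $S_i\cdot\alpha_i<0$ and $S_i\cdot\alpha_j=0$, then intersect a contracted $1$-cycle avoiding $S_1\cup\dots\cup S_m$ with each $S_j$ to force all coefficients in its decomposition to vanish. The only cosmetic difference is that you use the full general fiber of $\pi$ as the test cycle (with a careful check that no component lies in any $S_j$), whereas the paper simply picks one irreducible contracted curve not contained in the union.
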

\begin{proof}
Set $S_i:=\Exc(\psi_i)$, and 
assume by contradiction that  $S_i\cap S_j=\emptyset$ for every $i\neq
j$. Then each $\psi_i$ is birational and 
$S_i\cdot\alpha_i<0$ for every $i$
by Remark \ref{divisorial}; moreover $S_i\cdot\alpha_j=0$ for every
$i\neq j$. 

Let $C\subset Y$ be an irreducible curve which is contracted by $\pi$
but not contained in $S_1\cup\cdots\cup S_m$. Then 
$$C\equiv\sum_{i=1}^m \lambda_i l_i\ \text{ where $[l_i]\in\alpha_i$
 and $\lambda_i\in\Q_{\,\geq 0}$ for every $i=1,\dotsc,m$.}$$ 
So $0\leq C\cdot S_i=\lambda_i(l_i\cdot S_i)$ implies $\lambda_i=0$
for every $i=1,\dotsc,m$, a contradiction.
\end{proof}
\begin{proof}[Proof of Theorem \ref{result} $(ii)$]
Let's suppose that 
 $\rho_Y\geq 4$. By Claims \ref{(3,2)}, \ref{(2,0)}, and 
\ref{curry}, 
 any elementary contraction of $Y$
is either a smooth blow-up or a $\pr^1$-bundle. This implies that $Y$
is Fano (but possibly singular) by Remark \ref{fano}.

 Suppose now that $Y$ is smooth and that $\rho_Y\geq 6$. Then $Y\cong
S\times\pr^1$ with $S$ a Del Pezzo surface by \cite[Theorem
2]{morimukai}. Thus $X\to S$ is a quasi elementary contraction
(see example \ref{compo}), and
applying Theorem \ref{result} $(i)$ 
as in the proof of Claim \ref{(3,2)} we easily get the
statement.

Thus we are left to prove that if $\rho_Y\geq 4$, then $Y$ must be
smooth.

\medskip

By contradiction, assume that $Y$ is singular and $\rho_Y\geq 4$. By
Claims \ref{(3,2)}, \ref{(2,0)}, and \ref{2,1fano}, we
can construct a sequence 
$$\xymatrix{
{X=X_{\rho_Y}} \ar[d]_f \ar[r]
& {X_{\rho_Y-1}}
\ar[r]
\ar[d]
& {\cdots}\ar[r]& {X_4} \ar[d]^{f_4}
\\
{Y=Y_{\rho_Y}} \ar[r]^{\psi_{\rho_Y}} & {Y_{\rho_Y-1}}
\ar[r]
& {\cdots}\ar[r]^{\psi_5}& {Y_4} 
}$$
where for every $i=4,\dotsc,\rho_Y$ 
$\,X_i$ is smooth and Fano, $f_i$ is a quasi elementary
contraction, $Y_i$ is Fano with $\rho_{Y_i}=i$, and $\psi_i$ is the blow-up of a
smooth curve contained in the smooth locus of $Y_{i-1}$. In particular
each $Y_i$ is singular, because $Y$ is.

Since $Y_4$ is singular and $\rho_{Y_4}=4$, 
Claim \ref{(3,2)} implies that $Y_4$ has no
elementary contraction of fiber type.
\begin{claim}
\label{final}
$Y_4$ has no contraction onto a surface.
\end{claim}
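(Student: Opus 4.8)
The plan is to argue by contradiction: suppose $\pi\colon Y_4\to T$ is a contraction onto a surface $T$. Since $Y_4$ is Fano, $\pi$ is a Mori contraction of fiber type with $1$-dimensional general fibers. First I would rule out that $\pi$ is elementary. An elementary contraction of fiber type onto a surface is of type $(3,2)$, and by Claim \ref{(3,2)} the value $\rho_{Y_4}=4$ forces case $(iii)$, so that $Y_4\cong Z\times\pr^1$ would be smooth, against our standing assumption that $Y_4$ is singular. Hence $\pi$ is not elementary, $\dim\NE(\pi)=\rho_{Y_4}-\rho_T\geq 2$, $\rho_T\leq 2$, and $\NE(\pi)$ has at least two extremal rays.

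Next I would record that every extremal ray $\alpha_i$ of the face $\NE(\pi)$ is divisorial. Indeed $Y_4$ has no elementary contraction of fiber type: types $(3,0)$ and $(3,1)$ give $\rho_{Y_4}=1$ and $\rho_{Y_4}=2$ respectively, and type $(3,2)$ is excluded as above; moreover $Y_4$ has no small contraction by the remark following Claim \ref{curry}. So each $\psi_i\colon Y_4\to Z_i$ (the contraction of $\alpha_i$) is of type $(2,0)$ or $(2,1)$, with $S_i:=\Exc(\psi_i)=\Lo(\alpha_i)$. Applying Claim \ref{m} to $\pi$ produces indices $i\neq j$ with $S_i\cap S_j\neq\emptyset$, and then Claim \ref{pollock} shows that one of them, say $\psi_1\colon Y_4\to Z_1$, is the blow-up of a smooth curve contained in $(Z_1)_{reg}$, with $Z_1$ Fano. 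Since $\alpha_1\subseteq\NE(\pi)$, the contraction $\pi$ factors as $\pi=\pi'\circ\psi_1$ with $\pi'\colon Z_1\to T$ again onto the surface $T$. Because $\psi_1$ blows up a smooth curve in the smooth locus, $Z_1$ inherits the isolated singularities of $Y_4$, so $Z_1$ is a singular factorial Fano $3$-fold with $\rho_{Z_1}=3$; moreover Claim \ref{curry} and Claim \ref{basta} give a quasi elementary contraction $g_1\colon W_1\to Z_1$ with $W_1$ smooth and Fano. This reduces the statement to excluding a contraction onto a surface on a \emph{singular} Fano $3$-fold of Picard number $3$.

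Finally I would analyse $\pi'\colon Z_1\to T$, splitting on $\rho_T$. If $\rho_T=2$ then $\pi'$ is elementary of type $(3,2)$, and Claim \ref{(3,2)} applied through $g_1$ (with $\rho_{Z_1}=3$) lands in case $(ii)$, giving $T\cong\pr^1\times\pr^1$ or $T\cong\mathbb{F}_1$ with $\pi'$ equidimensional; if $\rho_T=1$ then $T\cong\pr^2$ by Lemma \ref{easy}, $\pi'$ is again non elementary, and the blow-down step can in principle be repeated. The hard part will be closing these Picard-number-$3$ cases: the clean dichotomy used for $Y_4$ — that a singular Fano $3$-fold with $\rho\geq 4$ has no elementary contraction of fiber type — is no longer available, since Claim \ref{(3,2)}$(ii)$ genuinely permits a singular Fano $3$-fold to carry an elementary conic bundle onto $\pr^1\times\pr^1$ or $\mathbb{F}_1$, so the purely cone-theoretic descent stops and re-establishing ``all rays divisorial'' on $Z_1$ may fail. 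To finish I expect to pass to a smoothing of the singular Fano $3$-fold, via the results behind Lemma \ref{conodiMori}: these deform $Z_1$ to a smooth Fano $3$-fold and relate the two cones of curves, which should let one transport the conic bundle structure to the smooth deformation; the resulting smooth Fano $3$-fold carrying such a fibration with the prescribed Picard number is then ruled out by the Mori--Mukai classification, yielding the desired contradiction.
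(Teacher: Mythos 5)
Your reduction is exactly the paper's: you rule out elementary contractions of fiber type on $Y_4$ via Claim \ref{(3,2)}, observe that all extremal rays of $\NE(\pi)$ are divisorial (no small rays by the remark after Claim \ref{curry}), use Claims \ref{m} and \ref{pollock} to blow down one ray to a singular factorial Fano threefold $Z_1$ (called $Y_3$ in the paper) with $\rho=3$, and split on $\rho_T$. But your proof stops exactly where the real work begins, and the finish you propose does not work, for two independent reasons. First, the smoothing machinery (Namikawa's theorem \cite{namikawa} behind Lemma \ref{conodiMori}) requires \emph{terminal} singularities, whereas at this stage you only know that $Z_1$ has canonical singularities; in the paper, terminality of the $Y_i$'s is proved \emph{after} Claim \ref{final}, via Lemma \ref{canonical} and the blow-down chain to $Y_1$, whose very construction uses Claim \ref{final} — so invoking the smoothing inside the proof of the Claim is circular. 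Second, and more fundamentally, your use of the classification runs backwards: a smooth Fano $3$-fold with $\rho=3$ (or $\rho=2$) carrying a conic bundle structure is in no way excluded by Mori--Mukai — such varieties are plentiful ($\pr^1\times\pr^1\times\pr^1$, for instance). In the paper the classification is used in the opposite direction, and only after Claim \ref{final} is established: the smoothing $Y_t$ of $Y_4$ has $\rho_{Y_t}=4$, the classification \emph{guarantees} that $Y_t$ has a conic bundle structure, and Lemma \ref{conodiMori} descends it to a contraction of $Y_4$ onto a surface, contradicting the Claim. Transporting your conic bundle on $Z_1$ to a smoothing therefore produces no contradiction at all.

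The actual contradiction in the Picard-number-$3$ and $2$ cases comes from a delicate interplay between the singularities and the Fano condition on the blow-up. Since $Z_1$ is singular, the conic bundle $\pi'\colon Z_1\to T$ has non-empty discriminant locus; on the other hand the blown-up curve $C_3\subset (Z_1)_{reg}$ cannot meet any curve of anticanonical degree $1$ by Remark \ref{wellknown}, hence cannot meet any singular fiber of $\pi'$, nor be a component of one; and it cannot be an irreducible fiber either, since then $Y_4$ would acquire an elementary contraction onto the blow-up of $T$ at a point, which is excluded. For $T\cong\pr^1\times\pr^1$ this forces the discriminant to be a disjoint union of smooth rational curves lying in fibers of a ruling, impossible by \cite[Lemma 5.3]{prok}; for $T\cong\mathbb{F}_1$, and likewise for $\rho_T=1$, the paper blows down further to a singular $Y_2$ with $\rho_{Y_2}=2$ and runs a longer analysis of its two elementary contractions (type (3,2), (3,1), and birational each lead to contradictions, always closed by Remark \ref{wellknown}). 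None of this case analysis is replaceable by the smoothing argument you sketch, so the proposal has a genuine gap at its core.
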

\begin{proof}
By contradiction let $\pi\colon Y_4\to T$ be such a contraction. 
Then $\pi$ can not be elementary, so $\rho_T\leq 2$.

The cone $\NE(\pi)$ contains $m\geq 2$ extremal rays, whose contractions
are birational. Call $S_1,\dotsc,S_m$ their exceptional loci.
By Claim \ref{m} they can not be all disjoint, so we can assume that $S_1\cap
S_2\neq \emptyset$. 

Using Claim \ref{pollock}, we can assume that the elementary
contraction $\psi_4\colon Y_4\to Y_3$ with exceptional locus $S_1$ 
is the blow-up of a smooth curve $C_3\subset(Y_3)_{reg}$, 
and that $Y_3$ is Fano. We observe that $Y_3$ is singular because
$Y_4$ is.
As before we get
$$\xymatrix{ {X_4} \ar[d]_{f_4}\ar[r]&
{X_3}\ar[d]^{f_3}\\
{Y_4} \ar[r]^{\psi_4} & {Y_3}
}$$
where $X_3$ is smooth and Fano and $f_3$ is
a quasi elementary contraction.
Moreover
$\pi$ induces a contraction $\pi'\colon Y_3\to T$.

If $\pi'$ is elementary, then 
$\rho_T=2$, $\pi'$ is equidimensional, and 
 $T$ is either $\pr^1\times\pr^1$ or
$\mathbb{F}_1$ by Claim~\ref{(3,2)}.
Hence $\pi'$ is a conic bundle (see \cite[\S 1]{sarkisov}
for the definition and properties of conic bundles).
Let $\Delta_{\pi'}\subset T$ be
the discriminant locus of $\pi'$, which is non empty because $Y_3$ is
singular. 

We recall that
 $C_3\subset Y_3$ can
not intersect any curve of anticanonical degree $1$ by Remark
\ref{wellknown}.
Thus $C_3$ can not be a component of
a reducible fiber of $\pi'$.

If $C_3$ is an irreducible fiber, let
$p=\pi'(C_3)\in T$ and call $T'$ the blow-up of $T$ in $p$: then $Y_4$
has an elementary contraction onto $T'$, which
is impossible.

Hence $\pi'(C_3)$ is a curve in $T$, and
it is disjoint from
$\Delta_{\pi'}$ because
$C_3$ can not
intersect singular fibers of $\pi'$. 

If $T\cong\pr^1\times\pr^1$,
the only possibility is that
$\Delta_{\pi'}$ is a union of fibers of a projection
$\pr^1\times\pr^1\to \pr^1$, hence a disjoint union of 
smooth rational curves. But this is impossible, see for instance
\cite[Lemma 5.3]{prok}.

Consider now the case where $T\cong\mathbb{F}_1$. 
Then we have a contraction $Y_3\to\mathbb{F}_1\to\pr^2$, which has a
second factorization $Y_3\to Y_2\to\pr^2$. It is not difficult to see
that $\psi_3 \colon
Y_3\to Y_2$ is birational with fibers of dimension $\leq 1$, so by
Claim \ref{curry} it is
again a blow-up of a smooth curve $C_2\subset(Y_2)_{reg}$, and we have
a diagram:
$$\xymatrix{ {X_3} \ar[d]_{f_3}\ar[r]&
{X_2}\ar[d]^{f_2}&\\
{Y_3} \ar[r]^{\psi_3} & {Y_2} \ar[r]^{\xi}& {\pr^2}
}$$
where $X_2$ is smooth, $Y_2$ is singular with $\rho_{Y_2}=2$, 
and $f_2$ is a quasi elementary Mori
contraction.

The only curve in $Y_2$ which could have non positive anticanonical
degree is $C_2$. Thus $\xi\colon Y_2\to \pr^2$ is a conic bundle 
with non empty
 discriminant locus $\Delta_{\xi}\subset\pr^2$.
If  $\xi(C_2)$ is a curve in $\pr^2$, then
$C_2$ must intersect some singular fiber,
which is again impossible by Remark \ref{wellknown}.
Thus $C_2$ is a smooth fiber of $\xi$, $Y_2$ is Fano, and $X_2$ is
Fano too by Claim \ref{basta}.

Let's consider the other elementary contraction $\eta\colon Y_2\to Z$
of $Y_2$.

If $\eta$ is again of type (3,2), reasoning as above we get that
$\eta$ must contract $C_2$, a contradiction because
$\NE(\eta)\cap\NE(\xi)=\{0\}$.

If $\eta$ is of type (3,1), it is a fibration in Del Pezzo
surfaces over $\pr^1$. 
Then $Y_2$ is a finite cover of $\pr^1\times\pr^2$, and $C_2$ is the
inverse image of
$\pr^1\times\{pt\}$. This implies that 
$Y_3$ is a finite cover of
$\pr^1\times\mathbb{F}_1$, which gives a surjective morphism 
$Y_3\to\pr^1\times\pr^1$. Taking the Stein factorization we get
an elementary contraction $Y_3\to T'$
of type (3,2), where $T'$ is a
finite cover of $\pr^1\times\pr^1$. By Claim \ref{(3,2)}
$T'$ can be $\pr^1\times\pr^1$ or $\mathbb{F}_1$, but
$T'$ has two distinct
fibrations, thus
$T'\cong\pr^1\times\pr^1$. We have already excluded this
possibility.

Therefore $\eta$ is birational, let $E$ be its exceptional
divisor. Since $E\cdot\NE(\eta)<0$ by Remark \ref{divisorial}, it must
be $E\cdot\NE(\xi)>0$, hence $\xi(E)=\pr^2$ and $E$ intersects
$C_2$. In particular $E$ can not be covered by curves of anticanonical
degree $1$, thus $\eta$ is of type (2,0). Moreover $E\cdot C_2\geq 2$,
because $\xi$ has singular fibers.

The composite
contraction $Y_3\to Z$ has a second factorization
$$Y_3\stackrel{\sigma}{\la}\widetilde{Y}_2\stackrel{\chi}{\la} Z.$$ 
It is not difficult to see that
$\sigma$ has
exceptional locus $\psi_3^*(E)$ and 
is the blow-up of a smooth curve
$\widetilde{C}_2\subset(\widetilde{Y}_2)_{reg}$.
Since the image of $\psi_3^*(E)$ in $Z$ is $z_0:=\eta(E)$, 
$\widetilde{C}_2$ is
contracted by $\chi$. Then we see that $\chi$ is again a Mori
contraction of type (2,1) with exceptional divisor
$\sigma(\Exc(\psi_3))$, and $\widetilde{Y}_2$ is Fano.
In particular $\chi$ is again a blow-up of a smooth curve contained in
$Z_{reg}$, so that $z_0$ is a smooth point. Moreover  $\psi_3^*(E)$ is
contained in the smooth locus of $Y_3$, so that $E\subset (Y_2)_{reg}$.

Therefore $\eta$ is just the blow-up of $z_0$ and $E\cong\pr^2$.
If $E$ intersects $C_2$ in at least
two distinct points, take $l$ a line in
$E$ through these two points. Let $\widetilde{l}$ be the proper
transform of $l$ in $Y_3$. Then $-K_{Y_2}\cdot l=2$ and
$\widetilde{l}\cdot \Exc(\psi_3)\geq 2$, so
 by Remark~\ref{wellknown} we get $-K_{Y_3}\cdot
\widetilde{l}\leq 0$, a contradiction.

If $E$ intersects $C_2$ in a single non reduced point $y_0$, similarly as
before take $l$ a line in
$E\cong\pr^2$ through $y_0$. Then the schematic intersection
$\psi_3^*(E)\cap \Exc(\psi_3)$ in non reduced along the fiber of
$\psi_3$ over $y_0$, thus again $\widetilde{l}\cdot \Exc(\psi_3)\geq 2$
gives a contradiction.
This concludes the case where $\rho_T=2$.

\medskip

We still have to exclude the case where
$\rho_T=1$ and
$Y_3$  has no elementary contractions of type (3,2).
Reasoning as for $Y_4$, we see that one of  
the two extremal rays of $\NE(\pi')$ yields a blow-up
$\psi_3\colon
Y_3\to Z_2$ of a smooth curve $C'\subset(Z_2)_{reg}$, and
$Z_2$ is Fano with $\rho_{Z_2}=2$. 
Now $\pi'$ yields an elementary contraction $\pi''\colon Z_2\to
T$. Claim \ref{(3,2)} gives 
$T\cong\pr^2$, and $\Delta_{\pi''}$ is
non empty. As before we easily get a contradiction.
\end{proof}
Using Claims \ref{final}, \ref{(2,0)}, and \ref{2,1fano}, we get a 
sequence
$$\xymatrix{
 {Y_4} \ar[r]^{\psi_4} & {Y_3} \ar[r]^{\psi_3} & {Y_2} \ar[r]^{\psi_2} & 
{Y_1} 
}$$
where each $Y_i$ Fano 
and each $\psi_{i}$ is the blow-up of a smooth curve
$C_{i-1}\subset (Y_{i-1})_{reg}$. In particular  $Y_1$
is factorial with isolated canonical singularities, singular, and
$\rho_{Y_1}=1$.

We first show that $Y_1$ must have terminal singularities, using the
following Lemma.
\begin{lemma}
\label{canonical}
Let $Z$ be a $3$-dimensional  $\Q$-factorial
projective variety, with isolated canonical
singularities and $K_Z$ Cartier. Suppose that $Z$ is Fano
with $\rho_Z=1$, and that the singularities of $Z$ are not terminal. 
Then 
one of the following occurs:
\begin{enumerate}[$(i)$]
\item 
$Z$ contains a $1$-dimensional family of curves of anticanonical
  degree $1$ passing through a singular point;
\item $Z$ is covered by a family 
of curves of anticanonical degree $\leq 2$ passing
  through a singular point. 
\end{enumerate}
\end{lemma}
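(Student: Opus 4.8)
The plan is to pass to a crepant terminalization of $Z$ and read off the required curves from a single extremal ray that meets the exceptional divisor; the whole point will be that the curves contained in that exceptional divisor are anticanonically trivial, which kills every high‑degree possibility.

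First I would take a $\Q$-factorial terminalization $g\colon Z'\to Z$, so that $Z'$ has terminal $\Q$-factorial singularities, $K_{Z'}=g^*K_Z$ (hence $K_{Z'}$ is Cartier and $-K_{Z'}=g^*(-K_Z)$ is nef and big), i.e.\ $Z'$ is a terminal Gorenstein weak Fano $3$-fold. Since the singularities of $Z$ are canonical, isolated, and not terminal, $g$ is not an isomorphism and there is a $g$-exceptional prime divisor $E$ with $g(E)=\{p\}$ a non‑terminal, hence singular, point of $Z$. The crucial property of $E$ is that \emph{every} curve contained in $E$ is contracted by $g$, so it has $-K_{Z'}$-degree $0$; dually, for any irreducible $C\subset Z'$ with $C\not\subset E$ one has $-K_{Z'}\cdot C=-K_Z\cdot g_*C$, and if $C$ meets $E$ then $g(C)$ is a curve through $p$ of that same anticanonical degree. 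Thus it suffices to produce on $Z'$ curves of small positive $-K_{Z'}$-degree meeting $E$. After replacing $Z'$ by a model extracting a single crepant divisor over $p$, I would arrange $\rho_{Z'}=2$.

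With $\rho_{Z'}=2$ the cone $\ov{\NE}(Z')$ has exactly two extremal rays: the ray $\NE(g)$ contracted by $g$, on which $-K_{Z'}$ vanishes and $E$ is negative, and a second ray $R$. As $-K_{Z'}$ is big it cannot vanish on the whole cone, so $-K_{Z'}\cdot R>0$; and since the proper transform of a general curve of $Z$ through $p$ meets $E$ positively while $E\cdot\NE(g)<0$, one gets $E\cdot R>0$. Let $\operatorname{cont}_R\colon Z'\to Z''$ be the contraction of $R$. Now the $-K_{Z'}$-triviality of curves in $E$ excludes all the bad types: if $\operatorname{cont}_R$ contracted a divisor $D$ to a point, or were a del Pezzo fibration, then $E\cap D$ (resp.\ $E\cap F$ for $F$ a general fibre) would be a curve lying in $E$, hence $-K_{Z'}$-trivial, yet lying on a surface on which $-K_{Z'}$ is ample, a contradiction; the case $\dim Z''=0$ is impossible since $\rho_{Z''}=\rho_{Z'}-1=1$. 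Only two possibilities survive. If $\operatorname{cont}_R$ is of fibre type it is a conic bundle: the conics have anticanonical degree $2$, cover $Z'$, and each meets $E$, so their images form a covering family of curves of anticanonical degree $\le 2$ through $p$, which is $(ii)$. If $\operatorname{cont}_R$ is birational it contracts a divisor onto a curve, and for a terminal Gorenstein $3$-fold this forces $l(R)=1$; the degree‑$1$ curves with class in $R$ sweep out a surface and all pass through $p$, giving a one‑dimensional family of anticanonical degree $1$ curves through the singular point $p$, which is $(i)$.

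The two places where real care is needed are, first, the reduction to $\rho_{Z'}=2$ — equivalently, if one keeps the full terminalization, the existence of a single extremal ray $R$ with both $-K_{Z'}\cdot R>0$ and $E\cdot R>0$ — which I would settle by extracting one crepant divisor over $p$ and running a relative minimal model program; and second, the classification input ($l(R)=1$ in the divisorial case, conic‑bundle structure in the fibre‑type case), for which I would invoke Mori's and Cutkosky's description of extremal contractions of terminal Gorenstein $3$-folds. The main obstacle is really this classification step together with confirming that the chosen extremal curves meet $E$; once it is in place, the anticanonical triviality of $E$ does the decisive work, eliminating the length‑$2$ divisorial ($\pr^2$-type) contractions and the del Pezzo fibrations, and the dichotomy $(i)$/$(ii)$ drops out.
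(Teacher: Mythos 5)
Your overall strategy (crepant partial resolution, then exploit the anticanonical triviality of curves in the exceptional divisor to analyze one Mori contraction) is the same as the paper's, but the step you yourself flag as delicate --- arranging $\rho_{Z'}=2$ --- is a genuine gap, and your proposed repair does not work. Extracting a single crepant divisor over $p$ gives a $\Q$-factorial Gorenstein variety with \emph{canonical}, in general non-terminal (and possibly non-isolated) singularities; and ``running a relative minimal model program'' afterwards is vacuous, because $K_{Z'}=g^*K_Z$ is numerically trivial over $Z$, so there are no relative $K$-negative extremal rays to contract. Thus you cannot have both $\rho_{Z'}=2$ and terminality. This matters because every classification input you invoke --- birational $\Rightarrow$ divisorial (you silently skip the small-contraction case altogether), fiber type over a surface $\Rightarrow$ conic bundle, and above all divisor-to-curve $\Rightarrow$ $l(R)=1$ --- is a theorem of Mori/Cutkosky about \emph{terminal} factorial Gorenstein $3$-folds. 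The length-one claim is the most serious casualty: conclusion $(i)$ requires degree exactly $1$, and the standard adjunction computation giving $-K_{Z'}\cdot l=1$ needs the general fiber $l$ of $\Exc(\operatorname{cont}_R)\to Z''$ to avoid $\Sing Z'$; but $l$ must meet $E\cap\Exc(\operatorname{cont}_R)$, which is exactly where a merely canonical $Z'$ can be singular (e.g.\ along a curve, as for the blow-up of the vertex of a cone over a singular del Pezzo surface). If instead you keep the full terminalization (so terminality and Cutkosky are available), then your argument for the existence of a ray $R$ with $-K_{Z'}\cdot R>0$ \emph{and} $E\cdot R>0$ collapses: it relies on the cone having only two rays, and with several crepant divisors over $p$ there genuinely exist $K$-trivial extremal rays on which $E$ is positive, so no easy positivity argument singles out the ray you need.

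The paper closes precisely this gap by a different, and simpler, device: it works with the full crepant terminalization $\psi\colon Z'\to Z$ and takes \emph{any} elementary Mori contraction $f$ of $Z'$ (one exists since $-K_{Z'}$ is nef and big but $K_{Z'}$ is not nef); finiteness of $f$ on $\Exc(\psi)$ handles the fiber-type case exactly as in your proposal, while in the birational case the required incidence $\Exc(f)\cap\Exc(\psi)\neq\emptyset$ is forced by $\rho_Z=1$: if the two were disjoint, $\psi(\Exc(f))$ would be an effective Cartier divisor on the Fano variety $Z$ that is negative on the curves contracted by $f$, hence non-nef, which is impossible when $\rho_Z=1$. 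That one observation replaces your entire reduction to Picard number two, and it keeps $Z'$ terminal so that Cutkosky's theorems (no small contractions, conic bundle structure, fiber degree $1$ in the $(2,1)$ case) apply legitimately. The remainder of your case analysis --- excluding divisor-to-point contractions and del Pezzo fibrations because $E$ would then contain a curve that is simultaneously $K$-trivial and $K$-negative --- is correct and parallels the paper, but it does not survive on its own without the missing existence/terminality step.
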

\noindent We postpone the proof of Lemma
\ref{canonical} and carry on with the proof of Theorem~\ref{result} $(ii)$. 

Suppose that $Y_1$ has at least one non-terminal singular point:
then Lemma~\ref{canonical} applies to $Y_1$. 
If $(i)$ holds, let $S\subset Y_1$ be a surface covered by curves of
anticanonical degree $1$. 
Since $\rho_{Y_1}=1$, $S$ is ample, and
$C_1\cap S\neq\emptyset$. Observe that even if $C_1\subset S$, $C_1$
does not contain any singular point, hence it can not be a member of
the family given by $(i)$. Thus $C_1$ intersects some curve of
anticanonical degree $1$,
which is impossible by Remark \ref{wellknown}.

Suppose now that $(ii)$ holds for $Y_1$. If $C_1$ is a component of some
reducible curve $l_1$ of the family,
it must be $l_1=C_1\cup C_1'$ with $-K_{Y_1}\cdot C_1=-K_{Y_1}\cdot
C_1'=1$, which gives again a contradiction. 
Again $C_1$ can not be a member of the family, because it does not
contain singular points.
Hence $C_1$ is not
contained in any member of the family; let $T$ be an irreducible
 surface containing $C_1$ such that through every point of $C_1$ there
 is a curve of 
anticanonical degree $\leq
2$ contained in $T$. Let $\widetilde{T}$ be the proper transform of
$T$ in $Y_2$.  Then through every point of $\widetilde{T}\cap
\Exc(\psi_2)$ there is a curve of anticanonical degree $1$
contained in
$\widetilde{T}$ 
(see Remark~\ref{wellknown}).

Consider $C_2\subset (Y_2)_{reg}$. If $\psi_2(C_2)$ is a point, then $C_2$
must intersect some curve of anticanonical degree $1$ contained in
$\widetilde{T}$. On the other hand if $\psi_2(C_2)$ is a curve, then
it must intersect $T$, thus $C_2$ must intersect
$\psi_2^{-1}(T)=\widetilde{T}\cup \Exc(\psi_2)$. In any case $C_2$ will
intersect some curve of anticanonical degree $1$, which gives a
contradiction.

\medskip

Hence $Y_1$ has terminal singularities, and the same holds for each
$Y_i$. Consider in particular~$Y_4$. By~\cite{namikawa} $\,Y_4$ has
a smoothing, that is 
an integral complex space $\mathcal{Y}$,
with a projective flat morphism $\mathcal{Y}\to\Delta$ onto the
complex unit
disc, such that $Y_4$ is the fiber over $0$ while the fiber
$Y_t$ over $t\neq 0$ is a smooth Fano threefold. 
It is proven in   \cite{smoothings} that 
$\Pic Y_4 \cong\Pic Y_t$,
in particular $\rho_{Y_t}=4$. 
Then we know by Mori and Mukai's classification
that $Y_t$ has a conic bundle structure (not
necessarily elementary), see
\cite[Theorem on p.\ 141]{fanoEMS}. Our goal is
 to deduce from this that $Y_4$ must have
 a contraction  onto a surface, contradicting Claim \ref{final}.

For this we need the following Lemma, 
 based on \cite{smoothings}.
\begin{lemma}
\label{conodiMori}
Let $Z$ be a $3$-dimensional factorial Fano variety with terminal singularities
and $\mathcal{Z}\to\Delta$ a smoothing. Consider the inclusions
$i_t\colon Z_t\hookrightarrow
\mathcal{Z}$ and $i_0\colon Z_0\hookrightarrow \mathcal{Z}$.

Then the push-forwards $(i_t)_*$ and $(i_0)_*$
induce bijections among the cones
$$\NE(Z_t),\quad \NE(\mathcal{Z}/\Delta),
\quad \text{and}\quad \NE(Z_0),$$
and every contraction of $Z_t$ or of $Z_0$ is the restriction of the
contraction of the corresponding face of $\NE(\mathcal{Z}/\Delta)$.
\end{lemma}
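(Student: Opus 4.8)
The plan is to transfer the entire cone structure between the fibres and the total space through the relative Mori theory of $\pi\colon\mathcal{Z}\to\Delta$. First I would record the geometric setup. Each fibre $Z_s$ is a $3$-dimensional terminal Gorenstein Fano threefold, and since $Z_s$ is the fibre of $\pi$ over a point of the smooth curve $\Delta$, adjunction gives $(K_{\mathcal{Z}})_{|Z_s}=K_{Z_s}$, so $-K_{\mathcal{Z}}$ is $\pi$-ample. As $\mathcal{Z}\to\Delta$ is a smoothing, the total space $\mathcal{Z}$ has terminal Gorenstein singularities and $K_{\mathcal{Z}}$ is Cartier. Hence the relative Cone and Contraction Theorems apply to $\pi$ (see \cite{KMM}): $\NE(\mathcal{Z}/\Delta)$ is a rational polyhedral cone, entirely $K_{\mathcal{Z}}$-negative, every extremal ray $R$ is contracted by a relative Mori contraction $\Phi_R\colon\mathcal{Z}\to\mathcal{W}_R$ over $\Delta$, and more generally every face $\tau$ of $\NE(\mathcal{Z}/\Delta)$ is contracted by a relative contraction $\Phi_\tau$ over $\Delta$.

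Next I would produce the linear isomorphisms. By \cite{smoothings} the restriction maps $\Pic\mathcal{Z}\to\Pic Z_0$ and $\Pic\mathcal{Z}\to\Pic Z_t$ are isomorphisms; in particular $\rho_{Z_0}=\rho_{Z_t}=\dim\N(\mathcal{Z}/\Delta)$. Because the fibres are Fano, numerical and linear equivalence agree on them, and a class in $\Pic\mathcal{Z}$ is relatively numerically trivial precisely when its restriction to a fibre is numerically trivial. By the projection formula $(i_s)_*$ is dual to restriction $\Pic\mathcal{Z}\to\Pic Z_s$, so the isomorphisms above dualize to linear isomorphisms $(i_0)_*\colon\N(Z_0)\xrightarrow{\sim}\N(\mathcal{Z}/\Delta)$ and $(i_t)_*\colon\N(Z_t)\xrightarrow{\sim}\N(\mathcal{Z}/\Delta)$. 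Pushing forward keeps effective classes effective, hence $(i_s)_*(\NE(Z_s))\subseteq\NE(\mathcal{Z}/\Delta)$ for $s=0,t$; the point is to upgrade these inclusions to equalities.

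The heart of the argument is to show that every relative extremal contraction $\Phi_R$ restricts to a \emph{nontrivial} Mori contraction on each of the two fibres $Z_0$ and $Z_t$. Fix a relatively nef $\Q$-divisor $D$ supporting $R$, i.e.\ with $D^{\perp}\cap\NE(\mathcal{Z}/\Delta)=R$; by relative base-point-freeness $\Phi_R$ is the morphism defined by $D$ over $\Delta$. Each $D_{|Z_s}$ is nef, and since $Z_s$ is Fano it is semiample and defines the contraction of the face $F_s:=(D_{|Z_s})^{\perp}\cap\NE(Z_s)$. By the projection formula a class $\gamma\in\NE(Z_s)$ lies in $F_s$ iff $D\cdot(i_s)_*\gamma=0$, i.e.\ iff $(i_s)_*\gamma\in R$, so $(i_s)_*(F_s)=R\cap(i_s)_*\NE(Z_s)$; as $R$ is a ray, $F_s\neq\{0\}$ forces $R\subseteq(i_s)_*\NE(Z_s)$. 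The key numerical remark is that every fibre is numerically trivial on relative curves: $Z_s\equiv\pi^{*}(\mathrm{pt})$ gives $Z_s\cdot C=0$ for every $C$ with $[C]\in\NE(\mathcal{Z}/\Delta)$, in particular $Z_s\cdot R=0$. If $\Phi_R$ is of fibre type then $\dim\mathcal{W}_R<4$ and $\Phi_R$ is nontrivial on every fibre; if $\Phi_R$ is divisorial with exceptional prime divisor $E$, then $E\cdot R<0$ (the relative analogue of Remark \ref{divisorial}, where we may take $\mathcal{Z}$ to be $\Q$-factorial) forces $E\neq Z_s$, so $E$ cannot be contained in fibres and must dominate $\Delta$, whence $E\cap Z_s$ is a surface contracted by $(\Phi_R)_{|Z_s}$. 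In both cases $F_s\neq\{0\}$.

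Given fibrewise nontriviality, the conclusion follows formally. For each extremal ray $R$ of $\NE(\mathcal{Z}/\Delta)$ we obtain $R\subseteq(i_s)_*\NE(Z_s)$, so the polyhedral cone $(i_s)_*\NE(Z_s)$ contains all extremal rays of $\NE(\mathcal{Z}/\Delta)$, hence their convex hull, giving $(i_s)_*\NE(Z_s)=\NE(\mathcal{Z}/\Delta)$; as $(i_s)_*$ is a linear isomorphism this is a bijection of cones, identifying the extremal rays of $\NE(Z_s)$ with the $F_s$. Applying the same supporting-divisor computation to an arbitrary face $\tau$ in place of $R$ shows that $(\Phi_\tau)_{|Z_s}$ is the contraction of the face $(i_s)_*^{-1}(\tau)$ of $\NE(Z_s)$; since every contraction of the Fano variety $Z_s$ is the contraction of some face, this yields the stated correspondence. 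The hard part is precisely the fibrewise-nontriviality step: the fibre-type and divisorial cases above rest on $Z_s\equiv\pi^{*}(\mathrm{pt})$, but ruling out a relative flipping ray whose locus sits inside a single fibre (which would be invisible to the other fibre and break the bijection) is exactly where the deformation-invariance of the Picard group from \cite{smoothings}, together with the factoriality of the fibres, must be invoked.
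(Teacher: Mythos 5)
Your reduction of the lemma to fibrewise nontriviality of relative extremal contractions is sound, and your treatment of two of the three cases is essentially correct: the fibre-type case, and the divisorial case, where the exceptional prime divisor $E$ of $\Phi_R$ is $3$-dimensional and irreducible, so $E\cdot R<0$ together with $Z_s\cdot R=0$ forces $E\neq Z_s$, hence $E$ dominates $\Delta$ and cuts each fibre in a contracted surface. But the proposal has a genuine gap, and you name it yourself in your last sentence: nothing in your argument rules out a \emph{small} relative extremal ray $\alpha$ of $\NE(\mathcal{Z}/\Delta)$ whose locus is contained in the central fibre $Z_0$. This is not a routine leftover case; it is where the entire difficulty of the lemma sits. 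Such a ray is perfectly plausible a priori: a surface $E\subset Z_0$ contracted to a point by an elementary contraction of $Z_0$ of type $(2,0)$ gives a ray whose contraction $\ph\colon\mathcal{Z}\to\mathcal{W}$ is small on the fourfold $\mathcal{Z}$, is an isomorphism outside $Z_0$, and is invisible on $Z_t$ --- consistent with everything you establish. Your closing remark that ``deformation-invariance of the Picard group together with factoriality of the fibres must be invoked'' points at ingredients but is not an argument; those two facts alone do not produce a contradiction.

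What the paper actually does in this case requires two classification theorems you never invoke. After shrinking $\Delta$, every extremal ray has locus either dominating $\Delta$ or contained in $Z_0$. In the second case the restriction $\ph_0\colon Z_0\to W_0$ is birational (the general fibre of $\mathcal{W}\to\Delta$ is $3$-dimensional), so $\Exc(\ph)=\Exc(\ph_0)$ has dimension at most $2$ and $\ph$ is a small contraction of type $(2,0)$ of the total space, which is factorial by \cite[Proposition 1.1]{smoothings}; hence $\mathcal{W}$ is not factorial at $\ph(E)$. Injectivity of $(i_0)_*$ makes $\ph_0$ an elementary type $(2,0)$ contraction of $Z_0$ with $E$ irreducible, and $W_0$ is a terminal Fano threefold. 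If $K_{W_0}$ were Cartier, then $\mathcal{W}$, being a smoothing of $W_0$, would be factorial by \cite[Proposition 1.1]{smoothings} again --- a contradiction; so $W_0$ is not Gorenstein, and Cutkosky's classification \cite[Theorem 5]{cut} leaves only the case $E\cong\pr^2$ with $\mathcal{N}_{E/Z_0}\cong\ol_{\pr^2}(-2)$. Finally $E\subset(Z_0)_{reg}\subseteq\mathcal{Z}_{reg}$, so Kawamata's theorem on small contractions of smooth fourfolds \cite[Theorem 2.1]{kawsmall} gives $\mathcal{N}_{E/\mathcal{Z}}\cong\ol_{\pr^2}(-1)^{\oplus 2}$, which is incompatible with the normal bundle sequence $0\to\mathcal{N}_{E/Z_0}\to\mathcal{N}_{E/\mathcal{Z}}\to\ol_E\to 0$, since $\ol_{\pr^2}(-1)^{\oplus 2}$ admits no surjection onto $\ol_{\pr^2}$. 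Without this chain (or an equivalent one) your proof does not close; the rest of your write-up matches the paper's strategy.
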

\noindent 
By Lemma \ref{conodiMori}, the conic bundle on $Y_t$ induces a 
 contraction $\mathcal{Y}\to\mathcal{T}\to\Delta$.
This restricts to a contraction $\pi\colon
Y_4\to T$ onto a surface, which contradicts Claim \ref{final}.
This concludes the proof of Theorem \ref{result}.
\end{proof}
\begin{proof}[Proof of Lemma \ref{canonical}]
Let
$\psi\colon Z'\to Z$ be a partial crepant resolution such that $Z'$ has
terminal and factorial singularities (see \cite[\S 6.3]{KollarMori}). Hence
 $K_{Z'}=\psi^*(K_Z)$,  $-K_{Z'}$ is
nef and big, and $\rho_{Z'}>1$ because $\psi$ is not an isomorphism.

Since $Z$ is $\Q$-factorial, for every non terminal point $p\in Z$ the
inverse image $\psi^{-1}(p)$ has pure dimension $2$, therefore
$\Exc(\psi)$ is a divisor. Moreover
any irreducible
curve contained in  $\Exc(\psi)$ has anticanonical degree zero.

There is at least one elementary Mori contraction $f\colon Z'\to
W$. Observe that $f$ must be finite on $\Exc(\psi)$, because any
curve contracted by $f$ has positive anticanonical degree.
Hence any fiber $F$ of $f$ such that $F\cap\Exc(\psi)\neq\emptyset$
has dimension at most $1$.

Suppose that $f$ is of fiber type. Then it must be of type (3,2) and
\cite[Theorem 7]{cut} 
says that $W$ is a smooth 
surface and $f$ is a conic bundle. Moreover if $E\subseteq\Exc(\psi)$ is
an irreducible component, then $f(E)=W$, so that every fiber of $f$
intersects $E$. Then the fibers of $f$ give a covering family of
curves of anticanonical degree $\leq 2$ in $Z$, all passing through
the singular point $\psi(E)$, and we get~$(ii)$.

Suppose that $f$ is birational. Since $Z'$ is Gorenstein, $f$ is
divisorial; set $D:=\Exc(f)$. We claim that $D$ can not be disjoint
from $\Exc(\psi)$. In fact if so, $\psi(D)$ would be a non nef Cartier
divisor in $Z$, which is impossible because $\rho_Z=1$. 

Hence $f$ must be of type (2,1) and by \cite[Theorem 4]{cut} we have
$-K_{Z'}\cdot l=1$
for the
general fiber $l$ of $f$. Again if $E$ is an irreducible component of
$\Exc(\psi)$ intersecting $D$, every fiber $l$ of $f$ must intersect $E$.
So we get a one-dimensional family of
curves of anticanonical degree $1$ in $Z$, passing through the
singular point $\psi(E)$. 
\end{proof}
\begin{proof}[Proof of Lemma \ref{conodiMori}]
By \cite[Proposition 1.1]{smoothings} $\mathcal{Z}$ has at most
isolated terminal factorial singularities at the singular points of
$Z_0$.

We refer to \cite[\S 0-1]{KMM} for the notation in the relative
situation. Observe that for a projective morphism in the analytic
category the standard results of MMP hold, see \cite[Example
2.17]{KollarMori} and references therein. In particular since each
fiber of $\mathcal{Z}\to\Delta$ is Fano, $\NE(\mathcal{Z}/\Delta)$ is
closed and polyhedral by the relative version of the Cone Theorem.

We first observe that the linear maps
$$(i_t)_*\colon\N(Z_t)\la\N(\mathcal{Z}/\Delta)\quad
\text{and}\quad (i_0)_*\colon\N(Z_0)\la\N(\mathcal{Z}/\Delta)$$
are isomorphisms. In fact they are dual to the restrictions
$$\Pic(\mathcal{Z})\otimes{\R}\la\Pic(Z_t)\otimes{\R}\quad\text{and}
\quad \Pic(\mathcal{Z})\otimes{\R}\la\Pic(Z_0)\otimes{\R},$$
which are isomorphism by \cite[Theorem 1.4]{smoothings}. Moreover we
 have $(i_t)_*\NE(Z_t)\subseteq \NE(\mathcal{Z}/\Delta)$ and
$(i_0)_* \NE(Z_0)\subseteq \NE(\mathcal{Z}/\Delta)$.

Up to shrinking $\Delta$, we can
assume that for every extremal ray $\alpha$ of
$\NE(\mathcal{Z}/\Delta)$ either $\Lo(\alpha)$ dominates $\Delta$, or
$\Lo(\alpha)$ is contained in $Z_0$. Let's show that the second case
can not happen.

Assume by contradiction that $\Lo(\alpha)$ is contained in $Z_0$
and consider the contraction $\ph$ of $\alpha$:
$$\xymatrix{ {\mathcal{Z}} \ar[d]\ar[r]^{\ph} & {\mathcal{W}}\ar[dl] \\
{\Delta} & }
$$
Then $\ph$ is an isomorphism outside the central fiber, and restricts
to a contraction $\ph_0\colon Z_0\to W_0$ on the central
fiber. The morphism $\mathcal{W}\to\Delta$ is projective and the
general fiber has dimension $3$, hence $\dim W_0=3$ and $\ph_0$ is
birational.

This means that $E=\Exc(\ph)=\Exc(\ph_0)$ has dimension at most $2$,
namely $\ph$ is a small contraction of $\mathcal{Z}$ and it must be of
type (2,0). Hence
$\mathcal{W}$
is not factorial at $\ph(E)$.

Since $(i_0)_*$ is injective, $\ph_0$
is an elementary contraction of type (2,0) of $Z_0$, and
$E$ is irreducible. Moreover $W_0$ has terminal singularities and is
Fano.

If $W_0$ were Gorenstein, $\mathcal{W}$ should be factorial again by
\cite[Proposition 1.1]{smoothings}, which is not the case: so
$K_{W_0}$ is not Cartier. The possibilities for $\ph_0$ are given in
\cite[Theorem 5]{cut}, and the only case where 
$K_{W_0}$ is not Cartier is $E\cong\pr^2$ with normal bundle
$\mathcal{N}_{E/Z_0}\cong\mathcal{O}_{\pr^2}(-2)$. 

Since $E$ is a smooth prime divisor in the factorial variety $Z_0$, it is
contained in $(Z_0)_{reg}$ and hence in $\mathcal{Z}_{reg}$. Now
\cite[Theorem 2.1]{kawsmall} yields
$\mathcal{N}_{E/\mathcal{Z}}\cong\mathcal{O}_{\pr^2}(-1)^{\oplus 2}$,
a contradiction. 

Hence $\Lo(\alpha)$ dominates $\Delta$, which means that $\alpha$ is
contained in both cones $(i_t)_*\NE(Z_t)$
and $(i_0)_* \NE(Z_0)$. Repeating this for every extremal ray of 
$\NE(\mathcal{Z}/\Delta)$, we get the statement.
\end{proof}
\section{Applications and examples} 
\label{ea}
In this section we prove the Corollaries stated in the introduction,
and some other applications. We also give some related examples.

Suppose that
 $X$ is a smooth Fano variety with  a quasi
elementary contraction $f\colon X\to Y$. If $X$ has
 other suitable contractions, one
can use Theorems \ref{face} and \ref{face2} to get a quasi elementary
contraction $h\colon X\to Z$ with $\dim Z\leq 3$, and then apply
Theorem~\ref{result}. Corollaries \ref{appl}, \ref{cor1}, and
\ref{cor2} are obtained in this way.
\begin{proof}[Proof of Corollary \ref{appl}]
Recall that $\rho_X-\rho_{Y_i}\leq\rho_{F_i}$ because $f_i$ is quasi
elementary. 
Suppose that $\dim F_1+\dim F_2\geq n-2$.
By Theorem \ref{face2}
$(i)$ and Remark \ref{dim}, 
there exists a contraction $h\colon X\to Z$ where
$\dim Z\leq n-(\dim F_1+\dim
F_2)$ and
$$\rho_X-\rho_Z=(\rho_X-\rho_{Y_1})+(\rho_X-\rho_{Y_2})
\leq\rho_{F_1}+\rho_{F_2}.$$
This immediately gives $\dim F_1+\dim F_2\leq n$, and if equality
holds $Z$ is a point so $\rho_Z=0$.
Moreover $\dim F_1+\dim F_2=n-1$ implies $Z\cong\pr^1$ and
$\rho_Z=1$. 

Let's notice that $h$ is quasi elementary too. In fact let $G$ be a 
general fiber of $h$, then $G$ contains general fibers of $f_1$ and $f_2$.
Since both $f_i$'s are quasi elementary, $\N(G,X)$ contains both 
$\ker (f_1)_*$ and $\ker(f_2)_*$. On the other hand we have
$$\N(G,X)\subseteq\ker h_*=\ker(f_1)_*+\ker(f_2)_*$$
as shown in the proof of Theorem \ref{face2}, so $\N(G,X)=\ker h_*$
and $h$ is quasi elementary.

Hence if $\dim F_1+\dim F_2=n-2$ then $Z$ is a Del Pezzo surface by
Theorem~\ref{result} $(i)$, so that $\rho_Z\leq 9$.

Finally suppose that $\dim F_1+\dim F_2=n-3$  and
that $f_2$ is elementary. Similarly to the previous case
Theorem \ref{face} $(i)$ gives a quasi
elementary contraction $h\colon X\to Z$ with $Z\leq 3$ and $\rho_X\leq
\rho_{F_1}+\rho_Z+1$. 
If $\dim Z\leq 2$ we proceed as before. If $\dim Z=3$ then $\rho_Z\leq
10$ by Theorem \ref{result} $(ii)$, so we are done.
\end{proof}
\begin{remark}\label{elem}
In the statement of Corollary \ref{appl} one can replace $\rho_{F_i}$
by $\dim\ker(f_i)_*$, which gives a better bound for instance when
$f_i$ is elementary. Similarly in the following corollaries.
\end{remark}
\begin{corollary}
\label{cor1}
Let $X$ be a smooth Fano variety and $f\colon X\to Y$
a quasi elementary contraction of fiber type
with $\dim Y\geq 3$ and general
fiber $F$.

Let $\ph\colon X\to W$ be an elementary contraction such
that $\NE(\ph)\cap\NE(f)=\{0\}$.
Then every fiber of $\ph$ has dimension at most $\dim Y$, moreover:
\begin{enumerate}[$\bullet$]
\item
if $\ph$ has a fiber of dimension $\dim Y$, then $\rho_X\leq 1+\rho_F$ and
$\rho_Y=1$;
\item
if $\ph$ has a fiber of dimension $\dim Y-1$, then $\rho_X\leq 3+\rho_F$ and
$\rho_Y\leq 3$.
\end{enumerate}
\end{corollary}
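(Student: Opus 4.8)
The plan is to use the hypothesis $\NE(\ph)\cap\NE(f)=\{0\}$ to make $f$ finite on fibers of $\ph$, and then to push forward along $f_*$ the one-dimensionality of $\N(\,\cdot\,,X)$ that holds for fibers of the \emph{elementary} contraction $\ph$.

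First I would check that $f$ is finite on every fiber of $\ph$. If $C$ is an irreducible curve contained in a fiber of $\ph$, then $C$ is contracted by $\ph$, so $[C]\in\NE(\ph)$; were $C$ also contracted by $f$ we would have $0\neq[C]\in\NE(f)\cap\NE(\ph)$, against the hypothesis. Hence no curve in a fiber of $\ph$ is contracted by $f$, so $f_{|G}$ is finite for every fiber $G$, and $\dim G=\dim f(G)\leq\dim Y$. This already gives the first assertion.

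The key remark is that, $\ph$ being elementary, $\NE(\ph)$ is a single extremal ray, so every curve inside a fiber component $G_0$ of $\ph$ has class on this ray; thus $\dim\N(G_0,X)=1$ as soon as $\dim G_0\geq 1$. I would then treat the two cases. If $\ph$ has a fiber component $G_0$ with $\dim G_0=\dim Y$, then $f_{|G_0}$ is finite and surjective, so $f(G_0)=Y$ and $f_*$ carries $\N(G_0,X)$ onto $\N(Y)$; therefore $\rho_Y\leq\dim\N(G_0,X)=1$, forcing $\rho_Y=1$ (as $\dim Y\geq 3$). Since $f$ is quasi elementary, $\rho_X-\rho_Y=\dim\N(F,X)\leq\rho_F$, and $\rho_X\leq 1+\rho_F$ follows. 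If instead $\dim G_0=\dim Y-1$, then $D:=f(G_0)$ is a prime divisor in $Y$ and the same finite-surjective pushforward gives $\N(D,Y)=f_*(\N(G_0,X))$, so $\dim\N(D,Y)\leq 1$, i.e. $\dim\N(D,Y)=1$. Now Proposition \ref{divisor}, which applies precisely because $\dim Y\geq 3$ and $f$ is quasi elementary of fiber type, yields $\rho_Y\leq 3$, whence $\rho_X\leq\rho_Y+\rho_F\leq 3+\rho_F$.

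The only step needing care is the surjectivity of the pushforward $f_*\colon\N(G_0,X)\to\N(D,Y)$ (respectively onto $\N(Y)$): this comes from the commutative relation $f\circ i_{G_0}=i_D\circ f_{|G_0}$ together with the fact that a finite surjective morphism induces a surjection on $\N(\,\cdot\,)$, since every curve in the target is, up to a positive rational multiple, the image of a component of its preimage. Granting this elementary point, the corollary is a direct combination of finiteness on fibers, the elementarity of $\ph$, and Proposition \ref{divisor}; I do not expect a genuine obstacle.
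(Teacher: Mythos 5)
Your proposal is correct and follows essentially the same route as the paper's proof: finiteness of $f$ on fibers of $\ph$ (from $\NE(\ph)\cap\NE(f)=\{0\}$), the observation that an irreducible component $F_0$ of a fiber of the elementary contraction $\ph$ satisfies $\dim\N(F_0,X)=1$, pushing forward by $f_*$ to bound $\rho_Y$ or $\dim\N(f(F_0),Y)$, and then Proposition~\ref{divisor} together with $\rho_X\leq\rho_Y+\rho_F$. The details you spell out (surjectivity of $\N(\,\cdot\,)$ under finite surjective morphisms) are exactly what the paper leaves implicit.
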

\noindent We state the Corollary in this form for completeness, however  
let us notice that only the last statement is really new.
\begin{proof}
Recall that $\rho_X\leq\rho_Y+\rho_F$ because $f$ is quasi elementary.
Since $f$ is finite on fibers
of $\ph$, they have dimension at most $\dim Y$. If there is a fiber with
the same dimension as $Y$, let $F_0$ be an irreducible component with
$\dim F_0=\dim Y$. Then $f(F_0)=Y$, so
$$(f_*)_{|\N(F_0,X)}\colon\N(F_0,X)\la\N(Y)$$
is surjective and $\rho_Y\leq\dim \N(F_0,X)=1$. 

If $\ph$ has a fiber of
dimension $\dim Y-1$, let $F_0$ be an irreducible component with
$\dim F_0=\dim Y-1$. Then $f(F_0)$ is a prime divisor in $Y$ with 
$\dim\N(f(F_0),Y)=1$, so Proposition \ref{divisor} implies that
$\rho_Y\leq 3$.
\end{proof}
\begin{corollary}
\label{cor2}
Let $X$ be a smooth Fano variety and $f\colon X\to Y$
a quasi elementary contraction of fiber type with $\dim Y\geq 3$ and general
fiber $F$.

Let $\ph\colon X\to W$ be a divisorial elementary contraction such
that $\NE(\ph)\cap\NE(f)=\{0\}$ and $f(\Exc(\ph))=Y$.
\begin{enumerate}[$\bullet$]
\item
If the general fiber of $\ph_{|\Exc(\ph)}$ has dimension $\dim Y-2$, then
$\rho_X\leq \rho_F+10$.
\item
If $f$ is elementary and the general fiber of $\ph_{|\Exc(\ph)}$ has dimension
$\dim Y-3$, then $\rho_X\leq 12$.
\end{enumerate} 
\end{corollary}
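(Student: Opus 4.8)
The plan is to manufacture, out of the pair $(f,\ph)$, a single quasi elementary contraction $h\colon X\to Z$ with $\dim Z\le 3$, and then read off the Picard number bound from Theorem~\ref{result}. The tools built for exactly this purpose are Theorems~\ref{face} and~\ref{face2}, whose hypotheses in case $(iii)$ match those assumed here.

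For the first statement, $f$ is quasi elementary of fiber type, $\ph$ is elementary and divisorial with $f(\Exc(\ph))=Y$, and the general fiber of $\ph_{|\Exc(\ph)}$ has dimension $k_\ph=\dim Y-2$. I would apply Theorem~\ref{face2}$(iii)$ to obtain the commutative diagram~\eqref{diag} with $\dim Z\le 2$; by Remark~\ref{dim} the induced contraction $Y\to Z$ is elementary of fiber type, so $\rho_Y=\rho_Z+1$. The diagonal $h\colon X\to Z$ of~\eqref{diag} equals the composite of the quasi elementary $f$ with the elementary fiber type contraction $Y\to Z$, hence is itself quasi elementary by Example~\ref{compo}. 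If $\dim Z=2$, Theorem~\ref{result}$(i)$ applied to $h$ gives $\rho_Z\le 9$, while $\rho_Z\le 1$ is clear if $\dim Z\le 1$; so $\rho_Z\le 9$ in all cases. Using $\rho_X\le\rho_Y+\rho_F$ (Definition~\ref{quasiel}) I then get
$$\rho_X\le\rho_Y+\rho_F=\rho_Z+1+\rho_F\le\rho_F+10.$$

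For the second statement $f$ is moreover elementary and $k_\ph=\dim Y-3$, which are exactly the hypotheses of Theorem~\ref{face}$(iii)$. Applying it gives diagram~\eqref{diag} with $\dim Z\le 3$, and by Remark~\ref{dim} the contraction $Y\to Z$ is again elementary of fiber type; thus $\rho_Y=\rho_Z+1$ and, since $f$ is elementary, $\rho_X=\rho_Z+2$. As before the diagonal $h\colon X\to Z$ is the composite of the elementary (hence quasi elementary) $f$ with an elementary fiber type contraction, so $h$ is quasi elementary by Example~\ref{compo}. Now Theorem~\ref{result}$(ii)$ bounds $\rho_Z\le 10$ when $\dim Z=3$, Theorem~\ref{result}$(i)$ gives $\rho_Z\le 9$ when $\dim Z=2$, and $\rho_Z\le 1$ when $\dim Z\le 1$; in every case $\rho_Z\le 10$, whence $\rho_X=\rho_Z+2\le 12$.

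The genuine difficulty does not lie in this bookkeeping but is entirely absorbed into Theorems~\ref{face} and~\ref{face2}. The main obstacle, handled there, is the construction of the induced covering family $V'$ of rational curves on $W$ together with the verification that its general equivalence class has codimension at most $3$ (respectively $2$): this is precisely where the numerical hypotheses on $k_\ph$ and the condition $f(\Exc(\ph))=Y$ are needed. Once those two theorems are granted, the only delicate point remaining is that the composite $h$ be quasi elementary, so that Theorem~\ref{result} applies, and this is supplied by Example~\ref{compo}.
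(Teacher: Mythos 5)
Your proposal is correct and follows essentially the same route as the paper, whose proof simply invokes Theorem~\ref{face2}~$(iii)$ with Theorem~\ref{result}~$(i)$ for the first bound and Theorem~\ref{face}~$(iii)$ with Theorem~\ref{result}~$(ii)$ for the second, exactly as you do; your bookkeeping via Remark~\ref{dim} and the quasi-elementarity of $h$ via Example~\ref{compo} (legitimate here precisely because $Y\to Z$ is elementary in case $(iii)$) matches the argument the paper carries out in the proof of Corollary~\ref{appl}, to which it refers.
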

\begin{proof}
The proof is very similar to that of Corollary \ref{appl}. We apply
Theorem \ref{face2} $(iii)$ and Theorem~\ref{result} $(i)$ in the first case,
Theorem \ref{face} $(iii)$ and Theorem \ref{result} $(ii)$ 
in the second case.  
\end{proof}
\begin{proof}[Proof of Corollary \ref{4fold}] 
The first two statements are a direct consequence of
Theorem \ref{result} $(i)$. For the last statement, suppose that 
$\rho_X\geq 7$ and that $f\colon X\to Y$ is an elementary
contraction with $\dim Y=3$. Then $\rho_Y\geq 6$, so Theorem
\ref{result} $(ii)$ says that $X\cong S\times S'$ where $S$, $S'$ are
Del Pezzo surfaces and $S'$ has an elementary contraction onto
$\pr^1$. Then $S'\cong\pr^1\times\pr^1$ or $S'\cong\mathbb{F}_1$.
\end{proof}
\begin{proof}[Proof of Corollary \ref{5fold}]
The first two statements follow from Corollary \ref{appl} and 
Theorem \ref{result}.
Let
$f\colon X\to Y$ be an elementary
contraction with $\dim Y=4$, and $\ph\colon X\to W$ another elementary
contraction. If $\ph$ has a fiber of dimension at least $3$, then
$\rho_X\leq 4$ by Corollary~\ref{cor1}. In particular this holds if
$\ph$ is of type (3,0), (4,0), or (4,1). Finally suppose that 
 $f(\Exc(\ph))=Y$. If $\ph$ is of fiber type,
we have $\rho_X\leq 12$ by the previous part.
If $\ph$ is birational, then it must be divisorial, 
so $\rho_X\leq 12$ by Corollary
\ref{cor2}.
\end{proof}
Finally we give an application in the spirit of \ref{div}.
\begin{corollary}
Let $X$ be a smooth Fano variety and $f\colon X\to Y$ a non trivial 
quasi elementary contraction 
of fiber type with general fiber $F$. 

Let $D\subset X$ be a prime divisor such that $\dim\N(D,X)=2$.
Then one of the following occurs.
\begin{enumerate}[$(i)$]
\item $\rho_X= 3$, $Y\simeq\pr^1$, and $D$ is a fiber of $f$;
\item $\rho_X\leq 4$, $\rho_Y\leq 3$, and $f$ is elementary;
\item $\rho_X\leq 10$, $X\cong F \times Y$,
$Y$ is a Del Pezzo surface, $\rho_F=1$,
  and $\dim f(D)=1$;
\item $\rho_X\leq 1+\rho_F$, $\rho_Y=1$, and $f(D)=Y$.
\end{enumerate}
\end{corollary}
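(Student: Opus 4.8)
The plan is to control the divisor $D$ through its image $f(D)$, exploiting the dichotomy in Lemma \ref{nonequilocus} $(i)$: since $D$ is prime, either $f(D)=Y$, or $f(D)$ is a prime \emph{divisor} $D'\subset Y$ with $D=f^*(D')$. The crucial point is that there is no intermediate codimension, which is what keeps the case analysis finite. Throughout I would write $e:=\dim\ker f_*=\rho_X-\rho_Y$ and decompose $\dim\N(D,X)=k+r$ via the linear map $(f_*)_{|\N(D,X)}$, where $k=\dim(\N(D,X)\cap\ker f_*)$ and $r=\dim f_*(\N(D,X))$. A preliminary observation I would record is that $f_*(\N(D,X))=\N(f(D),Y)$: indeed $f_{|D}\colon D\to f(D)$ is surjective, hence induces a surjection on $\N$, and $f_*\circ (i_D)_*=(i_{f(D)})_*\circ (f_{|D})_*$. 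Thus $r=\dim\N(f(D),Y)$.

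First I would treat the case $f(D)=Y$, where $r=\dim\N(Y)=\rho_Y$, so that $r\leq\dim\N(D,X)=2$ forces $\rho_Y\leq 2$. If $\rho_Y=1$ this is conclusion $(iv)$, with $\rho_X\leq\rho_Y+\rho_F=1+\rho_F$ since $f$ is quasi elementary. If $\rho_Y=2$ then $k=0$, so $D$ contains no curve contracted by $f$, i.e.\ $f_{|D}$ is finite; then $\dim Y=\dim D=n-1$, the general fibre is a curve, and $f$ is elementary by Example \ref{compo}, giving $\rho_X=\rho_Y+1=3$ and conclusion $(ii)$.

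The main case is $f(D)=D'$ a prime divisor, with $D=f^*(D')$, so $D=f^{-1}(D')$ set-theoretically. Since the non-equidimensional locus of $f$ has codimension $\geq 3$ in $Y$ (Lemma \ref{nonequilocus} $(ii)$) while $D'$ has codimension $1$, the general fibre $f^{-1}(y)$ over $y\in D'$ is a genuine fibre of dimension $\dim F$ contained in $D$; because $f$ is quasi elementary, $\N(f^{-1}(y),X)=\ker f_*$, whence $\ker f_*\subseteq\N(D,X)$ and $k=e$, so $e+r=2$. If $\dim Y=1$ then $Y\cong\pr^1$, $r=0$, $e=2$, $\rho_X=3$, and $D$ is a fibre: conclusion $(i)$. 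If $\dim Y\geq 2$ then $D'$ carries a curve, so $r\geq 1$, and together with $e\geq 1$ this forces $e=r=1$; thus $f$ is elementary and $\dim\N(D',Y)=1$. For $\dim Y\geq 3$, Proposition \ref{divisor} applied to $D'\subset Y$ gives $\rho_Y\leq 3$, hence $\rho_X=\rho_Y+1\leq 4$ and conclusion $(ii)$. For $\dim Y=2$, $Y$ is a Del Pezzo surface by Theorem \ref{result} $(i)$; if $\rho_Y\leq 2$ we land again in $(ii)$, while if $\rho_Y\geq 3$ Theorem \ref{result} $(i)$ yields $X\cong Y\times F$ with $f$ the projection, whence $\rho_F=\rho_X-\rho_Y=1$ and $\rho_X\leq 10$: conclusion $(iii)$.

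The step I expect to be most delicate is justifying $\ker f_*\subseteq\N(D,X)$ in the divisor case, i.e.\ that $D$ genuinely contains full fibres on which the quasi elementary hypothesis can be applied. This is where Lemma \ref{nonequilocus} $(i)$–$(ii)$ and the fibrewise identity $\N(F_0,X)=\ker f_*$ must be combined carefully, keeping track of the reduced structure of $D=f^*(D')$. The remaining work is bookkeeping: matching the quadruple $(e,r,\dim Y,\rho_Y)$ to the four listed alternatives and invoking Proposition \ref{divisor} and Theorem \ref{result} $(i)$ as black boxes.
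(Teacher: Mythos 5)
Your proposal is correct and follows essentially the same route as the paper's proof: the same dichotomy from Lemma \ref{nonequilocus} $(i)$ (either $f(D)=Y$ or $D=f^*(f(D))$, hence $D$ contains fibers and $\ker f_*\subseteq\N(D,X)$ via the fibrewise identity $\N(F_0,X)=\ker f_*$), followed by the same invocations of Proposition \ref{divisor}, Theorem \ref{result} $(i)$, and Example \ref{compo}. The differences (rank--nullity bookkeeping with $(e,r)$, splitting on $\dim Y$ rather than on $\rho_X-\rho_Y$, and the harmless but unnecessary appeal to equidimensionality) are purely organizational.
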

\begin{proof}
Suppose that $f(D)\subsetneq Y$. Then  Lemma \ref{nonequilocus} says
that $D=f^{-1}(f(D))$; in particular $D$ contains a fiber $F$, so
$$\ker f_*=\N(F,X)\subseteq\N(D,X)$$
and $\rho_X-\rho_Y\leq 2$. If $\rho_X-\rho_Y=2$, then $\N(D,X)=\ker
f_*$, so $D$ is a fiber and $Y\cong\pr^1$. Thus we are in $(i)$.

If $\rho_X-\rho_Y=1$, then $f$ is elementary and $D$ is not a fiber,
hence $\dim Y\geq 2$. If $\rho_Y\leq 3$, we are in $(ii)$. 

Suppose that $\rho_Y\geq 4$. We have $\dim\N(f(D),Y)=1$, so
Proposition \ref{divisor} yields that $\dim Y=2$. Thus Theorem
\ref{result} $(i)$
gives $(iii)$.

Assume now that $f(D)=Y$. Then
the restriction
$$(f_*)_{|\N(D,X)}\colon\N(D,X)\la\N(Y)$$
is surjective, so $\rho_Y\leq 2$. If equality holds, then
$\N(D,X)\cap\ker f_*=\{0\}$ and hence $\N(D,X)\cap\NE(f)=\{0\}$. This
  implies that $f_{|D}$ is finite, so $f$ has $1$-dimensional
  fibers. Then $f$ is elementary and $\rho_X=3$, so we are again in
  $(ii)$. 
Finally if $\rho_Y=1$ we get $(iv)$.
\end{proof}
\begin{example}[(Elementary contractions over surfaces)]
\label{firstex}
It is not difficult to write down examples of smooth
Fano varieties of dimension $n\geq 3$
which are not products,
 but have an elementary contraction of fiber type over
$\pr^2$, $\pr^1\times\pr^1$, or $\mathbb{F}_1$.
Thus the condition $\rho_Y\geq 3$ in the second part of
Theorem~\ref{result} $(i)$ is necessary. 

For instance one can consider the $\pr^{n-2}$-bundles: 
$$\pr_{\pr^2}(\mathcal{O}^{\oplus
  (n-2)}\oplus\mathcal{O}(1)),\quad 
\pr_{\pr^1\times\pr^1}(\mathcal{O}^{\oplus
  (n-2)}\oplus\mathcal{O}(1,1)), 
\quad\pr_{\mathbb{F}_1}(\mathcal{O}^{\oplus
  (n-2)}\oplus \mathcal{O}(l))),$$ 
where $l\subset \mathbb{F}_1$ is the proper transform of a general line in
$\pr^2$.

A different example is given by $\mathbb{F}_1\times\pr^1\times\pr^{n-3}$,
which has a quasi elementary contraction onto $\pr^1\times\pr^{1}$
with fiber $\pr^1\times\pr^{n-3}$. In this case the variety is a
product, but it is not the product of the fiber and the target of the
contraction. 
\end{example}
\begin{example}[(Elementary contractions over $3$-folds)]
Let $Y=\pr_{\pr^1\times\pr^1}(\mathcal{O}\oplus\mathcal{O}(1,1))$,
so that $Y$ is Fano with $\rho_Y=3$. Observe that $Y$ is the
divisorial resolution of a quadric $Q\subset\pr^4$ with an isolated
singularity. Let $L\in\Pic Y$ be the pull-back of
$\mathcal{O}_Q(1)$, and consider
$$X=\pr_{Y}(\mathcal{O}^{\oplus
  (n-3)}\oplus L).$$
Then $X$ is Fano with dimension $n$ and $\rho_X=4$, and it is not a product.
One can write down analogous examples with $\rho_Y=1,2$.

We do not know whether there are similar examples with
$\rho_Y=4,5$. Let us point out that smooth Fano $3$-folds $Y$ 
with $\rho_Y=5$ (respectively $\rho_Y=4$)
which are not products are given by just two families
(respectively~$12$), after
\cite{morimukai}. 
\end{example}
\begin{example}[(Conic bundles)]\label{conicbdl}
Let $X$ be a smooth Fano variety and $f\colon X\to Y$ an
equidimensional contraction with $\dim Y=n-1$.
Then $f$ is a conic bundle, and
it is quasi elementary if and only if it
is elementary.

If $f$ is a conic bundle, then $Y$ is smooth 
(see \cite[Theorem 3.1]{ando} and \cite[Proposition 4.1]{AW}) 
and if $\dim
X\leq 4$ then $Y$ is also Fano, by
 \cite[Corollary on p.\ 156]{wisn}. 
However this is not true in higher dimensions, see \cite[Example on
p.\ 156]{wisn}.  
\end{example}
\begin{example}[(Elementary contractions in dimension $4$)]
\label{none}
Let $X$ be a smooth Fano $4$-fold and consider an elementary
contraction
$f\colon X\to Y$  with $\dim Y=3$. 

If $f$ is equidimensional then $Y$ is smooth and Fano, see example
\ref{conicbdl}. 
However 
it is well known that $f$ can have isolated
$2$-dimensional fibers, which are  classified, see
\cite{AW,kachi}. 
In \cite[\S 11]{kachi} we  find several examples where $f$ is not
equidimensional and $Y\cong \pr^3$;
in particular in this case $X$ is not a product.
However we are not aware of similar examples with $Y$ singular.
\end{example}
\begin{example}[(A Fano $4$-fold with $\rho=6$ and only small
  elementary contractions)] \label{delpezzo} In the toric case,
smooth
Fano varieties are classified up to dimension $7$, the
cases of dimensions $5$, $6$ and $7$ 
 being quite recent  \cite{krnill,oebro}. They are a good
source of explicit examples.

After the classification in \cite{bat2} (see also \cite{sato}),
toric Fano 4-folds have Picard number at most $8$. The ones with
$\rho=7,8$ are just $S_3\times S_3$ and $S_3\times S_2$, $S_2$ and
$S_3$ being the blow-up of $\pr^2$ in two points and in three non
collinear points respectively.
Among the ones with $\rho=6$ there is a case 
with no (non trivial) quasi elementary contractions,
the toric Del Pezzo 4-fold $V$ (n.\ 118 in \cite{bat2}).

The Mori cone $\NE(V)$ has dimension $6$ but has $20$ extremal rays.
\emph{Every} elementary contraction is a small contraction with
exceptional locus a $\pr^2$ with normal bundle
$\ol_{\pr^2}(-1)^{\oplus 2}$. Every such exceptional $\pr^2$
intersects three others in a point. 

One can see that $V$ has a contraction of fiber type
$f\colon V\to Y$ with $\dim Y=3$ and $\rho_Y=1$, so $f$ is not quasi
elementary. There are $6$
two-dimensional fibers, which are unions of two exceptional $\pr^2$'s
intersecting in one point.  
Moreover $Y$
has $6$ isolated non $\Q$-factorial points in the images of these fibers. 

Up to our knowledge, among the known examples of Fano $4$-folds with no
(non trivial) quasi elementary contractions of fiber type, $V$ is the
one with largest Picard number.

This example has an analog $V_{n}$ in each even dimension $n=2m\geq
4$. This is a smooth toric Fano variety with $\rho_{V_n}=n+2$ and
$2\binom{n+1}{m}$ extremal rays. Every elementary contraction is a
small contraction with exceptional locus a $ \pr^m$ with normal bundle
$\ol_{\pr^m}(-1)^{\oplus m}$. The varieties $V_n$ are called toric Del Pezzo 
varieties and were introduced in \cite{VK}.
\end{example}
\small
\providecommand{\bysame}{\leavevmode\hbox to3em{\hrulefill}\thinspace}
\providecommand{\MR}{\relax\ifhmode\unskip\space\fi MR }
\providecommand{\MRhref}[2]{%
  \href{http://www.ams.org/mathscinet-getitem?mr=#1}{#2}
}
\providecommand{\href}[2]{#2}


\begin{thebibliography}{BCDD03}
\label{biblio}
\addcontentsline{toc}{subsection}{References}

\bibitem[ABW92]{ABW}
Marco Andreatta, Edoardo Ballico, and Jaros{\l}aw~A. Wi{\'s}niewski,
  \emph{Vector bundles and adjunction}, International Journal of Mathematics
  \textbf{3} (1992), 331--340.

\bibitem[ACO04]{occhettaGM}
Marco Andreatta, Elena Chierici, and Gianluca Occhetta, \emph{Generalized
  {M}ukai conjecture for special {F}ano varieties}, Central European Journal of
  Mathematics \textbf{2} (2004), 272--293.

\bibitem[And85]{ando}
Tetsuya Ando, \emph{On extremal rays of the higher dimensional varieties},
  Inventiones Mathematicae \textbf{81} (1985), 347--357.

\bibitem[AW98]{AW}
Marco Andreatta and Jaros{\l}aw~A. Wi{\'s}niewski, \emph{On contractions of
  smooth varieties}, Journal of Algebraic Geometry \textbf{7} (1998), 253--312.

\bibitem[Bat99]{bat2}
Victor~V. Batyrev, \emph{On the classification of toric {F}ano 4-folds},
  Journal of Mathematical Sciences (New York) \textbf{94} (1999), 1021--1050.

\bibitem[BCD07]{unsplit}
Laurent Bonavero, Cinzia Casagrande, and St{\'e}phane Druel, \emph{On covering
  and quasi-unsplit families of curves}, Journal of the European Mathematical
  Society \textbf{9} (2007), 45--57.

\bibitem[BCDD03]{mukai}
Laurent Bonavero, Cinzia Casagrande, Olivier Debarre, and St{\'e}phane Druel,
  \emph{Sur une conjecture de {M}ukai}, Commentarii Mathematici Helvetici
  \textbf{78} (2003), 601--626.

\bibitem[BCW02]{bonwisncamp}
Laurent Bonavero, Fr{\'e}d{\'e}ric Campana, and Jaros{\l}aw~A. Wi{\'s}niewski,
  \emph{Vari{\'e}t{\'e}s projectives complexes dont l'{\'e}clat{\'e}e en un
  point est de {F}ano}, Comptes Rendus de l'Acad{\'e}mie des Sciences
  \textbf{334} (2002), 463--468.

\bibitem[Cam81]{campana81}
Fr{\'e}d{\'e}ric Campana, \emph{Cor{\'e}duction alg{\'e}brique d'un espace
  analytique faiblement {K}{\"a}hl{\'e}rien compact}, Inventiones Mathematicae
  \textbf{63} (1981), 187--223.

\bibitem[Cam92]{campana}
\bysame, \emph{Connexit{\'e} rationelle des vari{\'e}t{\'e}s de {F}ano},
  Annales Scientifiques de l'{\'E}cole Normale Sup{\'e}rieure \textbf{25}
  (1992), 539--545.

\bibitem[Cam04]{campa}
\bysame, \emph{Orbifolds, special varieties and classification theory: an
  appendix}, Annales de l'Institut Fourier (Grenoble) \textbf{54} (2004),
  631--665.

\bibitem[Cas06]{vertices}
Cinzia Casagrande, \emph{The number of vertices of a {F}ano polytope}, Annales
  de l'Institut Fourier (Grenoble) \textbf{56} (2006), 121--130.

\bibitem[Cut88]{cut}
Steven Cutkosky, \emph{Elementary contractions of {G}orenstein threefolds},
  Mathematische Annalen \textbf{280} (1988), 521--525.

\bibitem[Deb01]{debarreUT}
Olivier Debarre, \emph{Higher-dimensional algebraic geometry}, Universitext,
  Springer-Verlag, 2001.

\bibitem[Deb03]{debarre}
\bysame, \emph{{F}ano varieties}, Higher Dimensional Varieties and Rational
  Points ({B}udapest, 2001), Bolyai Society Mathematical Studies, vol.~12,
  Springer-Verlag, 2003, pp.~93--132.

\bibitem[Ful93]{fulton}
William Fulton, \emph{Introduction to toric varieties}, Annals of Mathematics
  Studies, vol. 131, Princeton University Press, 1993.

\bibitem[FZ03]{flennerzaid}
Hubert Flenner and Mikhail Zaidenberg, \emph{Log-canonical forms and log
  canonical singularities}, Mathematische Nachrichten \textbf{254/255} (2003),
  107--125.

\bibitem[IP99]{fanoEMS}
Vasilii~A.\ Iskovskikh and Yuri~G. Prokhorov, \emph{Algebraic geometry {V} -
  {F}ano varieties}, Encyclopaedia of Mathematical Sciences, vol.~47,
  Springer-Verlag, 1999.

\bibitem[JR06]{smoothings}
Priska Jahnke and Ivo Radloff, \emph{Terminal {F}ano threefolds and their
  smoothings}, {preprint arXiv:math/0601769}, 2006.

\bibitem[Kac97]{kachi}
Yasuyuki Kachi, \emph{Extremal contractions from 4-dimensional manifolds to
  3-folds}, Annali della Scuola Normale Superiore di Pisa. Classe di Scienze
  (4) \textbf{24} (1997), 63--131.

\bibitem[Kaw89]{kawsmall}
Yujiro Kawamata, \emph{Small contractions of four dimensional algebraic
  manifolds}, Mathematische Annalen \textbf{284} (1989), 595--600.

\bibitem[KM98]{KollarMori}
J{\'a}nos Koll{\'a}r and Shigefumi Mori, \emph{Birational geometry of algebraic
  varieties}, Cambridge Tracts in Mathematics, vol. 134, Cambridge University
  Press, 1998.

\bibitem[KMM87]{KMM}
Yujiro Kawamata, Katsumi Matsuda, and Kenji Matsuki, \emph{Introduction to the
  minimal model problem}, Algebraic Geometry, Sendai, 1985, Advanced Studies in
  Pure Mathematics, vol.~10, 1987, pp.~283--360.

\bibitem[KMM92]{KoMiMo}
J{\'a}nos Koll{\'a}r, Yoichi Miyaoka, and Shigefumi Mori, \emph{Rational
  connectedness and boundedness of {F}ano manifolds}, Journal of Differential
  Geometry \textbf{36} (1992), 765--779.

\bibitem[KN07]{krnill}
Maximilian Kreuzer and Benjamin Nill, \emph{Classification of toric {F}ano
  $5$-folds}, {preprint arXiv:math/0702890}, 2007.

\bibitem[Kol86]{kollarhigher}
J{\'a}nos Koll{\'a}r, \emph{Higher direct images of dualizing sheaves {I}},
  Annals of Mathematics \textbf{123} (1986), 11--42.

\bibitem[Kol96]{kollar}
\bysame, \emph{Rational curves on algebraic varieties}, Ergebnisse der
  Mathematik und ihrer Grenzgebiete, vol.~32, Springer-Verlag, 1996.

\bibitem[Miy93]{miyaoka93}
Yoichi Miyaoka, \emph{Relative deformations of morphisms and applications to
  fibre spaces}, Commentarii Mathematici Universitatis Sancti Pauli \textbf{42}
  (1993), 1--7.

\bibitem[MM81]{morimukai}
Shigefumi Mori and Shigeru Mukai, \emph{Classification of {F}ano {$3$}-folds
  with $b_2\geq 2$}, Manuscripta Mathematica \textbf{36} (1981), 147--162.
  Erratum: {\bf 110} (2003), 407.

\bibitem[Nam97]{namikawa}
Yoshinori Namikawa, \emph{Smoothing {F}ano 3-folds}, Journal of Algebraic
  Geometry \textbf{6} (1997), 307--324.

\bibitem[{\O}br07]{oebro}
Mikkel {\O}bro, \emph{An algorithm for the classification of smooth {F}ano
  polytopes}, {preprint arXiv:0704.0049}, 2007.

\bibitem[Pro05]{prok}
Yuri~G.\ Prokhorov, \emph{On the degree of {F}ano threefolds with canonical
  {G}orenstein singularities}, Matematicheski{\u\i} Sbornik \textbf{196}
  (2005), 81--122, Russian. English translation: Sbornik: Mathematics {\bf 196}
  (2005), 77--114.

\bibitem[Sar82]{sarkisov}
V.~G. Sarkisov, \emph{On conic bundle structures}, Izvestiya Akademii Nauk SSSR
  Seriya Matematicheskaya \textbf{46} (1982), 371--408, Russian. English
  translation: Mathematics of the USSR Izvestiya {\bf 20} (1982), 355--390.

\bibitem[Sat00]{sato}
Hiroshi Sato, \emph{Toward the classification of higher-dimensional toric
  {F}ano varieties}, T{\^o}hoku Mathematical Journal \textbf{52} (2000),
  383--413.

\bibitem[Tsu06]{toru}
Toru Tsukioka, \emph{Classification of {F}ano manifolds containing a negative
  divisor isomorphic to projective space}, Geometriae Dedicata \textbf{123}
  (2006), 179--186.

\bibitem[VK84]{VK}
V.~E. Voskresenski{\u{\i}} and Alexander Klyachko, \emph{Toric {F}ano varieties
  and systems of roots}, Izvestiya Akademii Nauk SSSR Seriya Matematicheskaya
  \textbf{48} (1984), 237--263, Russian. English translation: Mathematics of
  the USSR Izvestiya {\bf 24} (1985), 221--244.

\bibitem[Voi02]{voisin}
Claire Voisin, \emph{Th{\'e}orie de {H}odge et g{\'e}om{\'e}trie alg{\'e}brique
  complexe}, Cours Sp{\'e}cialis{\'e}s, vol.~10, Soci{\'e}t{\'e}
  Math{\'e}matique de France, 2002.

\bibitem[Wat80]{watanabe}
Kimio Watanabe, \emph{On plurigenera of normal isolated singularities. {I}},
  Mathematische Annalen \textbf{250} (1980), 65--94.

\bibitem[Wi{\'s}91a]{wisn}
Jaros{\l}aw~A. Wi{\'s}niewski, \emph{On contractions of extremal rays of {F}ano
  manifolds}, Journal f{\"{u}}r die reine und angewandte Mathematik
  \textbf{417} (1991), 141--157.

\bibitem[Wi{\'s}91b]{wisndef}
\bysame, \emph{On deformation of nef values}, Duke Mathematical Journal
  \textbf{64} (1991), 325--332.

\end{thebibliography}
\end{document}